\UseRawInputEncoding
\documentclass[12pt]{amsart}
\usepackage{}

\usepackage{cancel}
\usepackage{amsmath}
\usepackage{amsfonts}
\usepackage{amssymb}
\usepackage[all]{xy}           %xypic macro for latex2.09

\usepackage{mathrsfs}
\usepackage{bbding}
\usepackage{txfonts}
\usepackage{amscd}

\usepackage[shortlabels]{enumitem}
\usepackage{ifpdf}
\ifpdf
  \usepackage[colorlinks,final,backref=page,hyperindex]{hyperref}
\else
  \usepackage[colorlinks,final,backref=page,hyperindex,hypertex]{hyperref}
\fi
\usepackage{tikz}
\usepackage[active]{srcltx}

\makeatletter

\newtheorem{thm}{Theorem}[section]
\newtheorem{lem}[thm]{Lemma}

\newtheorem{pro}[thm]{Proposition}
\newtheorem{ex}[thm]{Example}
\newtheorem{rmk}[thm]{Remark}
\newtheorem{defi}[thm]{Definition}
\newtheorem{que}[thm]{Question}

\newcommand {\emptycomment}[1]{}

\newcommand{\lon }{\,\rightarrow\,}
\newcommand{\be }{\begin{equation}}
\newcommand{\ee }{\end{equation}}

\newcommand{\g}{\mathfrak g}
\newcommand{\h}{\mathfrak h}

\newcommand{\huaL}{\mathcal{L}}

\newcommand{\huaW}{\mathcal{W}}

\newcommand{\huaD}{\mathcal{D}}

\newcommand{\huaK}{\mathcal{K}}

\newcommand{\frkk}{\mathfrak k}

\newcommand{\frkz}{\mathfrak z}

\newcommand{\dM}{\mathrm{d}}

\newcommand{\Id}{{\rm{Id}}}

\newcommand{\br}[1]{   [ \cdot,    \cdot  ]   }

\newcommand{\Hom}{\mathrm{Hom}}

\newcommand{\LD}{{\rm LieDer }}
\newcommand{\LtoD}{{\rm Lie2Der }}
\newcommand{\Der}{\mathrm{Der}}
\newcommand{\NR}{\mathrm{NR}}

\newcommand{\Out}{\mathrm{Out}}

\newcommand{\gl}{\mathfrak {gl}}

\newcommand{\ad}{\mathrm{ad}}

\begin{document}

\title{Non-abelian Extensions of Lie algebras with derivations}

\author{Jun Jiang}
\address{Department of Mathematics, Jilin University, Changchun 130012, Jilin, China}
\email{junjiang@jlu.edu.cn}

\author{Kanghe Xu}
\address{Department of Mathematics, Jilin University, Changchun 130012, Jilin, China}
\email{xukh24@mails.jlu.edu.cn}

%\date{\today}

\begin{abstract}
In this paper, we investigate non-abelian extensions of Lie algebras with derivations from several different perspectives. We show that the theory of non-abelian extensions of a Lie algebra with a derivation can be characterized by means of the second non-abelian cohomology, the Deligne groupoid, the homotopy category of strict Lie $2$-algebras with strict derivations, and the notion of a $(\mathfrak{g}, D)$-kernel, respectively. Moreover, within this unified framework, we address the following existence problem: given a non-abelian extension of Lie algebras
$$
0\longrightarrow\mathfrak{h}\overset{i}\longrightarrow\hat{\mathfrak{g}}\overset{p}\longrightarrow\mathfrak{g}\longrightarrow 0,
$$
let $(K,D)\in\mathrm{Der}(\mathfrak{h})\times\mathrm{Der}(\mathfrak{g})$ be a pair of derivations of $\mathfrak{h}$ and $\mathfrak{g}$ respectively. When does there exist a derivation $\hat{D}$ of $\hat{\mathfrak{g}}$ such that $\hat{D}|_{\mathfrak{h}}=K$ and  $D\circ p=p\circ\hat{D}.$ We provide an obstruction class for the existence of such a lift.
\end{abstract}

\subjclass[2020]{17B40, 17B55, 17B56}

\keywords{Non-abelian extension, $(\g, D)$-kernel, Deligne groupoid, Lie $2$-algebra with a derivation, extensible of a derivation.}

\maketitle

\tableofcontents

\allowdisplaybreaks

%\end{document}

\section{Introduction}
Samuel Eilenberg and Saunders MacLane \cite{EM} introduced the notion of non-abelian cohomology of an abstract
group with values in another group to understand non-abelian extensions of abstract groups (see also \cite{HS, Ne1}). Subsequently, non-abelian extensions and the non-abelian cohomology theory were generalized to the context of Lie algebras \cite{Alekseevsky1, Alekseevsky2, Fi, Fr, IK, Ne}. However, unlike classical cohomology theories, neither the non-abelian cohomology of groups nor Lie algebras arises from cochain complexes. Karl-Hermann Neeb \cite{Ne} introduced the notion of a $\g$-kernel(see also \cite{HN, Ho, MM}), to some extent, they used cohomology groups to classify the non-abelian extensions of Lie algebras. This generalizes the classification of abelian extensions of a Lie algebra by a module in terms of the second cohomology space. In \cite{Fr, Nr}, they utilized the differential graded Lie algebra approach to investigate non-abelian extensions of Lie algebras anew. Specifically, they constructed a differential graded Lie algebra and used its Maurer-Cartan elements to character non-abelian extensions of Lie algebras. Moreover, the set of connected components of the Deligne groupoid of the differential graded Lie algebra corresponding to the non-abelian cohomology of Lie algebras. Another classical approach is to construct a Lie 2-algebra from derivations of Lie algebras
and to characterize non-abelian extensions of Lie algebras in terms of Lie 2-algebra homomorphisms \cite{Alekseevsky1, Alekseevsky2, IK, SZ}.

In \cite{TFS}, the authors developed a cohomology theory for Lie algebras with derivations (also termed \LD pairs) and utilized the second cohomology group to classify the central extensions of these structures. Among their results, they also studied the following question:
\begin{itemize}
    \item
Given a central extension of Lie algebras
\[\begin{CD}
0@>>>\h@>i>>\hat{\g}@>p>>\g            @>>>0,
\end{CD}\]
let $(\varphi_\h,\varphi_\g)\in\Der(\h)\times\Der(\g)$ be a pair of derivations of $\h$ and $\g$ respectively. When does there exist a derivation $\varphi_{\hat{\g}}$ of $\hat{\g}$ such that $$\varphi_{\hat{\g}}|_\h=\varphi_\h, \quad \text{and}\quad  \varphi_\g\circ p=p\circ\varphi_{\hat{\g}}.$$ 
\end{itemize}
These results of \LD pairs and the facts of non-abelian extensions of Lie algebras motivate us to generalize their study to the non-abelian extensions of \LD pairs, and to investigate the following question:
\begin{itemize}
    \item
Given a non-abelian extension of Lie algebras
\[\begin{CD}
0@>>>\h@>i>>\hat{\g}@>p>>\g            @>>>0,
\end{CD}\]
let $(K,D)\in\Der(\h)\times\Der(\g)$ be a pair of derivations of $\h$ and $\g$ respectively. When does there exist a derivation $\hat{D}$ of $\hat{\g}$ such that $$\hat{D}|_\h=K, \quad \text{and}\quad  D\circ p=p\circ\hat{D}.$$ 
\end{itemize}

In this paper, we solve the above problem by using non-abelian extensions of \LD pairs. We also investigate non-abelian extensions of \LD pairs through four different approaches. In Section $2$, we introduce the non-abelian cohomology for \LD pairs and apply it to classify non-abelian extensions of \LD pairs. In Section $3$, we construct a differential graded Lie algebra, whose Maurer-Cartan elements are in bijection with the non-abelian $2$-cocycles. Furthermore, the connected components of the Deligne groupoid are used to characterize non-abelian extensions of \LD pairs. In Section $4$, we investigate the relationship between non-abelian extensions of \LD pairs and the homotopy category of strict Lie $2$-algebras with strict derivations. More precisely, we show that the hHom set between strict Lie 2-algebras with strict derivations is isomorphic to the non-abelian cohomology of \LD pairs. We also prove that $(\g,D)\mapsto{\rm{\bf Ext_{nab}}}(\g,D;\h,K)$ is a representable functor in a suitable category. In Section $5$, we introduce the concept of a $(\g, D)$-kernel and prove that two equivalent non-abelian extensions of \LD pairs give rise to the same $(\g, D)$-kernel. We also obtain cohomology groups of \LD pairs via a $(\g, D)$-kernel, then use the second cohomology group to describe the set of equivalent non-abelian extensions of \LD pairs. We use the third cohomology group to study when a $(\g, D)$-kernel comes from a non-abelian extension of \LD pairs. In Section $6$, we end this paper by characterizing the extensibility of derivations of Lie algebras within the established framework. We also give the obstruction class for the extensibility of derivations of Lie algebras. 

Throughout the paper, all vector spaces are over $\mathbb{C}$ and finite-dimensional.

\section{Non-abelian extensions of {\rm LieDer} pairs and the second non-abelian cohomology}
 In this section, we introduce the non-abelian cohomology for {\rm LieDer} pairs and establish a classification of non-abelian extensions in terms of this cohomology.

Let $(\g, [\cdot, \cdot]_\g)$ be a Lie algebra. A derivation of $\g$ is a linear map $D:\g\lon\g$ that satisfies 
$$
D([x, y]_\g)=[D(x), y]_\g+[x, D(y)]_\g, \quad \forall x, y\in\g.
$$
Denote by $\mathrm{Der}(\g)$ the set of derivations of the Lie algebra $(\g, [\cdot, \cdot]_\g)$.
\begin{defi}{\rm(\cite{TFS})}
${\bf (1)}$ A {\bf LieDer} pair is a Lie algebra $(\g, [\cdot, \cdot]_\g)$ with a derivation $D\in\mathrm{Der}(\g)$, 
denoted by $(\g, D)$.

${\bf (2)}$ Let $(\g, D)$ and $(\g',D')$ be two {\rm LieDer} pairs. A {\bf homomorphism} from $(\g,D)$ to $(\g',D')$ is a Lie algebra homomorphism $\psi:\g\rightarrow\g'$ such that
$
D'\circ\psi=\psi\circ D.
$
\end{defi}

\begin{defi}
Let $(\g, D)$ and $(\h, K)$ be two {\rm LieDer} pairs. A {\bf non-abelian $2$-cocycle} of $(\g, D)$ with values in $(\h, K)$ is a triple $(\varrho, \omega, \chi)$ of linear maps $\varrho: \g\lon\Der(\h),~\omega:\wedge^2\g\lon\h$ and $\chi:\g\lon\h$ such that for all $x, y, z\in\g$ and $u, v\in\h$ the following equations hold:
\begin{eqnarray}
\label{nc1}\varrho([x, y]_\g)u&=&\varrho(x)\varrho(y)u-\varrho(y)\varrho(x)u-[\omega(x, y), u]_\h,\\
\label{nc2}\varrho(x)\omega(y, z)+\varrho(y)\omega(z, x)&+&\varrho(z)\omega(x, y)-\omega([x, y]_\g, z)-\omega([y, z]_\g, x)-\omega([z, x]_\g, y)=0,\\
\label{nc3}K(\varrho(x)v)&=&\varrho(D(x))v+\varrho(x)K(v)+[\chi(x), v]_\h,\\
\label{nc4}K(\omega(x, y))+\chi([x, y]_\g)&=&\varrho(x)\chi(y)-\varrho(y)\chi(x)+\omega(D(x), y)+\omega(x, D(y)).
\end{eqnarray}
Denote by $Z^2_{nab}(\g, D;\h, K)$ the set of non-abelian $2$-cocycles.
\end{defi}

\begin{defi}
Two non-abelian $2$-cocycles $(\varrho, \omega, \chi)$ and $(\varrho', \omega', \chi')$ are {\bf equivalent}, if there exists a linear map $\tau:\g\lon\h$ such that for all $x, y\in\g$ and $u\in\h$ the following equations hold:
\begin{eqnarray}
\label{eq11}\varrho(x)u-\varrho'(x)u&=&[\tau(x), u]_\h,\\
\label{eq22}\omega(x, y)-\omega'(x, y)&=&\varrho'(x)\tau(y)-\varrho'(y)\tau(x)+[\tau(x), \tau(y)]_\h-\tau([x, y]_\g),\\
\label{eq33}\chi(x)-\chi'(x)&=&K(\tau(x))-\tau(D(x)).
\end{eqnarray}
The {\bf second non-abelian cohomology} $H^2_{nab}(\g, D;\h, K)$ is defined as the quotient of $Z^2_{nab}(\g, D;\h, K)$ by the above equivalence relation. For any $(\varrho, \omega, \chi)\in Z^2_{nab}(\g, D;\h, K)$, we denote by $[(\varrho, \omega, \chi)]$ its equivalence class of $(\varrho, \omega, \chi)$ in $H^2_{nab}(\g, D; \h, K)$.
\end{defi}

\begin{defi}\label{nab}
Let $(\g,D)$ and $(\h,K)$ be two {\rm LieDer} pairs.

${\bf (1)}$  A {\bf non-abelian extension} of $(\g,D)$ by $(\h,K)$ is a {\rm LieDer} pair $(\hat{\g}, \hat{D})$ such that the following diagram commutes:
\[\begin{CD}
0@>>>\h@>i>>\hat{\g}@>p>>\g            @>>>0\\
@.    @V K VV   @V\hat{D}VV  @V D VV    @.\\
0@>>>\h @>i>>\hat{\g}@>p>>\g             @>>>0.
\end{CD}\]
Denote by ${\rm{\bf ext_{nab}}}(\g, D; \h, K)$ the set of non-abelian extensions of $(\g, D)$ by $(\h, K)$.

${\bf (2)}$ Let $(\hat{\g} ,\hat{D})$ and $(\tilde{\g}, \tilde{D})$ be two non-abelian extensions of the {\rm LieDer} pair $(\g, D)$ by $(\h, K)$. They are said to be {\bf isomorphic} if there exists an isomorphism $\kappa:\tilde{\g}\lon\hat{\g}$ of {\rm LieDer} pairs such that the following diagram commutes:
  \begin{equation*}
\xymatrix@!0{0\ar@{->} [rr]&& \h \ar@{->} [rr] \ar'[d] [dd] \ar@{=} [rd] && \tilde{\g}\ar'[d] [dd]\ar@ {->} [rr] \ar@{->} [rd]^{\kappa}&& \g\ar@{=} [rd]\ar'[d] [dd]\ar@{->} [rr]&&0&\\
&0\ar@{->} [rr]&& \h\ar@{->} [rr]\ar@{->} [dd]&&\hat{\g}\ar@{->} [dd]\ar@{->} [rr]&&\g\ar@ {->} [dd]\ar@{->} [rr]&&0\\
0\ar@{->} [rr]&&\h\ar'[r] [rr] \ar@{=} [rd]&&\tilde{\g}\ar@{->} [rd]^{\kappa}\ar'[r] [rr]&&\g\ar@{=} [rd] \ar'[r] [rr]&&0&\\
&0\ar@{->} [rr]&& \h\ar@{->} [rr]&&\hat{\g}\ar@{->} [rr]&&\g\ar@{->} [rr]&&0.}
\end{equation*}
Denote by ${\rm{\bf Ext_{nab}}}(\g, D; \h, K)$ the set of all equivalence classes of non-abelian extensions of $(\g, D)$ by $(\h, K)$.

${\bf (3)}$ A {\bf section} of an extension $(\hat{\g}, \hat{D})$ of the {\rm LieDer} pair $(\g, D)$ by $(\h, K)$ is a linear map $ s:\g\rightarrow\hat{\g}$ such that
\begin{equation*}
p\circ s=\Id_\g.
\end{equation*}
\end{defi}
\begin{pro}\label{sexnoon}
Let $(\hat{\g}, \hat{D})$ be a non-abelian extension of $(\g, D)$ by $(\h, K)$ and let $s:\g\lon\hat{\g}$ be a section of $(\hat{\g}, \hat{D})$. Define maps $\varrho:\g\rightarrow\Der(\h), ~\omega\in\Hom(\wedge^{2}\g,\h)$ and $\chi\in\Hom(\g,\h)$ by
\begin{eqnarray}\label{111}
\left\{\begin{array}{rcl}
~~\varrho(x)u&=&[s(x),u]_{\hat{\g}}, \\
~~\omega(x,y)&=&[s(x),s(y)]_{\hat{\g}}-s([x,y]_\g), \\
~~\chi(x)&=&\hat{D}(s(x))-s(D(x)), \quad \forall x, y\in\g, u\in\h,
\end{array}\right.
\end{eqnarray}
then $(\varrho, \omega, \chi)\in Z^{2}_{nab}(\g,D;\h, K)$. Moreover, let $s':\g\lon\hat{\g}$ be another section with the corresponding maps $\varrho', \omega'$ and $\chi'$. Then $(\varrho, \omega, \chi)$ and $(\varrho', \omega', \chi')$ are equivalent.
\end{pro}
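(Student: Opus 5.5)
First we check that the three maps land where they should. Identify $\h$ with $i(\h)\subset\hat{\g}$. Since $p$ is a Lie algebra homomorphism and $p\circ s=\Id_\g$, we get $p(\omega(x,y))=[x,y]_\g-[x,y]_\g=0$, so $\omega$ takes values in $\h$. Likewise $p(\hat{D}s(x)-sD(x))=Dps(x)-D(x)=0$ by $D\circ p=p\circ\hat{D}$, so $\chi$ takes values in $\h$. Because $\h$ is an ideal, $\varrho(x)=[s(x),\cdot]_{\hat{\g}}|_\h$ maps $\h$ to $\h$, and the Jacobi identity in $\hat{\g}$ shows it is a derivation of $\h$.

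Next come the four cocycle identities. Equations \eqref{nc1} and \eqref{nc2} are the classical non-abelian Lie algebra conditions, and they come directly from the Jacobi identity in $\hat{\g}$.
\begin{itemize}
\item For \eqref{nc1}, substitute $s([x,y]_\g)=[s(x),s(y)]_{\hat{\g}}-\omega(x,y)$ into $\varrho([x,y]_\g)u=[s([x,y]_\g),u]_{\hat{\g}}$ and apply Jacobi to $[[s(x),s(y)],u]$.
\item For \eqref{nc2}, expand the cyclic sum of $[s(x),[s(y),s(z)]]$, which vanishes. Replace each $[s(y),s(z)]$ by $\omega(y,z)+s([y,z])$, and then each $[s(x),s([y,z])]$ by $\omega(x,[y,z])+s([x,[y,z]])$. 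The terms $s(\cdot)$ cancel by Jacobi in $\g$, and what remains is exactly \eqref{nc2}, using the antisymmetry of $\omega$.
\end{itemize}

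The new identities \eqref{nc3} and \eqref{nc4} use that $\hat{D}$ is a derivation with $\hat{D}|_\h=K$.
\begin{itemize}
\item For \eqref{nc3}, compute $K(\varrho(x)v)=\hat{D}[s(x),v]=[\hat{D}s(x),v]+[s(x),Kv]$. Then write $\hat{D}s(x)=s(D(x))+\chi(x)$; this yields $\varrho(D(x))v+[\chi(x),v]_\h+\varrho(x)K(v)$.
\item For \eqref{nc4}, apply $\hat{D}$ to $\omega(x,y)=[s(x),s(y)]-s([x,y])$. This gives $[\hat{D}s(x),s(y)]+[s(x),\hat{D}s(y)]-\hat{D}s([x,y])$. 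Substitute $\hat{D}s(z)=s(D(z))+\chi(z)$ throughout. The pieces $[sD(x),s(y)]$ and $[s(x),sD(y)]$ become $\omega(D(x),y)+s([Dx,y])$ and $\omega(x,D(y))+s([x,Dy])$, and the pieces $[\chi(x),s(y)]$, $[s(x),\chi(y)]$ become $-\varrho(y)\chi(x)$ and $\varrho(x)\chi(y)$. Finally $-\hat{D}s([x,y])=-sD([x,y])-\chi([x,y])$, and the $s$-terms cancel because $D$ is a derivation of $\g$.
\end{itemize}

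For independence of the section, set $\tau=s-s'$. Since $p\circ\tau=0$, $\tau$ takes values in $\h$.
\begin{itemize}
\item \eqref{eq11} is immediate.
\item \eqref{eq22} follows by expanding $[s'(x)+\tau(x),s'(y)+\tau(y)]-s'([x,y])-\tau([x,y])$.
\item \eqref{eq33} follows from $\chi(x)-\chi'(x)=\hat{D}\tau(x)-\tau(D(x))=K(\tau(x))-\tau(D(x))$.
\end{itemize}

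None of these steps is conceptually hard. The only real bookkeeping is \eqref{nc4}, where one must track all the $s$-valued terms and see that they cancel via the derivation property of $D$; I expect that to be the main place where sign errors could creep in.
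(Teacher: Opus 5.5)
Your proof is correct and follows essentially the same route as the paper. The paper transports the structure to $\g\oplus\h$ via $S(x,u)=s(x)+u$ and reads off \eqref{nc1}--\eqref{nc4} from the Jacobi identity and from $D_\chi$ being a derivation; this is the same computation you carry out directly in $\hat{\g}$. The independence of the section is proved identically, with $\tau=s-s'$. You also explicitly check that $\omega$ and $\chi$ take values in $\h$ and that $\varrho(x)\in\Der(\h)$, which the paper leaves implicit.
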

\begin{proof}
Define $S:\g\oplus \h\lon\hat{\g}$ by
$$
S(x, u)=s(x)+u.
$$
One can use the linear isomorphism $S$ to transfer to $\g\oplus\h$ the {\rm LieDer} pair structure of $\hat{\g}$. In this way we obtain the {\rm LieDer} pair structure $(\g\oplus\h, [\cdot,\cdot]_{\varrho,\omega}, D_{\chi})$, where $[\cdot,\cdot]_{\varrho,\omega}$ and $D_{\chi}$ are given by
\begin{eqnarray*}
[(x, u), (y, v)]_{\varrho,\omega}&=&S^{-1}[S(x, u), S(y, v)]_{\hat{\g}}=\Big([x, y]_\g, \varrho(x)v-\varrho(y)u+[u, v]_\h+\omega(x,y)\Big),\\
D_{\chi}(x, u)&=&S^{-1}\hat{D}S(x, u)=(D(x), K(u)+\chi(x)).
\end{eqnarray*}
By direct calculation, it follows from the fact that $(\g\oplus\h, [\cdot, \cdot]_{\varrho,\omega})$ is a Lie algebra that the following equations hold:
\begin{equation*}\label{eqrep1}
\varrho([x, y]_\g)u=\varrho(x)\varrho(y)u-\varrho(y)\varrho(x)u-[\omega(x, y), u]_\h,\quad \forall x\in\g,~u\in\h,
\end{equation*}
and
\begin{equation*}\label{eqrep2}
\varrho(x)\omega(y, z)+\varrho(y)\omega(z, x)+\varrho(z)\omega(x, y)-\omega([x, y]_\g, z)-\omega([y, z]_\g, x)-\omega([z, x]_\g, y)=0,
\end{equation*}
for all $x, y, z\in\g$.

Moreover, since $D_{\chi}$ is a derivation of $(\g\oplus\h, [\cdot, \cdot]_{\varrho,\omega})$, we obtain that
\begin{equation*}\label{eqrep3}
K(\varrho(x)v)=\varrho(D(x))v+\varrho(x)K(v)+[\chi(x), v]_\h,
\end{equation*}
and
\begin{eqnarray*}\label{eqrep4}
K(\omega(x, y))+\chi([x, y]_\g)
&=&\varrho(x)\chi(y)-\varrho(y)\chi(x)+\omega(D(x), y)+\omega(x, D(y)).
\end{eqnarray*}
From the above four equations, it follows that $(\varrho, \omega, \chi)$ is a non-abelian $2$-cocycle.

Define a linear map $\tau:\g\lon\h$ by
$$
\tau(x)=s(x)-s'(x), \quad \forall x\in\g.
$$
Then $\forall x, y\in\g$ and $\forall u\in\h$ we have that
\begin{eqnarray*}
\varrho(x)u-\varrho'(x)u&=&[s(x), u]_{\hat{\g}}-[s'(x), u]_{\hat{\g}}\\
&=&[\tau(x), u]_{\h},\\
\omega(x, y)-\omega'(x, y)&=&[s(x), s(y)]_{\hat{\g}}-s[x, y]_{\g}-[s'(x), s'(y)]_{\hat{\g}}+s'[x, y]_{\hat{\g}}\\
&=&[\tau(x), s'(y)]_{\hat{\g}}-\tau([x, y]_\g)+[s'(x), \tau(y)]_{\hat{\g}}+[\tau(x), \tau(y)]_\h\\
&=&\varrho'(x)\tau(y)-\varrho'(y)\tau(x)+[\tau(x), \tau(y)]_\h-\tau([x, y]_\g),
\end{eqnarray*}
and
\begin{eqnarray*}
\chi(x)-\chi'(x)&=&\hat{D}(s(x))-s(D(x))-\hat{D}(s'(x))+s'(D(x))\\
&=&\hat{D}(\tau(x))-\tau(D(x)).
\end{eqnarray*}
Thus, non-abelian $2$-cocycles $(\varrho, \omega, \chi)$ and $(\varrho', \omega', \chi')$ are equivalent.
\end{proof}
Let $(\hat{\g}, \hat{D})$ be a non-abelian extension of $(\g, D)$ by $(\h, K)$. Choose a section $s:\g\lon\hat{\g}$. Then $[(\varrho, \omega, \chi)]\in H^2_{nab}(\g,D;\h, K)$ defined by \eqref{111} is independent on the choice of the section according to Proposition \ref{sexnoon}. Thus we can define a map $\Delta:{\rm{\bf ext_{nab}}}(\g, D; \h, K)\lon H^2_{nab}(\g,D;\h,K)$ by
\begin{equation}\label{3333}
\Delta(\hat{\g}, \hat{D})=[(\varrho, \omega, \chi)]\in H^2_{nab}(\g, \h), \quad \forall (\hat{\g}, \hat{D})\in{\rm{\bf ext_{nab}}}(\g, D; \h, K),
\end{equation}
where $\varrho, \omega$ and $\chi$ are given by \eqref{111}.
\begin{pro}\label{pro11}
Let $(\g, D)$ and $(\h, K)$ be {\rm LieDer} pairs. Then the following map is well-defined,
$$
\overline{\Delta}: {\rm{\bf Ext_{nab}}}(\g, D; \h, K)\lon H^2_{nab}(\g,D;\h, K),\quad \overline{\Delta}([(\hat{\g}, \hat{D})])=\Delta(\hat{\g}, \hat{D}), \quad \forall~~ [(\hat{\g}, \hat{D})]\in{\rm{\bf Ext_{nab}}}(\g, D; \h, K).
$$
\end{pro}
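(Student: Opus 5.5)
To prove that $\overline{\Delta}$ is well-defined, it suffices to show that isomorphic non-abelian extensions have the same image under $\Delta$. Proposition \ref{sexnoon} already shows that $\Delta(\hat{\g},\hat{D})$ does not depend on the choice of section. So let $(\tilde{\g},\tilde{D})$ and $(\hat{\g},\hat{D})$ be isomorphic extensions via an isomorphism $\kappa:\tilde{\g}\lon\hat{\g}$ of LieDer pairs. By the commutative diagram in Definition \ref{nab}, $\kappa$ satisfies $\kappa|_\h=\Id_\h$, $\hat{p}\circ\kappa=\tilde{p}$ and $\hat{D}\circ\kappa=\kappa\circ\tilde{D}$. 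Fix a section $\tilde{s}:\g\lon\tilde{\g}$, and let $(\tilde{\varrho},\tilde{\omega},\tilde{\chi})$ be the cocycle given by \eqref{111}.

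The plan is to transport the section along $\kappa$: set $\hat{s}=\kappa\circ\tilde{s}$. This is a section of $(\hat{\g},\hat{D})$, because $\hat{p}\circ\kappa\circ\tilde{s}=\tilde{p}\circ\tilde{s}=\Id_\g$. Let $(\hat{\varrho},\hat{\omega},\hat{\chi})$ be the cocycle obtained from $\hat{s}$ via \eqref{111}. We then check that the two triples coincide exactly, using that $\kappa$ is a Lie algebra homomorphism fixing $\h$ pointwise.

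For $\varrho$ we have
$$
\hat{\varrho}(x)u=[\kappa\tilde{s}(x),u]_{\hat{\g}}=[\kappa\tilde{s}(x),\kappa u]_{\hat{\g}}=\kappa[\tilde{s}(x),u]_{\tilde{\g}}=[\tilde{s}(x),u]_{\tilde{\g}}=\tilde{\varrho}(x)u,
$$
where the last two equalities use that $[\tilde{s}(x),u]_{\tilde{\g}}\in\h$ and $\kappa|_\h=\Id_\h$.

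For $\omega$, in the same way,
$$
\hat{\omega}(x,y)=\kappa\big([\tilde{s}(x),\tilde{s}(y)]_{\tilde{\g}}-\tilde{s}([x,y]_\g)\big)=\tilde{\omega}(x,y),
$$
since $\tilde{\omega}(x,y)\in\h$.

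For $\chi$, compatibility of $\kappa$ with the derivations gives
$$
\hat{\chi}(x)=\hat{D}\kappa\tilde{s}(x)-\kappa\tilde{s}(D(x))=\kappa\big(\tilde{D}\tilde{s}(x)-\tilde{s}(D(x))\big)=\tilde{\chi}(x),
$$
again because $\tilde{\chi}(x)\in\h$.

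Hence $\Delta(\hat{\g},\hat{D})=\Delta(\tilde{\g},\tilde{D})$, and combined with the section-independence from Proposition \ref{sexnoon}, $\overline{\Delta}$ is well-defined.

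There is no real obstacle here. The only points needing care are that each of the three expressions lies in $\h$, so that $\kappa$ acts on it as the identity, and that $\kappa\circ\tilde{s}$ is a genuine section.
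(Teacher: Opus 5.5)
Your proposal is correct and follows the paper's proof: the paper also takes a section $s$ of $\tilde{\g}$, uses $\kappa\circ s$ as a section of $\hat{\g}$, and checks that the two cocycles coincide exactly, using $\kappa|_{\h}=\Id_{\h}$ and the compatibility of $\kappa$ with brackets and derivations. Your explicit appeal to Proposition \ref{sexnoon} for section-independence is the same fact the paper relies on implicitly when defining $\Delta$.
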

\begin{proof}
Let $(\hat{\g}, \hat{D})$ and $(\tilde{\g}, \tilde{D})$ be two isomorphic non-abelian extensions of $(\g, D)$ by $(\h, K)$. Then there exists an isomorphism $\kappa:\tilde{\g}\lon\hat{\g}$ of {\rm LieDer} pairs such that the following diagram commutes:
  \begin{equation*}
\xymatrix@!0{0\ar@{->} [rr]&& \h \ar@{->} [rr] \ar'[d] [dd] \ar@{=} [rd] && \tilde{\g}\ar'[d] [dd]\ar@ {->} [rr] \ar@{->} [rd]^{\kappa}&& \g\ar@{=} [rd]\ar'[d] [dd]\ar@{->} [rr]&&0&\\
&0\ar@{->} [rr]&& \h\ar@{->} [rr]\ar@{->} [dd]&&\hat{\g}\ar@{->} [dd]\ar@{->} [rr]&&\g\ar@ {->} [dd]\ar@{->} [rr]&&0\\
0\ar@{->} [rr]&&\h\ar'[r] [rr] \ar@{=} [rd]&&\tilde{\g}\ar@{->} [rd]^{\kappa}\ar'[r] [rr]&&\g\ar@{=} [rd] \ar'[r] [rr]&&0&\\
&0\ar@{->} [rr]&& \h\ar@{->} [rr]&&\hat{\g}\ar@{->} [rr]&&\g\ar@{->} [rr]&&0.}
\end{equation*}
Assume that $s:\g\lon\tilde{\g}$ is a section, and $(\tilde{\varrho}, \tilde{\omega}, \tilde{\chi})$ is the corresponding non-abelian $2$-cocycle. Define $s':\g\lon\hat{\g}$ by
 $s'=\kappa\circ s$, then $s'$ is also a section. We denote by $(\hat{\varrho}, \hat{\omega}, \hat{\chi})$ the corresponding non-abelian $2$-cocycle. Since $\kappa|_{\h}=\Id_{\h}$, we have that
\begin{eqnarray*}
\tilde{\varrho}(x)u-\hat{\varrho}(x)u&=&[s(x), u]_{\tilde{\g}}-[\kappa(s(x)),u]_{\hat{\g}}=[s(x), u]_{\tilde{\g}}-\kappa([s(x),u]_{\tilde{\g}})\\
&=&0,\\
\tilde{\omega}(x, y)-\hat{\omega}(x, y)%&=&[s(x), s(y)]_{\tilde{\g}}-s[x, y]_{\g}-[\kappa(s(x)), \kappa(s(y))]_{\hat{\g}}+\kappa(s[x, y]_{\g})\\
&=&[s(x), s(y)]_{\tilde{\g}}-s[x, y]_{\g}-\kappa([s(x), s(y)]_{\hat{\g}}-s[x, y]_{\g})\\
&=&0,
\end{eqnarray*}
and
\begin{eqnarray*}
\tilde{\chi}(x)-\hat{\chi}(x)%&=&\tilde{D}(s(x))-s(D(x))-\hat{D}(\kappa(s(x)))+\kappa(s(D(x)))\\
&=&\tilde{D}(s(x))-s(D(x)-\kappa(\tilde{D}(s(x))-s(D(x)))\\
&=&0,
\end{eqnarray*}
which means that $(\tilde{\varrho}, \tilde{\omega}, \tilde{\chi})$ and $(\hat{\varrho}, \hat{\omega}, \hat{\chi})$ are equivalent. Thus the map
$$
\overline{\Delta}: {\rm{\bf Ext_{nab}}}(\g, D; \h, K)\lon H^2_{nab}(\g,D;\h, K),\quad \overline{\Delta}([(\hat{\g}, \hat{D})])=\Delta(\hat{\g}, \hat{D}), \quad \forall~~ [(\hat{\g}, \hat{D})]\in{\rm{\bf Ext_{nab}}}(\g, D; \h, K),
$$
is well-defined.
\end{proof}
\begin{thm}\label{corres}
Let $(\g, D)$ and $(\h, K)$ be two {\rm LieDer} pairs. Then the non-abelian extensions of $(\g, D)$ by $(\h, K)$ are classified by the second non-abelian cohomology $H_{nab}^2
(\g, D; \h, K)$.
\end{thm}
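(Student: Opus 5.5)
We will show that the map $\overline{\Delta}:{\rm{\bf Ext_{nab}}}(\g, D; \h, K)\lon H^2_{nab}(\g,D;\h,K)$ of Proposition \ref{pro11} is a bijection. Proposition \ref{pro11} already gives that it is well-defined. It remains to construct an inverse map, or equivalently to prove surjectivity and injectivity.

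\emph{Surjectivity.} Take a cocycle $(\varrho,\omega,\chi)\in Z^2_{nab}(\g,D;\h,K)$. On $\g\oplus\h$ define
\[
[(x,u),(y,v)]_{\varrho,\omega}=\big([x,y]_\g,\ \varrho(x)v-\varrho(y)u+[u,v]_\h+\omega(x,y)\big),
\qquad
D_\chi(x,u)=(D(x),K(u)+\chi(x)).
\]
These are exactly the formulas appearing in the proof of Proposition \ref{sexnoon}. The verification there can be run in reverse:
\begin{itemize}
\item the Jacobi identity for $[\cdot,\cdot]_{\varrho,\omega}$ splits by the number of $\h$-arguments;
\item the case with three $\h$-arguments is the Jacobi identity of $\h$, and the case with two uses $\varrho(x)\in\Der(\h)$;
\item the case with one $\h$-argument is \eqref{nc1}, and the case with none is \eqref{nc2}.
\end{itemize}
Similarly, $D_\chi$ being a derivation splits into:
\begin{itemize}
\item two $\h$-arguments, which holds since $K\in\Der(\h)$;
\item one $\h$-argument, which is \eqref{nc3};
\item no $\h$-arguments, which is \eqref{nc4}.
\end{itemize}
The inclusion $u\mapsto(0,u)$ and the projection $(x,u)\mapsto x$ then intertwine $K$, $D_\chi$ and $D$. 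So $(\g\oplus\h,D_\chi)$ is a non-abelian extension. With the section $s(x)=(x,0)$, formulas \eqref{111} return exactly $(\varrho,\omega,\chi)$.

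\emph{Injectivity.} Suppose $(\tilde\g,\tilde D)$ and $(\hat\g,\hat D)$ give equivalent cocycles. Choose sections, and by the transfer via $S$ in Proposition \ref{sexnoon} identify each extension with the model $(\g\oplus\h,[\cdot,\cdot]_{\varrho,\omega},D_\chi)$, respectively $(\g\oplus\h,[\cdot,\cdot]_{\varrho',\omega'},D_{\chi'})$. This identification is an isomorphism of extensions. Let $\tau$ satisfy \eqref{eq11}--\eqref{eq33}, and define
\[
\kappa(x,u)=(x,u+\tau(x)).
\]
The map $\kappa$ is the identity on $\h$ and commutes with the projections, so it is a linear isomorphism compatible with the diagram. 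We then check two things:
\begin{itemize}
\item $\kappa$ is a Lie homomorphism from $[\cdot,\cdot]_{\varrho,\omega}$ to $[\cdot,\cdot]_{\varrho',\omega'}$. This follows from \eqref{eq11} and \eqref{eq22}, provided the direction of $\tau$ is chosen consistently; the signs may force the use of $-\tau$ or $\kappa^{-1}$.
\item $\kappa\circ D_\chi=D_{\chi'}\circ\kappa$, which is precisely \eqref{eq33}.
\end{itemize}
Hence the two extensions are isomorphic.

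The only real work is the direct computation in the surjectivity step. The main obstacle is the sign and ordering bookkeeping: the case of the Jacobi identity with one $\h$-argument must yield \eqref{nc1} exactly, and the derivation condition with no $\h$-arguments must yield \eqref{nc4}. The argument is routine but error-prone, so I would organize it by the number of $\h$-entries as above.
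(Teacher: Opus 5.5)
Your proof is correct and follows the paper's argument: the same model $(\g\oplus\h,[\cdot,\cdot]_{\varrho,\omega},D_\chi)$ and the same map $\kappa(x,u)=(x,u+\tau(x))$. Your surjectivity/injectivity split (the section $s(x)=(x,0)$, and the transfer via $S$) just spells out the identities $\overline{\Delta}\circ\Theta=\Id$ and $\Theta\circ\overline{\Delta}=\Id$ that the paper asserts without detail. Your sign hedge is unnecessary: with \eqref{eq11}--\eqref{eq33} exactly as stated, $\kappa$ is already a LieDer isomorphism from the $(\varrho,\omega,\chi)$-model to the $(\varrho',\omega',\chi')$-model, with no change of sign or direction needed.
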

\begin{proof}
By Proposition \ref{pro11}, there is a map
$$
\overline{\Delta}: {\rm{\bf Ext_{nab}}}(\g, D; \h, K)\lon H^2_{nab}(\g, D;\h, K),\quad \overline{\Delta}([(\hat{\g}, \hat{D})])=\Delta(\hat{\g}, \hat{D}), \quad \forall~~ [(\hat{\g}, \hat{D})]\in{\rm{\bf Ext_{nab}}}(\g, D; \h, K).
$$
%We only need to prove that $\overline{\Delta}$ is bijective.

Suppose that $(\varrho, \omega, \chi)\in Z^2_{nab}(\g,D;\h, K)$, define a bracket on $\g\oplus\h$ by
$$
[(x, u), (y, v)]_{\varrho, \omega}=([x, y]_\g, \varrho(x)v-\varrho(y)u+[u, v]_\h+\omega(x, y)), \quad\forall x, y\in\g, u, v\in\h.
$$
By \eqref{nc1} and \eqref{nc2}, we have that $(\g\oplus\h, [\cdot, \cdot]_{\varrho, \omega})$ is a Lie algebra. Define a linear map $D_\chi: \g\oplus\h\lon\g\oplus\h$ by
$$
D_\chi(x, u)=(D(x), K(u)+\chi(x)), \quad \forall x\in\g, \in\h.
$$
By \eqref{nc3} and \eqref{nc4}, it is straightforward to deduce that $D_\chi$ is a derivation on $(\g\oplus\h, [\cdot, \cdot]_{\varrho, \omega})$. If $(\varrho', \omega', \chi')$ and $(\varrho, \omega, \chi)$ are two equivalent non-abelian $2$-cocycles, then there is a linear map $\tau:\g\lon\h$ satisfying
\begin{eqnarray}\label{09}
\left\{\begin{array}{rcl}
\varrho(x)u-\varrho'(x)u&=&[\tau(x), u]_\h,\\
\omega(x, y)-\omega'(x, y)&=&\varrho'(x)\tau(y)-\varrho'(y)\tau(x)+[\tau(x), \tau(y)]_\h-\tau([x, y]_\g),\\
\chi(x)-\chi'(x)&=&K(\tau(x))-\tau(D(x))
\end{array}\right.
\end{eqnarray}
Denote by $(\g\oplus\h, [\cdot, \cdot]_{\varrho', \omega'}, D_{\chi'})$ the {\rm LieDer} pair corresponding to $(\varrho', \omega', \chi')$. Define a linear map $\kappa:\g\oplus\h\lon\g\oplus\h$ by
$$
\kappa(x, u)=(x, u+\tau(x)), \quad\forall x\in\g, u\in\h.
$$
By \eqref{09}, it is straightforward to deduce that $\kappa:(\g\oplus\h, [\cdot, \cdot]_{\varrho, \omega}, D_{\chi})\lon (\g\oplus\h, [\cdot, \cdot]_{\varrho', \omega'}, D_{\chi'})$ is an isomorphism of non-abelian extensions of $(\g, D)$ by $(\h, K)$. Thus, we have a map $\Theta: H_{nab}^2(\g,D; \h, K)\lon {\rm{\bf Ext_{nab}}}(\g, D; \h, K)$ defined by
$$
\Theta([(\varrho, \omega, \chi)])=[(\g\oplus\h, [\cdot, \cdot]_{\varrho, \omega}, D_\chi)], \quad  \forall [(\varrho, \omega, \chi)]\in H_{nab}^2(\g, D; \h, K),
$$
where $[(\g\oplus\h, [\cdot, \cdot]_{\varrho, \omega}, D_\chi)]$ is the equivalence class of $(\g\oplus\h, [\cdot, \cdot]_{\varrho, \omega}, D_\chi)$. Moreover, we have $$\Theta\circ\bar{\Delta}=\Id, \quad \bar{\Delta}\circ\Theta=\Id,$$ which means that the non-abelian extensions of $(\g, D)$ by $(\h, K)$ are classified by the second non-abelian cohomology $H_{nab}^2
(\g, D; \h, K)$.
\end{proof}

\section{Non-abelian extensions of \rm{LieDer} pairs and Deligne groupoids}
In this section,  we show that the theory of non-abelian extensions of \LD pairs can be understood in terms of
differential graded Lie algebras. More precisely, the equivalence classes of non-abelian extensions of a \LD pair $(\g, D)$ by a \LD pair $(\h, K)$ can be seen as the $\pi_0$ of the {\rm Deligne} groupoid of a differential graded Lie algebra $(\huaL^{\g, \h}=\oplus_{n\in\mathbb{Z}}\huaL^{\g, \h}_n, [\cdot, \cdot]_\huaL, d)$.
We first recall some definitions related to differential graded Lie algebras.

\begin{defi}{\rm(\cite{NR})}
${\bf (1)}$ A {\bf graded Lie algebra} is a $\mathbb{Z}$-graded vector space $\huaL=\oplus_{n\in\mathbb{Z}}\huaL_{n}$ equipped with a degree-preserving bilinear bracket $[\cdot,\cdot]_{\huaL}:\huaL\times\huaL\lon\huaL$ which satisfies
\begin{eqnarray*}
    [a,b]_{\huaL}&=&-(-1)^{|a||b|}[b,a]_{\huaL},\\
    ~~[a,[b,c]_{\huaL}]_{\huaL}&=&[[a,b]_{\huaL},c]_{\huaL}+(-1)^{|a||b|}[b,[a,c]_{\huaL}]_{\huaL}
\end{eqnarray*}
for all homogeneous elements $a,b,c\in \huaL$, with $|a|$ and $|b|$ as degree of $a$ and $b$ respectively. 

${\bf (2)}$ A {\bf differential graded Lie algebra} is a graded Lie algebra $(\huaL,[\cdot,\cdot]_{\huaL})$ equipped with a homological derivation $d:\huaL\lon\huaL$ of degree $1$, i.e.,
    \begin{eqnarray*}
        |d(a)|&=&|a|+1,\\
        d([a,b]_{\huaL})&=&[d(a),b]_{\huaL}+(-1)^{|a|}[a,d(b)]_{\huaL},\\
        d^{2}&=&0,
    \end{eqnarray*}
    for all homogeneous elements $a,b\in\huaL$.
\end{defi}
\begin{defi}{\rm(\cite{NR})}
A {\bf Maurer-Cartan element} of a differential graded Lie algebra $(\huaL,[\cdot,\cdot]_{\huaL},d)$ is an element $\mu\in \huaL_{1}$ satisfying the {\bf Maurer-Cartan equation}
\begin{equation}
    d\mu+\frac{1}{2}[\mu,\mu]_{\huaL}=0.
\end{equation}
Denote by $\mathrm{MC}(\huaL)$ the set of {\rm Maurer-Cartan} elements of $(\huaL,[\cdot,\cdot]_{\huaL},d)$.
\end{defi}
\begin{pro}\label{twistdg}{\rm (\cite{DSV, Get})}
If $\alpha$ is a Maurer-Cartan element of $(\huaL,[\cdot,\cdot]_{\huaL},d)$, then $(\huaL,[\cdot,\cdot]_{\huaL}, d+[\alpha, \cdot]_{\huaL})$ is a differential graded Lie algebra, called the twisted differential graded Lie algebra.
\end{pro}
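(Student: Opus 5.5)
We need to prove that $d_\alpha := d+[\alpha,\cdot]_{\huaL}$ is a degree-one derivation of the bracket and squares to zero. The graded Lie bracket itself is untouched, so only the two differential axioms need checking. In both steps the tools are the graded Jacobi identity, graded skew-symmetry, and the Maurer--Cartan equation $d\alpha+\frac12[\alpha,\alpha]_{\huaL}=0$ for $\alpha\in\huaL_1$.

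First, degree and the Leibniz rule. Since $|\alpha|=1$ and the bracket preserves degree, $[\alpha,\cdot]_{\huaL}$ raises degree by one, so $d_\alpha$ has degree $1$. For the Leibniz rule, $d$ already satisfies it, so it suffices to show that $\ad_\alpha=[\alpha,\cdot]_{\huaL}$ is a graded derivation of degree $1$. This is exactly the Jacobi identity as stated in the definition with $a=\alpha$:
$$[\alpha,[b,c]_{\huaL}]_{\huaL}=[[\alpha,b]_{\huaL},c]_{\huaL}+(-1)^{|b|}[b,[\alpha,c]_{\huaL}]_{\huaL}.$$
Adding this to the Leibniz rule for $d$ gives the Leibniz rule for $d_\alpha$.

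Second, $d_\alpha^2=0$, which is the only real computation. Expanding,
$$d_\alpha^2(b)=d^2b+d[\alpha,b]_{\huaL}+[\alpha,db]_{\huaL}+[\alpha,[\alpha,b]_{\huaL}]_{\huaL}.$$
The first term vanishes. By the Leibniz rule for $d$ with $|\alpha|=1$, we have $d[\alpha,b]_{\huaL}=[d\alpha,b]_{\huaL}-[\alpha,db]_{\huaL}$, so the two middle terms sum to $[d\alpha,b]_{\huaL}$. For the last term, Jacobi with $a=b=\alpha$ gives $[\alpha,[\alpha,b]]=[[\alpha,\alpha],b]-[\alpha,[\alpha,b]]$, hence $[\alpha,[\alpha,b]_{\huaL}]_{\huaL}=\frac12[[\alpha,\alpha]_{\huaL},b]_{\huaL}$; dividing by $2$ is fine over $\mathbb{C}$. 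Altogether
$$d_\alpha^2(b)=\Big[d\alpha+\tfrac12[\alpha,\alpha]_{\huaL},\,b\Big]_{\huaL}=0$$
by the Maurer--Cartan equation.

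The main care point is keeping the Koszul signs straight, namely the sign $(-1)^{|\alpha|}=-1$ in the Leibniz rule and the sign $(-1)^{|\alpha||\alpha|}=-1$ in Jacobi. Once these are tracked correctly, the argument is routine.
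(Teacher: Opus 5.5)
Your proof is correct, and all the signs come out right. The Jacobi identity with $a=\alpha$ gives the Leibniz rule for $[\alpha,\cdot]_{\huaL}$, and the computation $d_\alpha^2=[d\alpha+\frac12[\alpha,\alpha]_{\huaL},\cdot\,]_{\huaL}=0$ finishes it.

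The paper gives no proof of its own here; it only cites the standard references. Your argument is the standard one found there.
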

There is an equivalence relation $\sim$ on $\mathrm{MC}(\huaL)$ known as gauge equivalence. More precisely, two {\rm Maurer-Cartan} elements  $\mu$ and $\mu'$ of $(\huaL,[\cdot,\cdot]_{\huaL},d)$ are called {\bf equivalent}, if there exists an element $\tau\in \huaL_{0}$ and $\ad\tau$ is nilpotent, such that 
    \begin{equation*}
        \mu=e^{\ad\tau}\mu'+\frac{\Id-e^{\ad\tau}}{\ad\tau}(d\tau).
    \end{equation*}
The element $\tau$ is called to be a {\bf gauge transformation}.

\begin{defi}{\rm(\cite{Fr})}
    Let $(\huaL,[\cdot,\cdot]_{\huaL},d)$ be a differential graded Lie algebra with $\huaL_0$ abelian. The {\bf Deligne groupoid} $\mathrm{Del}(\huaL)$ associated to $(\huaL,[\cdot,\cdot]_{\huaL},d)$ is defined as follows: its set of objects is ${\rm MC}(\huaL)$, and for any $x, y\in {\rm MC}(\huaL)$, the set of morphisms $\Hom(x, y)$ is empty if $x$ and $y$ are not gauge equivalent, and is given by the gauge transformations between $x$ and $y$ otherwise. One defines its set of connected components $\pi_0$:
    $$
    \pi_0(\mathrm{Del}(\huaL)):={\rm MC}/\sim.
    $$
\end{defi}
Let $\g$ be a vector space. Define the graded vector space
$\oplus_{n=0}^\infty \Hom(\wedge^{n+1}\g,\g)$
with the degree of elements in $\Hom(\wedge^n\g,\g)$ being $n-1$. For $\alpha\in \Hom(\wedge^m\g,\g), \beta\in \Hom(\wedge^n\g,\g)$, the  Nijenhuis-Richardson bracket $[\cdot,\cdot]_\NR$ is defined by
$$ [\alpha,\beta]_{\NR}:=\alpha\circ_\NR\beta- (-1)^{(m-1)(n-1)}\beta\circ_{\NR}\alpha,$$
where $\alpha\circ_{\NR}\beta\in \Hom(\wedge^{m+n-1}\g,\g)$ is given by
\begin{equation}
(\alpha\circ_\NR\beta)(x_1,\cdots,x_{m+n-1}):=\sum_{\sigma\in S(n,m-1)} (-1)^\sigma \alpha(\beta(x_{\sigma(1)},\cdots,x_{\sigma(n)}),x_{\sigma(n+1)}, \cdots,x_{\sigma(m+n-1)}),
\label{eq:fgcirc}
\end{equation}
with the sum taken over all $(n,m-1)$-shuffles.
Then $\big(\oplus_{n=0}^\infty \Hom(\wedge^{n+1}\g,\g),[\cdot,\cdot]_{\NR}\big)$ is a graded Lie algebra \cite{NR,NR2}. With this setup, a Lie algebra structure on $\g$ is precisely a degree 1 solution $\pi\in \Hom(\wedge^2\g,\g)$ of the {\rm Maurer-Cartan} equation
$$ [\pi,\pi ]_{\NR}=0.$$

Let $\g$ be a vector space. Define a graded vector space $\oplus_{n=1}^{+\infty}V_n$ by
$$
V_n=\Hom(\wedge^{n+1}\g, \g)\oplus\Hom(\wedge^n\g, \g).
$$
Define a bilinear map $[\cdot, \cdot]_{\LD}:\oplus_{n=1}^{+\infty}V_n\times\oplus_{n=1}^{+\infty}V_n\lon\oplus_{n=1}^{+\infty}V_{n}$ by
$$
[(\alpha, P), (\beta, Q)]_\LD=([\alpha, \beta]_\NR, [\alpha, Q]_\NR-(-1)^{kl}[\beta, P]_{\NR}),
$$
where $(\alpha, P)\in V_{k}, (\beta, Q)\in V_{l}$. Then $(\oplus_{n=1}^{+\infty}V_n, [\cdot, \cdot]_\LD)$ is a graded Lie algebra\cite{JS,LQYZ,TFS}. Furthermore, a \LD pair on $\g$ is precisely a degree $1$ solution $(\pi, D)\in\Hom(\wedge^2\g, \g)\oplus\Hom(\g, \g)$ of the {\rm Maurer-Cartan} equation
$$
[(\pi, D), (\pi, D)]_\LD=0.
$$

Let $\g$ and $\h$ be vector spaces. The elements in $\g$ are denoted by $x_i$ and the elements in $\h$ are denoted by $u_j$. For a multilinear map $\kappa: \wedge^{k}\g\otimes\wedge^{l}\h\lon \g,$ we define $\hat{\kappa}\in\Hom(\wedge^{k+l}(\g\oplus \h), \g\oplus \h)$ by
\begin{equation*}
\hat{\kappa}(x_1+u_1,\cdots,x_{k+l}+u_{k+l})=\sum_{\sigma\in S(k,l)}(-1)^{\sigma}\Big(\kappa(x_{\sigma(1)},\cdots,x_{\sigma(k)},u_{\sigma(k+1)},\cdots,u_{\sigma(k+l)}),0\Big).
\end{equation*}
Similarly, for $\kappa: \wedge^{k}\g\otimes\wedge^{l}\h\lon \h,$ we define $\hat{\kappa}\in\Hom(\wedge^{k+l}(\g\oplus \h), \g\oplus \h)$ by
\begin{equation*}
\hat{\kappa}(x_1+u_1,\cdots,x_{k+l}+u_{k+l})=\sum_{\sigma\in S(k,l)}(-1)^{\sigma}\Big(0, \kappa(x_{\sigma(1)},\cdots,x_{\sigma(k)},u_{\sigma(k+1)},\cdots,u_{\sigma(k+l)})\Big).
\end{equation*}
The linear map $\hat{\kappa}$ is called a {\bf lift} of $\kappa$.
Denote by $$(\g, \h)^{k,l}=\wedge^{k}\g\otimes\wedge^{l}\h,$$ then $$\wedge^{n}(\g\oplus \h)\cong\oplus_{k+l=n}(\g, \h)^{k,l},$$
and $$\Hom(\wedge^{n}(\g\oplus \h), \g\oplus \h)\cong(\oplus_{k+l=n}\Hom((\g, \h)^{k,l},\g))\oplus(\oplus_{k+l=n}\Hom((\g, \h)^{k,l},\h)),$$ where the isomorphism is the lift. The lift can be used to transfer to $$\oplus^{+\infty}_{n=1}\Big((\oplus_{k+l=n}\Hom((\g, \h)^{k,l},\g))\oplus(\oplus_{k+l=n}\Hom((\g, \h)^{k,l},\h))\Big)$$ the graded Lie algebra structure of $(\oplus_{n=1}^{+\infty}\Hom(\wedge^{n}(\g\oplus\h), (\g\oplus\h)), [\cdot, \cdot]_{\NR})$. 
\begin{lem}\label{lem1111}
With the above notations,
$$\Big(\oplus^{+\infty}_{n=1}\Big((\oplus_{k+l=n}\Hom((\g, \h)^{k,l},\g))\oplus(\oplus_{k+l=n}\Hom((\g, \h)^{k,l},\h))\Big), ~~[\cdot, \cdot]\Big),$$ 
is a graded Lie algebra, where $[\cdot,\cdot]$ satisfies
\begin{equation}\label{defibra}
\widehat{[f, g]}=[\hat{f}, \hat{g}]_{\NR}.
\end{equation}
\end{lem}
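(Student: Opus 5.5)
The plan is to realize the bracket $[\cdot,\cdot]$ as the transport of the Nijenhuis--Richardson bracket along a degree-preserving linear isomorphism; the graded Lie identities then hold automatically, because they hold for $[\cdot,\cdot]_{\NR}$. Write
$$\mathcal{C}=\oplus^{+\infty}_{n=1}\Big((\oplus_{k+l=n}\Hom((\g, \h)^{k,l},\g))\oplus(\oplus_{k+l=n}\Hom((\g, \h)^{k,l},\h))\Big).$$
Grade $\mathcal{C}$ by declaring elements with $k+l=n$ to have degree $n-1$, matching the degree of $\Hom(\wedge^{n}(\g\oplus\h),\g\oplus\h)$. The lift $f\mapsto\hat f$, extended linearly over the direct sums, gives a map $\Phi:\mathcal{C}\lon\oplus_{n\ge1}\Hom(\wedge^n(\g\oplus\h),\g\oplus\h)$, and $\Phi$ preserves degree.

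The first step is to check that $\Phi$ is a linear isomorphism. It suffices to do this in each arity $n$. There we use the decomposition $\wedge^n(\g\oplus\h)\cong\oplus_{k+l=n}(\g,\h)^{k,l}$, under which a multilinear map on $\wedge^n(\g\oplus\h)$ is the same as a family of maps on the components $(\g,\h)^{k,l}$. We also split the target $\g\oplus\h$ into its two summands.

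One checks that $\hat\kappa$, restricted to arguments with $k$ entries from $\g$ and $l$ entries from $\h$, recovers $\kappa$. More precisely, evaluating on $(x_1,\dots,x_k,u_1,\dots,u_l)$ picks out only the identity shuffle, since any other shuffle places some $u$ into a $\g$-slot, and $u$ has zero $\g$-component. Moreover, $\hat\kappa$ vanishes on the other bidegrees. Hence the inverse is restriction to the bigraded pieces followed by projection to $\g$ or $\h$, so $\Phi$ is bijective in each arity.

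Having this, define
$$[f,g]:=\Phi^{-1}[\hat f,\hat g]_{\NR}.$$
This is exactly \eqref{defibra}, and it is the unique bracket with that property because $\Phi$ is injective. Since $\Phi$ preserves degree, $[\cdot,\cdot]$ is degree-preserving, i.e. it has degree zero in the shifted grading. Graded skew-symmetry and the graded Jacobi identity then follow by applying $\Phi^{-1}$ to the corresponding identities for $[\cdot,\cdot]_{\NR}$, which hold by \cite{NR,NR2}. So $\Phi$ is an isomorphism of graded Lie algebras, and the statement follows.

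The only real obstacle is the bookkeeping in the isomorphism step. One has to confirm that the sign conventions in the shuffle sum make $\hat\kappa$ genuinely alternating on $\g\oplus\h$, and that mixed inputs (arguments $x_i+u_i$ having both components) are handled correctly by multilinear expansion. I would verify this by expanding $\hat\kappa(x_1+u_1,\dots)$ multilinearly and matching terms with the standard identification of $\wedge^n(V\oplus W)$ with $\oplus_{k+l=n}\wedge^kV\otimes\wedge^lW$.
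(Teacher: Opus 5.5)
Your proposal is correct and takes the same route as the paper: transport $[\cdot,\cdot]_{\NR}$ along the lift isomorphism $\kappa\mapsto\hat\kappa$. The paper asserts this without further argument. Your check that the lift is a degree-preserving bijection, by restricting to the bidegree components $(\g,\h)^{k,l}$ and noting that only the identity shuffle survives, supplies the details the paper leaves implicit.
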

In particular, by \eqref{defibra}, we have that
\begin{eqnarray}\label{11111}
\left\{\begin{array}{rcl}
~~[f, g] &=&[f, g]_{\NR},\quad f\in\Hom(\wedge^i\g, \g), g\in\Hom(\wedge^{j}\g, \g), \\
~~[f, g]&=&[f, g]_{\NR}, \quad f\in\Hom(\wedge^i\h, \h), g\in\Hom(\wedge^j\h, \h),  \\
~~[f, g]&=&0, \quad\quad\quad\quad f\in\Hom(\wedge^{i}\g, \h), g\in\Hom(\wedge^{j}\g, \h)\\
~~[f_i, g_j]&\in&\Hom((\g, \h)^{i+j-1, l}, \h), \quad f_i\in\Hom(\wedge^{i}\g, \g), g_j\in\Hom((\g, \h)^{j, l}, \h)\\
~~[f_l, g_i]&\in&\Hom((\g, \h)^{l+i-1, j}, \h), \quad f_l\in\Hom(\wedge^{l}\g, \h), g_i\in\Hom((\g, \h)^{i, j}, \h)\\
~~[f_i, g_t]&\in&\Hom((\g, \h)^{i+t, j+s-1}, \h), \quad f_i\in\Hom((\g, \h)^{i, j}, \h), ~~g_t\in\Hom((\g, \h)^{t, s}, \h).
\end{array}\right.
\end{eqnarray}

Let $\g$ and $\h$ be vector spaces. Define a graded vector space $\huaL^{\g, \h}=\oplus_{n\in\mathbb{Z}}\huaL^{\g, \h}_n$ as follows
\begin{equation*}
\huaL^{\g, \h}_n=
\begin{cases}
0, & n\leq-1;\\
\Hom(\g, \h)\oplus 0, & n=0;\\
\Big(\oplus_{\substack{i+j=n+1 \\ i\geq 1}}\Hom(\wedge^{i}\g\otimes\wedge^{j}\h, \h)\Big)\oplus\Big(\oplus_{\substack{i+j=n \\ i\geq 1}}\Hom(\wedge^{i}\g\otimes\wedge^{j}\h, \h)\Big), & n\geq 1.
\end{cases}
\end{equation*}
Let $\pi_\g\in\Hom(\wedge^{2}\g, \g)$ and $\pi_\h\in\Hom(\wedge^{2}\h, \h)$, $D\in\Hom(\g, \g)$ and $K\in\Hom(\h, \h)$ satisfy
$$
[\pi_\g, \pi_\g]=0,\quad [\pi_\g, D]=0, \quad [\pi_\h, \pi_\h]=0, \quad [\pi_\h, K]=0.
$$
By \eqref{defibra} and \eqref{11111}, we have that $(\g, \pi_\g)$ and $(\h, \pi_\h)$ are Lie algebras,  $D:\g\lon\g$ and $K:\h\lon\h$ are derivations on $(\g, \pi_\g)$ and $(\h, \pi_\h)$ respectively. Thus $(\pi_\g+\pi_\h, D+K)$ is a Maurer-Cartan element of $\Big(\oplus^{+\infty}_{n=1}\Big((\oplus_{k+l=n}\Hom((\g, \h)^{k,l},\g))\oplus(\oplus_{k+l=n}\Hom((\g, \h)^{k,l},\h))\Big), ~~[\cdot, \cdot]\Big)$. By Proposition \ref{twistdg}, we have the following result.

\begin{pro}
With the above notations, $(\huaL^{\g, \h}=\oplus_{n\in\mathbb{Z}}\huaL^{\g, \h}_n, [\cdot, \cdot]_{\huaL^{\g, \h}}, d)$ is a differential graded Lie algebra, and $\huaL^{\g, \h}_0$ is abelian, where $[\cdot, \cdot]_{\huaL^{\g, \h}}: \huaL^{\g, \h}\times\huaL^{\g, \h}\lon\huaL^{\g, \h}$ and $d:\huaL^{\g, \h}\lon\huaL^{\g, \h}$ are defined by
\begin{eqnarray*}
[(f, \alpha), (g, \beta)]_{\huaL^{\g, \h}}&=&([f, g], [f, \beta]-(-1)^{kl}[g, \alpha]),\\
d(f, \alpha)&=&([\pi_\g+\pi_\h, f], [\pi_\g+\pi_\h, \alpha]-(-1)^{k}[f, D+K]),
\end{eqnarray*}
for all $f\in\oplus_{\substack{i+j=k+1\\i\geq 1}}\Hom(\wedge^{i}\g\otimes\wedge^{j}\h, \h), g\in\oplus_{\substack{i+j=l+1\\i\geq 1}}\Hom(\wedge^{i}\g\otimes\wedge^{j}\h, \h), \alpha\in\oplus_{\substack{i+j=k\\i\geq 1}}\Hom(\wedge^{i}\g\otimes\wedge^{j}\h, \h)$ and $\beta\in\oplus_{\substack{i+j=l\\i\geq 1}}\Hom(\wedge^{i}\g\otimes\wedge^{j}\h, \h)$.
\end{pro}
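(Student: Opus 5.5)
The plan is to realize $\huaL^{\g, \h}$ as a graded Lie subalgebra of a twisted differential graded Lie algebra, and to check that it is closed under both the bracket and the twisted differential.

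\textbf{Step 1: the ambient algebra.} Write $\mathcal{V}$ for the lifted graded Lie algebra of Lemma \ref{lem1111}. Form the \LD-type graded Lie algebra on $\g\oplus\h$:
\begin{itemize}
\item in degree $n$ it is $\mathcal{V}_{n}\oplus\mathcal{V}_{n-1}$, that is, pairs $(f,\alpha)$ with $f\in\Hom(\wedge^{n+1}(\g\oplus\h),\g\oplus\h)$ and $\alpha\in\Hom(\wedge^{n}(\g\oplus\h),\g\oplus\h)$;
\item its bracket is $[(f,\alpha),(g,\beta)]=([f,g],[f,\beta]-(-1)^{kl}[g,\alpha])$.
\end{itemize}
By \cite{JS,LQYZ,TFS}, together with \eqref{defibra} transporting $[\cdot,\cdot]_\NR$ on $\g\oplus\h$, this is a graded Lie algebra.

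\textbf{Step 2: the Maurer-Cartan element and the twist.} The element $(\pi_\g+\pi_\h, D+K)$ has degree $1$. Using \eqref{11111}, the cross terms vanish:
\begin{itemize}
\item $[\pi_\g,\pi_\h]=0$, since $\hat\pi_\g$ vanishes on $\h$-inputs and has values in $\g$, while $\hat\pi_\h$ is the reverse;
\item likewise $[\pi_\g,K]=0$ and $[\pi_\h,D]=0$.
\end{itemize}
Hence the Maurer-Cartan equation reduces to the four hypotheses $[\pi_\g,\pi_\g]=[\pi_\g,D]=[\pi_\h,\pi_\h]=[\pi_\h,K]=0$. By Proposition \ref{twistdg}, $d_{tw}=[(\pi_\g+\pi_\h,D+K),\cdot]$ is a differential. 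Expanding the formula gives $d_{tw}(f,\alpha)=([\pi,f],[\pi,\alpha]-(-1)^{k}[f,D+K])$, which matches the stated $d$ up to the sign convention $-(-1)^{1\cdot k}[f,D+K]$.

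\textbf{Step 3: $\huaL^{\g,\h}$ is a subalgebra.} Both components of $\huaL^{\g,\h}$ are spanned by maps in $\Hom(\wedge^i\g\otimes\wedge^j\h,\h)$ with $i\ge1$. The last three lines of \eqref{11111} show that brackets of such maps stay $\h$-valued, with $\g$-arity $i+t\ge1$. The one case not listed (bracket of an $\h$-valued map with $\g$-arity $0$) does not occur here. Therefore $\huaL^{\g,\h}$ is closed under the bracket.

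\textbf{Step 4: $\huaL^{\g,\h}$ is closed under $d$.} This is the main obstacle. Split $d$ into its pieces:
\begin{itemize}
\item $[\pi_\g,f]$ raises $\g$-arity by $1$ and stays $\h$-valued, by the fourth line of \eqref{11111};
\item $[\pi_\h,f]$ keeps $\g$-arity $i\ge1$ and raises $\h$-arity, by the last line;
\item $[f,D]$ and $[f,K]$ preserve both arities.
\end{itemize}
Here one uses that $f$ is $\h$-valued and kills pure-$\h$ inputs when $i\ge1$, so no $\g$-valued component can appear. Degrees work out as $i+j=n+2$ in the first component and $n+1$ in the second. In degree $0$ one must check that $d(\tau,0)$ lands in $\huaL_1$; it does, since its components lie in $\Hom(\wedge^2\g,\h)\oplus\Hom(\g\otimes\h,\h)$ and $\Hom(\g,\h)$. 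Degree $-1$ is zero, so nothing arises there.

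\textbf{Step 5: $\huaL_0$ is abelian.} This follows from the third line of \eqref{11111}: $[f,g]=0$ for $f,g\in\Hom(\g,\h)$. Indeed $\hat f\circ_\NR\hat g$ feeds an $\h$-value into $\hat f$, which vanishes on $\h$.

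Finally, the restriction of the twisted differential graded Lie algebra structure to the subspace $\huaL^{\g,\h}$ is exactly the stated bracket and $d$, which proves the proposition.
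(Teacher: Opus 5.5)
Your proposal is correct and follows the paper's route: $(\pi_\g+\pi_\h,D+K)$ is a Maurer--Cartan element of the LieDer-type graded Lie algebra on $\g\oplus\h$, since the cross terms vanish by \eqref{11111}, and Proposition \ref{twistdg} then gives the twisted differential $d$. Where you go beyond the paper is in making explicit what it leaves implicit: that $\huaL^{\g,\h}$ is closed under the bracket and under $d$ (using that its elements are $\h$-valued with $\g$-arity at least $1$), and that $\huaL^{\g,\h}_0=\Hom(\g,\h)$ is abelian.
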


\begin{thm}\label{zmc}
The set of {\rm Maurer-Cartan} elements of $(\huaL^{\g, \h}=\oplus_{n\in\mathbb{Z}}\huaL^{\g, \h}_n, [\cdot, \cdot]_{\huaL^{\g, \h}}, d)$ is $Z^{2}_{nab}(\g,D;\h,K)$, i.e.,
    \begin{equation}
        Z^{2}_{nab}(\g,D;\h,K)\cong\mathrm{MC}(\huaL^{\g, \h}),
    \end{equation}
where $Z^{2}_{nab}(\g,D; \h,K)$ is the set of non-abelian 2-cocycles. 
    \begin{proof}
        Suppose $\mu=(\omega+\varrho,\chi)\in \mathrm{MC}(\huaL^{\g, \h})$, where $\omega\in\Hom(\wedge^{2}\g,\h),\ \varrho\in\Hom(\g\otimes\h,\h)$ and $\chi\in\Hom(\g,\h)$. Then by the fact that $\mu=(\omega+\varrho,\chi)$ satisfies the {\rm Maurer-Cartan} equation, i.e.,
        \begin{equation*}
            d(\omega+\varrho,\chi)+\frac{1}{2}[(\omega+\varrho,\chi),(\omega+\varrho,\chi)]_{\huaL^{\g, \h}}=0,
        \end{equation*}
        we obtain that 
        \begin{eqnarray*}
            0&=&d(\omega+\varrho,\chi)+\frac{1}{2}[(\omega+\varrho,\chi),(\omega+\varrho,\chi)]_{\huaL^{\g, \h}}\\
            %&=&([\pi_{\g}+\pi_{\h},\omega+\varrho],[\pi_{\g}+\pi_{\h},\chi]+[\omega+\varrho,D+K])\\
            %&&+(\frac{1}{2}[\omega+\varrho,\omega+\varrho],[\omega+\varrho,\chi])\\
            &=&([\pi_{\g}+\pi_{\h},\omega+\varrho]+\frac{1}{2}[\omega+\varrho,\omega+\varrho],[\pi_{\g}+\pi_{\h},\chi]+[\omega+\varrho,D+K]+[\omega+\varrho,\chi]).
        \end{eqnarray*}
        In other words, we get 
        \begin{equation*}
            [\pi_{\g}+\pi_{\h},\omega+\varrho]+\frac{1}{2}[\omega+\varrho,\omega+\varrho]=0\in\Hom(\wedge^{3}\g,\h)\oplus\Hom(\wedge^{2}\g\otimes\h,\h)\oplus\Hom(\g\otimes\wedge^{2}\h,\h),
        \end{equation*}
        and 
        \begin{equation*}
            [\pi_{\g}+\pi_{\h},\chi]+[\omega+\varrho,D+K]+[\omega+\varrho,\chi]=0\in\Hom(\wedge^{2}\g,\h)\oplus\Hom(\g\otimes\h,\h).
        \end{equation*}
    By \eqref{11111}, we have 
        \begin{eqnarray*}
            \Hom(\wedge^{3}\g,\h)&:&[\pi_{\g},\omega]+[\omega,\varrho]=0,\\
            \Hom(\wedge^{2}\g\otimes\h,\h)&:& [\pi_{\g},\varrho]+[\omega.\pi_{\h}]+\frac{1}{2}[\varrho,\varrho]=0,\\
            \Hom(\g\otimes\wedge^{2}\h,\h)&:&[\varrho,\pi_{\h}]=0,
        \end{eqnarray*}
        as well as
        \begin{eqnarray*}
            \Hom(\wedge^{2}\g,\h)&:&[\pi_{\g},\chi]+[\omega,D]+[\omega,K]+[\varrho,\chi]=0,\\
            \Hom(\g\otimes\h,\h)&:&[\varrho,D]+[\varrho,K]+[\pi_{\h},\chi]=0.
        \end{eqnarray*}
        The first, second, fourth and fifth equations correspondence to (\ref{nc2}) and (\ref{nc1}), (\ref{nc4}) and (\ref{nc3}) respectively. Furthermore, the third equation implies that $\varrho(x,\cdot)\in\Der(\h)$ for all $x\in\g$. Therefore, the triple $(\varrho,\omega,\chi)$ is a non-abelian 2-cocycle.

        Conversely, let $(\varrho,\omega,\chi)\in Z^{2}_{nab}(\g,D;\h,K)$ be a non-abelian 2-cocycle of $(\g,D)$ with values in $(\h,K)$. Define $\mu=(\omega+\varrho,\chi)\in \huaL^{\g, \h}_{1}$, by direct calculation, one obtains that $\mu$ is a Maurer-Cartan element of the differential graded Lie algebra $(\huaL^{\g, \h},[\cdot,\cdot]_{\huaL^{\g, \h}},d)$.
    \end{proof}
\end{thm}    

\begin{thm}
    The set of connected components of $\mathrm{Del}(\huaL^{\g, \h})$ is $H^2_{nab}(\g,D;\h, K)$, i.e.,
    $$
    \pi_0(\mathrm{Del}(\huaL^{\g, \h}))\cong H^2_{nab}(\g, D;\h, K)\cong\mathbf{Ext_{nab}}(\g,D;\h,K).
    $$
    \begin{proof}
       By Theorem \ref{zmc}, the set of objects of  $\mathrm{Del}(\huaL^{\g, \h})$ is isomorphic to $Z^{2}_{nab}(\g,D; \h,K)$. Then, to prove $$\pi_0(\mathrm{Del}(\huaL^{\g, \h}))\cong H^2_{nab}(\g,D; \h,K)\cong\mathbf{Ext_{nab}}(\g,D;\h,K),$$ it suffices to show that $\Hom_{\mathrm{Del}(\huaL^{\g, \h})}(\mu, \mu')$ is non-empty iff $[\mu]=[\mu']$, where
       $\mu=(\varrho, \omega, \chi), \mu'=(\varrho', \omega', \chi')\in Z^{2}_{nab}(\g,D; \h,K)$. 

        If $\Hom_{\mathrm{Del}(\huaL^{\g, \h})}(\mu, \mu')\neq \emptyset$, then there exists $(\tau,0)\in \huaL^{\g, \h}_{0}$ such that,
        \begin{equation}\label{mceq}
            \mu=e^{\ad(\tau,0)}\mu'+\frac{\Id-e^{\ad(\tau,0)}}{\ad(\tau,0)}d(\tau,0).
        \end{equation}
        By \eqref{11111}, it follows that $[\tau,\omega']=0,\ [\tau,\chi']=0$ and $[\tau,[\tau,\varrho']]=0$. Therefore, 
        \begin{equation*}
            \ad(\tau,0)(\mu')=[(\tau,0),(\omega'+\varrho',\chi')]=([\tau,\omega']+[\tau,\varrho'],[\tau,\chi'])=([\tau,\varrho'],0),
        \end{equation*}
        and consequently, $$\ad^{2}(\tau,0)(\mu')=[(\tau,0),([\tau,\varrho'],0)]=([\tau,[\tau,\varrho']],0)=0.$$
        Hence, we have
        \begin{eqnarray*}
            e^{\ad(\tau,0)}\mu'&=&\sum_{n\geq0}\frac{1}{n!}\ad^{n}(\tau,0)(\mu')%=\big(\Id+\ad(\tau,0)+\frac{1}{2}\ad^{2}(\tau,0)+\cdots\big)(\mu')\\
            =\big(\Id+\ad(\tau,0)\big)(\mu')\\%=(\omega'+\varrho',\chi')+([\tau,\varrho'],0)\\
            &=&(\omega'+\varrho'+[\tau,\varrho'],\chi').
        \end{eqnarray*}
        By \eqref{11111}, we also have $$[\tau,[\pi_{\g},\tau]]=0, \quad [\tau, [\tau, [\pi_\h, \tau]]]=0,\quad [\tau,[\tau,D]]=0, \quad [\tau,[\tau,K]]=0.$$ Therefore,
        \begin{eqnarray*}
            [(\tau,0),d(\tau,0)]&=&[(\tau,0),([\pi_{\g},\tau]+[\pi_{\h},\tau],-[\tau,D]-[\tau,K])]\\
           % &=&([\tau,[\pi_{\g},\tau]]+[\tau,[\pi_{\h},\tau]],-[\tau,[\tau,D]]-[\tau,[\tau,K]])\\
            &=&([\tau,[\pi_{\h},\tau]],0),
        \end{eqnarray*}
        and 
        $$\ad^{2}(\tau,0)(d(\tau,0))=[(\tau,0),[(\tau,0),d(\tau,0)]]=[(\tau,0),([\tau,[\pi_{\h},\tau]],0)]=([\tau,[\tau,[\pi_{\h},\tau]]],0)=0.$$
        Consequently,
        \begin{eqnarray*}
            \frac{\Id-e^{\ad(\tau,0)}}{\ad(\tau,0)}d(\tau,0)&=&-\sum_{n\geq0}\frac{1}{(n+1)!}\ad^{n}(\tau,0)d(\tau,0)\\
            %&=&-\Big(d(\tau,0)+\frac{1}{2}[(\tau,0),d(\tau,0)]+\frac{1}{6}[(\tau,0),[(\tau,0),d(\tau,0)]]+\cdots\Big)\\
           % &=&-\Big(([\pi_{\g},\tau]+[\pi_{\h},\tau],-[\tau,D]-[\tau,K])+\frac{1}{2}([\tau,[\pi_{\h},\tau]],0)\Big)\\
            &=&-([\pi_{\g},\tau]+[\pi_{\h},\tau]+\frac{1}{2}[\tau,[\pi_{\h},\tau]],-[\tau,D]-[\tau,K]).
        \end{eqnarray*}
        We can rewrite equation (\ref{mceq}) and regroup the result according to the corresponding components, we obtain the following equations
        \begin{eqnarray*}
            \Hom(\wedge^{2}\g,\h)&:& \omega=\omega'+[\tau,\varrho']-[\pi_{\g},\tau]-\frac{1}{2}[\tau,[\pi_{\h},\tau]],\\
            \Hom(\g\otimes\h,\h)&:& \varrho=\varrho'-[\pi_{\h},\tau],\\
            \Hom(\g,\h)&:&   \chi=\chi'+[\tau,D]+[\tau,K].
        \end{eqnarray*}
        The above three equations give us the equalities (\ref{eq22}) (\ref{eq11}) and (\ref{eq33}) respectively. Hence, two non-abelian 2-cocycles $(\varrho,\omega,\chi)$ and $(\varrho',\omega',\chi')$ are equivalent through $\tau\in\Hom(\g,\h)$.

    If $(\varrho,\omega,\chi)$ and $(\varrho',\omega',\chi')$ are two equivalent non-abelian 2-cocycles, i.e., there exists a linear map $\tau:\g\lon\h$, such that the equations (\ref{eq11}), (\ref{eq22}) and (\ref{eq33}) are satisfied. Then by direct calculation, one can see that $\mu:=(\omega+\varrho,\chi)$ and $\mu':=(\omega'+\varrho',\chi')$ are equivalent through $e^{\ad(\tau,0)}$, which means that $\Hom_{\mathrm{Del}(\huaL^{\g, \h})}(\mu, \mu')\neq \emptyset$.
Therefore, $\Hom_{\mathrm{Del}(\huaL^{\g, \h})}(\mu, \mu')$ is non-empty iff $[\mu]=[\mu']$, which implies that
$$
\pi_0(\mathrm{Del}(\huaL^{\g, \h}))\cong H^2_{nab}(\g, D; \h,K)\cong\mathbf{Ext_{nab}}(\g,D;\h,K).$$
We finish the proof.        
\end{proof}
\end{thm}

\section{Non-abelian extensions of \LD pairs and the homotopy category of strict Lie 2-algebras with strict derivations}
In this section, we investigate the relationship between non-abelian extensions of \LD pairs and hHom sets between strict Lie 2-algebras with strict derivations. We show that $(\g,D)\mapsto{\rm{\bf Ext_{nab}}}(\g,D;\h,K)$ is a representable functor in a suitable category. We recall some definitions of strict Lie $2$-algebras firstly. 
\begin{defi}{\rm(\cite{Baez})}
A {\bf strict Lie 2-algebra} is a graded vector space $\g=\g_{0}\oplus\g_{1}$ with the following maps 
\begin{itemize}
  \item a linear map $d_\g:\g_{1}\lon \g_{0}$,
  \item skew-symmetric bilinear maps $l_{2}:\g_{0}\wedge \g_{0}\lon \g_{0}$ and $\g_{0}\wedge \g_{1}\lon \g_{1}$, also denoted by $[\cdot,\cdot]$,
\end{itemize}
which satisfy
\begin{equation*}\label{eqLie2 1}
d_\g([x,a])=[x,d_\g (a)], \quad [d_\g(a),b]=[a,d_\g (b)],
\end{equation*}
\begin{equation*}\label{eqLie2 2}
[x,[y,z]]+[y,[z,x]]+[z,[x,y]]=0,
\quad
[x,[y,a]]+[y,[a,x]]+[a,[x,y]]=0,
\end{equation*}
for all $x,y,z\in \g_{0},\  a,b\in \g_{1}$. We also refer to a strict Lie 2-algebra as a quadruple $(\g_{0},\g_{1},d_\g,[\cdot,\cdot])$.
\end{defi}
\begin{defi}{\rm(\cite{Baez})}
Let $\g=(\g_{0},\g_{1},d_{\g},[\cdot,\cdot]_{\g})$ and $\h= (\h_{0},\h_{1},d_{\h},[\cdot,\cdot]_{\h})$ be two strict Lie 2-algebras. A {\bf homomorphism} $\phi:\g \lon \h$ is a triple $(\phi_{0},\phi_{1},\phi_{2})$ of linear maps where $\phi_{0}:\g_{0}\lon \h_{0},\  \phi_{1}:\g_{1}\lon \h_{1}$ and $\phi_{2}:\wedge^2\g_{0}\lon \h_{1}$, such that 
\begin{eqnarray}
\label{eqLie2hom1}\phi_{0}\circ d_{\g}&=&d_{\h}\circ \phi_{1},\\
\label{eqLie2hom2}\phi_{0}([x,y]_{\g})-[\phi_{0}(x),\phi_{0}(y)]_{\h}&=&d_{\h}(\phi_{2}(x,y)),\\
\label{eqLie2hom3}\phi_{1}([x,a]_{\g})-[\phi_{0}(x),\phi_{1}(a)]_{\h}&=&\phi_{2}(x,d_{\g}(a)),\\
\label{eqLie2hom4}
[\phi_{0}(x),\phi_{2}(y,z)]_{\h}+[\phi_{0}(y),\phi_{2}(z,x)]_{\h}&+&[\phi_{0}(z),\phi_{2}(x,y)]_{\h}\\
\nonumber=\phi_{2}([x,y]_{\g},z)&+&\phi_{2}([y,z]_{\g},x)+\phi_{2}([z,x]_{\g},y),
\end{eqnarray}
for all $x,y,z\in \g,\  a\in \h$.
\end{defi}

\begin{ex}\label{impex1}
Given a Lie algebra $(\g,[\cdot,\cdot]_{\g})$, we obtain the following strict Lie 2-algebras.

{\bf (1)} Let $(\g_0, \g_1, d_\g, [\cdot, \cdot])=(\g,0,0,[\cdot,\cdot]_{\g})$. Then $(\g,0,0,[\cdot,\cdot]_{\g})$ is a strict Lie 2-algebra. 

{\bf (2)} Let $\g_{0}=\Der(\g),\  \g_{1}=\g$ and 
\begin{align*}
    d_\g=\ad&:\g\lon \Der (\g),& d_\g(x)&=\ad(x),&\\
    [\cdot,\cdot]&:\Der(\g)\wedge \Der(\g) \lon \Der (\g),&[D,D']&=D\circ D'-D'\circ D,&\\
    [\cdot,\cdot]&:\Der(\g)\wedge \g \lon \g,&[D,x]&=D(x),&
\end{align*}
for all $D,D'\in \Der(\g),  x\in \g$. Then $(\Der(\g),\g,\ad,[\cdot,\cdot])$ is a strict Lie 2-algebra. 
\end{ex}

\begin{defi}{\rm(\cite{LLS})}
Let $\g=(\g_{0},\g_{1},d_\g,[\cdot,\cdot])$ be a strict Lie 2-algebra. A {\bf strict derivation} on $\g$ is a pair of linear maps $\huaD=(\huaD_{0},\huaD_{1})$, where $\huaD_{0}:\g_{0}\lon \g_{0},\  \huaD_{1}:\g_{1}\lon \g_{1}$, such that 
\begin{eqnarray*}\label{eq2der}
\huaD_{0}\circ d_\g&=&d_\g \circ \huaD_{1},\\
\huaD_{0}([x,y])&=&[\huaD_{0}(x),y]+[x,\huaD_{0}(y)],\\
\huaD_{1}([x,a])&=&[\huaD_{0}(x),a]+[x,\huaD_{1}(a)],
\end{eqnarray*}
for all $x,y\in \g_{0},\  a\in \g_{1}$.
We denote a strict Lie 2-algebra with a strict derivation by $(\g,\huaD)$, called a {\bf \LtoD} pair. 
\end{defi}

\begin{defi}{\rm (\cite{LW})}
A {\bf homomorphism} $(\phi,\theta_{\phi}):(\g,\huaD)\lon (\h,\huaK)$ between two \LtoD pairs is a Lie 2-algebra homomorphism $\phi=(\phi_{0},\phi_{1},\phi_{2}):\g\lon\h$ with a linear map $\theta_{\phi}:\g_{0}\lon\h_{1}$ such that 
\begin{eqnarray}
\label{eqLie2Derhom1}
\phi_{0}\circ \huaD_{0}-\huaK_{0}\circ \phi_{0}&=&d_{\h}\circ \theta_{\phi},\\
\label{eqLie2Derhom2}
\phi_{1}\circ \huaD_{1}-\huaK_{1}\circ \phi_{1}&=&\theta_{\phi}\circ d_{\g},\\
\label{eqLie2Derhom3}
\huaK_{1}(\phi_{2}(x,y))-\phi_{2}(\huaD_{0}x,y)-\phi_{2}(x,\huaD_{0}y)
&=&[\theta_{\phi}(x),\phi_{0}(y)]_{\h}+[\phi_{0}(x),\theta_{\phi}(y)]_{\h}-\theta_{\phi}([x,y]_{\g}),
\end{eqnarray}
for all $x,y\in \g$.
\end{defi}

\begin{defi}{\rm (\cite{LW})}
Let $(\phi,\theta_{\phi}),(\psi,\theta_{\psi}):(\g,\huaD)\lon(\h,\huaK)$ be two homomorphisms of \LtoD pairs. A {\bf 2-homomorphism} $\vartheta:(\phi,\theta_{\phi})\Longrightarrow (\psi,\theta_{\psi})$ is a linear map $\vartheta:\g_0\lon\h_1$ satisfying
\begin{eqnarray}\label{2homo1}
\psi_{0}-\phi_{0}&=&d_{\h}\circ \vartheta,\quad \psi_{1}-\phi_{1}=\vartheta\circ d_{\g},\\
\label{2homo2}
\psi_{2}(x,y)-\phi_{2}(x,y)&=&\vartheta([x,y]_{\g})-[\phi_{0}(x),\vartheta(y)]_{\h}-[\vartheta(x),\psi_{0}(y)]_{\h},\\
\label{eqLie2Der2hom}
\vartheta \circ \huaD_{0}-\huaK_{1}\circ \vartheta&=&\theta_{\psi}-\theta_{\phi},
\end{eqnarray}
for all $x,y\in \g_{0}$. We call two homomorphisms are {\bf homotopic} if there exists a 2-homomorphism between them.
\end{defi}

Clearly, the homotopy relation between homomorphisms of \LtoD pairs is an equivalence relation and is compatible with composition of homomorphisms. This allows us to define the homotopy category of the category of \LtoD pairs. See \cite{May} for more details of the homotopy category of a category.

\begin{defi}
Denote by $\mathtt{LD}$ the category of \LtoD pairs, and denote by $\mathtt{hLD}$ the corresponding homotopy category.
\end{defi}

\begin{ex}
Given a \LD pair $(\g,D)$, by direct calculation, $(D, 0)$ is a strict derivation on $(\g,0,0,[\cdot,\cdot]_{\g})$, we denote this \LtoD pair by $(\g,\huaD)$.
\end{ex}
Let $(\h, K)$ be a \LD pair. By Example \ref{impex1}, $(\h_0, \h_1, d_\h, [\cdot, \cdot])=(\Der(\h),\h,\ad,[\cdot,\cdot])$ is a strict Lie 2-algebra. Define $\huaK_{\Der}=(\huaK_{\Der, 0}, \huaK_{\Der, 1}): (\Der(\h),\h,\ad,[\cdot,\cdot])\lon (\Der(\h),\h,\ad,[\cdot,\cdot])$ by $\huaK_{\Der, 0}=\ad K,~~\huaK_{\Der, 1}=K$, then we have 
\begin{equation*}
\huaK_{\Der,0}\circ \ad(x)=\ad K(\ad (x))=[K,\ad(x)]=\ad(K(x))=\ad \circ \huaK_{\Der,1}(x),
\end{equation*}
\begin{eqnarray*}
\huaK_{\Der,0}([K',K''])&=&[K,[K',K'']]=[[K,K'],K'']+[K,[K',K'']]\\
&=&[\huaK_{\Der,0}(K'),K'']+[K',\huaK_{\Der,0}(K'')],
\end{eqnarray*}
and
\begin{eqnarray*}
\huaK_{\Der,1}([K',x])&=&K(K'(x))
=[K,K'](x)+K'(K(x))=[[K,K'],x]+[K',K(x)]\\
&=&[\huaK_{\Der,0}(K'),x]+[K',\huaK_{\Der,1}(x)]
\end{eqnarray*}
for all $K',K''\in \Der(\h)$ and $x\in \h$.
Thus, we have the following result.
\begin{pro}
With the above notations, $\huaK_{\Der}=(\ad K,K)$ is a strict derivation on the \LtoD pair
$(\Der(\h),\h,\ad,[\cdot,\cdot])$. Denote this \LtoD pair by $(\h_{\Der},\huaK_{\Der})$.
\end{pro}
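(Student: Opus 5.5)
The plan is to check directly that the pair $\huaK_{\Der}=(\ad K, K)$ satisfies the three defining identities of a strict derivation on the strict Lie $2$-algebra $(\Der(\h),\h,\ad,[\cdot,\cdot])$ of Example \ref{impex1}(2). Here $\ad K$ means the commutator $K'\mapsto [K,K']=K\circ K'-K'\circ K$ on $\Der(\h)$.

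Before the identities, I will confirm the maps land in the right spaces. $K$ is a linear map $\h\to\h$. For $K'\in\Der(\h)$, the commutator of two derivations is again a derivation, so $[K,K']\in\Der(\h)$ and $\ad K$ maps $\Der(\h)$ to itself.

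The three identities are then checked in order.

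\textbf{Compatibility with $d_\h=\ad$.} We need $\ad K\circ \ad(x)=\ad(K(x))$ for $x\in\h$. Evaluating on $v\in\h$ gives
$$[K,\ad x](v)=K[x,v]_\h-[x,Kv]_\h=[Kx,v]_\h,$$
which is exactly the derivation property of $K$. This is the only step that genuinely uses $K\in\Der(\h)$.

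\textbf{Derivation of the bracket on $\Der(\h)$.} We need $[K,[K',K'']]=[[K,K'],K'']+[K',[K,K'']]$. This is the Jacobi identity for the commutator bracket in $\gl(\h)$. (Note that the displayed computation in the text has a typo in its middle term.)

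\textbf{Compatibility with the action $[K',x]=K'(x)$.} We need
$$K(K'(x))=[K,K'](x)+K'(K(x)),$$
which is the definition of the commutator.

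None of these steps is a real obstacle. The only point requiring care is the first identity, where the derivation property of $K$ on $\h$ is used; the other two are formal identities in $\gl(\h)$. Together the three checks show that $\huaK_{\Der}$ is a strict derivation, so $(\h_{\Der},\huaK_{\Der})$ is a \LtoD pair.
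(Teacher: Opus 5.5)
Your proposal is correct and matches the paper's argument: it checks the same three identities in the same way, namely the derivation property of $K$ for compatibility with $\ad$, the Jacobi identity for the commutator on $\Der(\h)$, and the definition of the commutator for the action on $\h$. You are also right that the middle term of the paper's second computation should read $[K',[K,K'']]$ rather than $[K,[K',K'']]$.
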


By Theorem \ref{corres}, non-abelian extensions of $(\g, D)$ by $(\h, K)$ are classified by the second non-abelian cohomology $H_{nab}^2
(\g,D; \h,K)$. At the end of this section, we will establish the relationship between non-abelian extensions of \LD pairs and hHom sets between strict Lie 2-algebras with strict derivations via $H^2_{nab}(\g,D; \h,K)$.
\begin{pro}\label{2coas2homo}
Let $(\g,D)$ and $(\h,K)$ be two \LD pairs. Then there exists a one-to-one correspondence between the set of non-abelian 2-cocycles $Z^2_{nab}(\g,D;\h,K)$ and $\rm{Hom}_{\mathtt{LD}}((\g,\huaD),(\h_{\Der},\huaK_{\Der}))$.  
\end{pro}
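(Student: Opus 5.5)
We unwind the definition of a homomorphism $(\phi,\theta_\phi):(\g,\huaD)\lon(\h_{\Der},\huaK_{\Der})$ of \LtoD pairs. Here the source is $(\g,0,0,[\cdot,\cdot]_\g)$ with $\huaD=(D,0)$, and the target is $(\Der(\h),\h,\ad,[\cdot,\cdot])$ with $\huaK_{\Der}=(\ad K,K)$. Since $\g_1=0$, the component $\phi_1$ vanishes, so the data reduce to three linear maps
$$
\phi_0:\g\lon\Der(\h),\qquad \phi_2:\wedge^2\g\lon\h,\qquad \theta_\phi:\g\lon\h .
$$
The correspondence will be
$$
(\varrho,\omega,\chi)\longmapsto (\phi_0,\phi_2,\theta_\phi)=(\varrho,\ \omega,\ \pm\chi),
$$
where the signs are fixed by matching the equations below; one may need $\phi_2=-\omega$ or $\theta_\phi=-\chi$. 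This map is evidently injective, and it is surjective onto the homomorphisms once each axiom is shown to be equivalent to one of the cocycle conditions.

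I would match the axioms in order.

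\textbf{Lie 2-algebra homomorphism axioms.} Equation \eqref{eqLie2hom1} is vacuous, since $\g_1=0$. Equation \eqref{eqLie2hom3} is also vacuous: both sides vanish because $\phi_1=0$ and $d_\g=0$. Equation \eqref{eqLie2hom2} reads
$$
\phi_0([x,y]_\g)-[\phi_0(x),\phi_0(y)]=\ad(\phi_2(x,y)).
$$
Evaluated on $u\in\h$, this is exactly \eqref{nc1} after choosing $\phi_2=-\omega$, because $[\omega(x,y),u]_\h=\ad(\omega(x,y))u$. Equation \eqref{eqLie2hom4}, where the bracket $\Der(\h)\wedge\h\to\h$ is evaluation, becomes
$$
\varrho(x)\phi_2(y,z)+\text{cyc.}=\phi_2([x,y]_\g,z)+\text{cyc.},
$$
which is \eqref{nc2}. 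It is linear in $\phi_2$, so it is insensitive to the sign of $\phi_2$.

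\textbf{Derivation-compatibility axioms.} Equation \eqref{eqLie2Derhom2} is vacuous, since $\g_1=0$. Equation \eqref{eqLie2Derhom1} reads
$$
\varrho(D x)-[K,\varrho(x)]=\ad(\theta_\phi(x)).
$$
Applied to $v\in\h$ this gives
$$
\varrho(Dx)v-K\varrho(x)v+\varrho(x)Kv=[\theta_\phi(x),v]_\h,
$$
which is \eqref{nc3} with $\theta_\phi=-\chi$. Finally, equation \eqref{eqLie2Derhom3}, with $\huaK_1=K$, $\huaD_0=D$ and $\phi_2=-\omega$, reads
$$
-K\omega(x,y)+\omega(Dx,y)+\omega(x,Dy)=-\varrho(x)\chi(y)+\varrho(y)\chi(x)+\chi([x,y]_\g).
$$
This is exactly \eqref{nc4}.

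\textbf{Main obstacle.} The only real work is bookkeeping of signs and conventions. One has to check that the single consistent choice $\phi_2=-\omega$, $\theta_\phi=-\chi$ simultaneously makes \eqref{eqLie2hom2}, \eqref{eqLie2Derhom1} and \eqref{eqLie2Derhom3} agree with \eqref{nc1}, \eqref{nc3} and \eqref{nc4}. It also has to be confirmed that the brackets $[\theta_\phi(x),\phi_0(y)]_\h$ in \eqref{eqLie2Derhom3} mean $-\phi_0(y)\theta_\phi(x)$, i.e.\ that they use the skew-symmetric extension of the action. The sign check above indicates this works. If the paper's convention differs, I would instead take $\phi_2=\omega$ with a compensating sign on $\theta_\phi$, and recheck.

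With the signs fixed, the assignment is a bijection between $Z^2_{nab}(\g,D;\h,K)$ and $\Hom_{\mathtt{LD}}((\g,\huaD),(\h_{\Der},\huaK_{\Der}))$. Its inverse is $(\phi,\theta_\phi)\mapsto(\phi_0,-\phi_2,-\theta_\phi)$, and both maps are well defined by the term-by-term equivalences above.
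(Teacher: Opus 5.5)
Your proposal is correct and follows the paper's proof. You use the same assignment $(\varrho,\omega,\chi)\mapsto(\phi_0,\phi_1,\phi_2,\theta_\phi)=(\varrho,0,-\omega,-\chi)$ with inverse $(\phi_0,-\phi_2,-\theta_\phi)$, and you verify it axiom by axiom against \eqref{nc1}--\eqref{nc4}. Your matching is actually more accurate than the paper's write-up, which attributes \eqref{eqLie2hom2} to \eqref{nc2} and calls \eqref{eqLie2hom4} automatic; as you note, since $\phi_0=\varrho\neq 0$, these two axioms correspond to \eqref{nc1} and \eqref{nc2} respectively.
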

\begin{proof}
Suppose $(\varrho,\omega,\chi)\in Z^2_{nab}(\g,D;\h,K)$, define $(\phi_{0},\phi_{1},\phi_{2},\theta_{\phi})=(\varrho,0,-\omega,-\chi)$. Then   
\begin{equation*}
\phi_{0}\circ d_{\g}=\phi_{0} \circ 0=0\circ \ad =d_{\h}\circ \phi_{1}.
\end{equation*}
Hence, (\ref{eqLie2hom1}) is satisfied. Moreover, by (\ref{nc2}), it follows that
\begin{eqnarray*}
\phi_{0}([x,y]_{\g})-[\phi_{0}(x),\phi_{0}(y)]_{\h_{\Der}}&=&\varrho([x,y]_{\g})-[\varrho(x),\varrho(y)]_{\Der(\h)}=\varrho([x,y]_{\g})-\varrho(x)\varrho(y)+\varrho(y)\varrho(x)\\
&=&-\ad \omega(x,y)=d_{\h}\phi_{2}(x,y),
\end{eqnarray*}
for all $x,y\in \g$. Thus (\ref{eqLie2hom2}) is satisfied as well. By the fact that $d_{\g}$ and $\phi_{0}$ are zero maps, equations (\ref{eqLie2hom3}) and (\ref{eqLie2hom4}) are satisfied automatically. Therefore, the triple $(\phi_{0},\phi_{1},\phi_{2})$ is a Lie 2-algebra homomorphism between $\g=(\g,0,0,[\cdot,\cdot]_{\g})$ and $(\Der(\h),\h,\ad,[\cdot,\cdot])$. 

By equation (\ref{nc3}), we have 
\begin{eqnarray*}
\phi_{0}\circ \huaD_{0}(x)-\huaK_{\Der,0}\circ \phi_{0}(x)
=\varrho(D(x))-[K,\varrho (x)]%=\varrho(D(x))-K\circ \varrho(x)+\varrho(x)\circ K\\
=-\ad \chi(x)=d_{\h}\circ \theta_{\h}(x),
\end{eqnarray*}
for all $x\in \g$. Thus equation (\ref{eqLie2Derhom1}) is satisfied. Moreover, since $d_{\g}$ and $\phi_{0}$ are zeros, the condition (\ref{eqLie2Derhom2}) holds automatically. By \eqref{nc4}, we further obtain 
\begin{eqnarray*}
&&\huaK_{\Der, 1}(\phi_{2}(x,y))-\phi_{2}(\huaD_{0}(x),y)-\phi_{2}(x,\huaD_{0}(y))-[\theta_{\phi}(x),\phi_{0}(y)]_{\h}-[\phi_{0}(x),\theta_{\phi}(y)]_{\h}+\theta_{\phi}([x,y]_{\g})\\
%&=&K(-\omega(x,y))-(-\omega(D(x),y))-(-\omega(x,D(y)))-[-\chi(x),\varrho(y)]-[\varrho(x),-\chi(y)]+(-\chi([x,y]_{\g}))\\
&=&-\big( K\omega(x,y)-\omega(D(x),y)-\omega(x,D(y))-[\chi (x),\varrho (y)]-[\varrho(x),\chi (y)]+\chi([x,y]_{\g})\big)\\
%&=&-\big( K\omega(x,y)-\omega(D(x),y)-\omega(x,D(y))+\varrho(y)\chi(x)-\varrho(x)\chi(y)+\chi([x,y]_{\g})\big)\\
&=&0,
\end{eqnarray*}
for all $x,y\in \g$, which means that \eqref{eqLie2Derhom3} is satisfied. Therefore, $(\phi_{0},\phi_{1},\phi_{2},\theta_{\phi})=(\varrho,0,-\omega,-\chi):(\g,\huaD)\lon(\h_{\Der},\huaK_{\Der})$ is a homomorphism of \LtoD pairs.

Conversely, given a homomorphism $(\phi_{0},\phi_{1},\phi_{2},\theta_{\phi}):(\g,\huaD)\lon(\h_{\Der},\huaK_{\Der})$, define $(\varrho,\omega,\chi)=(\phi_{0},-\phi_{2},-\theta_{\phi})$. Similarly as above, by direct calculation, one can check that $(\varrho,\omega,\chi)=(\phi_{0},-\phi_{2},-\theta_{\phi})$ is a non-abelian 2-cocycle of $(\g,D)$ with values in $(\h,K)$.

The two processes mentioned above are inverse to each other, in other words, we obtain a one-to-one correspondence between $Z^2_{nab}(\g, D; \h, K)$ and $\Hom_{\mathtt{LD}}((\g,\huaD),(\h_{\Der},\huaK_{\Der}))$.
\end{proof}

\begin{thm}\label{0000}
Let $(\g,D)$ and $(\h,K)$ be two \LD pairs. Then there is a one-to-one correspondence between $H^2_{nab}(\g,D; \h, K)$ and $\rm{hHom}_{\mathtt{LD}}((\g,\huaD),(\h_{\Der},\huaK_{\Der}))$. Moreover,
\begin{equation*}
{\rm{\bf Ext_{nab}}}(\g, D; \h, K)\cong \rm{hHom}_{\mathtt{LD}}((\g,\huaD),(\h_{\Der},\huaK_{\Der})).
\end{equation*}
\end{thm}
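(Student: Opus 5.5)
Using Proposition \ref{2coas2homo}, the bijection $Z^2_{nab}(\g,D;\h,K)\cong \Hom_{\mathtt{LD}}((\g,\huaD),(\h_{\Der},\huaK_{\Der}))$ given by $(\varrho,\omega,\chi)\mapsto(\varrho,0,-\omega,-\chi)$, it suffices to show that this bijection carries the equivalence relation \eqref{eq11}--\eqref{eq33} on non-abelian $2$-cocycles exactly onto the homotopy relation given by $2$-homomorphisms. Then the induced map on quotients is a bijection $H^2_{nab}(\g,D;\h,K)\cong \mathrm{hHom}_{\mathtt{LD}}((\g,\huaD),(\h_{\Der},\huaK_{\Der}))$, and composing with Theorem \ref{corres} gives the stated isomorphism with ${\rm{\bf Ext_{nab}}}(\g,D;\h,K)$.

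Let $\phi=(\varrho,0,-\omega,-\chi)$ and $\psi=(\varrho',0,-\omega',-\chi')$ correspond to two cocycles. A $2$-homomorphism $\vartheta:\phi\Rightarrow\psi$ is a linear map $\vartheta:\g\lon\h$. I will take $\vartheta=\tau$ (possibly up to a sign, to be fixed by the computation) and translate each condition in turn.
\begin{itemize}
\item The condition $\psi_0-\phi_0=\ad\circ\vartheta$ reads $\varrho'(x)-\varrho(x)=\ad\vartheta(x)$, which is \eqref{eq11} with $\tau=-\vartheta$.
\item The condition $\psi_1-\phi_1=\vartheta\circ d_\g$ is trivial, because $d_\g=0$ and $\phi_1=\psi_1=0$.
\item Condition \eqref{2homo2} reads
\[
\omega(x,y)-\omega'(x,y)=\vartheta([x,y]_\g)-\varrho(x)\vartheta(y)-[\vartheta(x),\varrho'(y)],
\]
where the last bracket in $\h_{\Der}$ is $-\varrho'(y)\vartheta(x)$.
\item Condition \eqref{eqLie2Der2hom} reads $\vartheta(D x)-K\vartheta(x)=-\chi'(x)+\chi(x)$, which with $\tau=-\vartheta$ is exactly \eqref{eq33}.
\end{itemize}

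The main obstacle is matching the $\omega$-equation. Substituting $\tau=-\vartheta$, the condition becomes
\[
\omega-\omega'=-\tau([x,y]_\g)+\varrho(x)\tau(y)-\varrho'(y)\tau(x).
\]
I then rewrite $\varrho(x)\tau(y)=\varrho'(x)\tau(y)+[\tau(x),\tau(y)]_\h$ using \eqref{eq11}. This produces
\[
\omega-\omega'=\varrho'(x)\tau(y)-\varrho'(y)\tau(x)+[\tau(x),\tau(y)]_\h-\tau([x,y]_\g),
\]
which is precisely \eqref{eq22}. I expect the sign conventions (the orientation of $\vartheta$ and the convention for $[\cdot,\cdot]$ between $\Der(\h)$ and $\h$) to need care here. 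If the direction does not match, I will use $\vartheta:\psi\Rightarrow\phi$ instead, which is harmless since both relations are symmetric.

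Reversing each step shows that any $\tau$ satisfying \eqref{eq11}--\eqref{eq33} defines a $2$-homomorphism $\vartheta=-\tau$. Hence two cocycles are equivalent if and only if the corresponding homomorphisms are homotopic, and the bijection of Proposition \ref{2coas2homo} descends to the quotients as required.
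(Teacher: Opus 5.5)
Your proposal is correct and follows the paper's route. You use the bijection of Proposition \ref{2coas2homo}, check condition by condition that an equivalence $\tau$ of cocycles corresponds to a $2$-homomorphism, and then compose with Theorem \ref{corres}. Your sign bookkeeping is more careful than the paper's: the paper takes $\vartheta=\tau$ as a $2$-homomorphism $\phi\Rightarrow\psi$, which only works in the direction $\psi\Rightarrow\phi$, whereas your choice $\vartheta=-\tau:\phi\Rightarrow\psi$ makes all the conditions match \eqref{eq11}--\eqref{eq33} exactly.
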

\begin{proof}
Suppose that $(\varrho,\omega,\chi)$ and $(\varrho',\omega',\chi')$ are two equivalent non-abelian 2-cocycles, and that the equivalence is given by $\tau:\g\lon\h$. By Proposition \ref{2coas2homo}, the maps $(\phi_{0},\phi_{1},\phi_{2},\theta_{\phi})=(\varrho,0,-\omega,-\chi)$ and 
$(\psi_{0},\psi_{1},\psi_{2},\theta_{\psi})=(\varrho',0,-\omega',-\chi')$ are homomorphisms of Lie2Dear pairs. Define $\vartheta=\tau:\g\lon\h$, by (\ref{eq11}), we have
\begin{equation*}
\psi_{0}(x)-\phi_{0}(x)=\varrho (x)-\varrho'(x)=\ad \circ \tau(x)=d_{\h}\circ \vartheta(x),
\end{equation*}
for all $x\in \g$, thus the first equation of (\ref{2homo1}) holds. The second of (\ref{2homo1}) holds automatically as $d_{\g}, \psi_{1}$ and $\phi_{1}$ are zero maps. By equation (\ref{eq22}), we obtain
\begin{eqnarray*}
\psi_{2}(x,y)-\phi_{2}(x,y)&=&(-\omega)(x,y)-(-\omega')(x,y)\\
&=&\varrho'(x)\tau(y)-\varrho'(y)\tau(x)+[\tau(x), \tau(y)]_\h-\tau([x, y]_\g)\\
&=&\vartheta([x,y]_{\g})-[\phi_{0}(x),\vartheta (y)]-[\vartheta (x),\psi_{0}(y)],
\end{eqnarray*}
for all $x,y\in \g$, which means that equation (\ref{2homo2}) holds. Therefore, $(\phi_{0},\phi_{1},\phi_{2})$ and $(\psi_{0},\psi_{1},\psi_{2})$ are homotopic as homomorphisms of Lie 2-algebras. By (\ref{eq33}), we obtain
\begin{eqnarray*}
\vartheta \circ \huaD_{0}-\huaK_{1}\circ \vartheta=\tau\circ D-K\circ \tau=(-\chi')-(-\chi)=\theta_{\psi}-\theta_{\phi}.
\end{eqnarray*}
Hence $\vartheta=\tau$ is a 2-homomorphism from $(\phi_{0},\phi_{1},\phi_{2},\theta_{\phi})$ to 
$(\psi_{0},\psi_{1},\psi_{2},\theta_{\psi})$. In other words, $(\phi_{0},\phi_{1},\phi_{2},\theta_{\phi})$ and 
$(\psi_{0},\psi_{1},\psi_{2},\theta_{\psi})$ are homotopic. The other direction follows by a similar argument. Thus, $H^2_{nab}(\g,\h)\cong\rm{hHom}_{\mathtt{LD}}((\g,\huaD),(\h_{\Der},\huaK_{\Der}))$. By Theorem \ref{corres},
${\rm{\bf Ext_{nab}}}(\g, D; \h, K)\cong \rm{hHom}_{\mathtt{LD}}((\g,\huaD),(\h_{\Der},\huaK_{\Der}))$.
\end{proof}
According to Proposition \ref{2coas2homo} and Theorem \ref{0000}, we have the following result.
\begin{thm}
    Let $(\g,D)$ and $(\h,K)$ be two \LD pairs. Denote by $\mathtt{LieDer}$ the category of \LD pairs. Then the functor
    \begin{equation*}
       \mathtt{LieDer}\lon\mathtt{Set},\quad  (\g,D)\mapsto {\rm{\bf Ext_{nab}}}(\g,D;\h,K),
    \end{equation*}
    is representable in the homotopy category $\mathtt{hLD}$ of \LtoD pairs, which is represented by the \LtoD pair $(\h_{\Der},\huaK_{\Der})$. Here, $\mathtt{LieDer}$ is viewed as subcategory of $\mathtt{LD}$.
\end{thm}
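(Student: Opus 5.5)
The plan is to read off representability from Theorem \ref{0000}, which already supplies, for each \LD pair $(\g,D)$, a bijection
\[
\Phi_{(\g,D)}:{\rm{\bf Ext_{nab}}}(\g,D;\h,K)\lon \mathrm{hHom}_{\mathtt{LD}}((\g,\huaD),(\h_{\Der},\huaK_{\Der})).
\]
This bijection is the composite of $\overline{\Delta}$ from Theorem \ref{corres} with the identification $[(\varrho,\omega,\chi)]\mapsto[(\varrho,0,-\omega,-\chi)]$ from Proposition \ref{2coas2homo}. What remains is to make both sides functors on $\mathtt{LieDer}$ and to check naturality. First I embed $\mathtt{LieDer}$ into $\mathtt{LD}$ by $(\g,D)\mapsto(\g,\huaD)=((\g,0,0,[\cdot,\cdot]_\g),(D,0))$. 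A \LD homomorphism $\psi:(\g',D')\to(\g,D)$ goes to $(\psi,0,0,\theta=0)$; this is a homomorphism of \LtoD pairs because (\ref{eqLie2Derhom1}) reduces to $\psi\circ D'=D\circ\psi$. Under this embedding the right-hand side is the contravariant functor $\mathrm{hHom}_{\mathtt{hLD}}(-,(\h_{\Der},\huaK_{\Der}))$, with precomposition acting on homotopy classes.

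Next I make ${\rm{\bf Ext_{nab}}}(-;\h,K)$ contravariant through pullback. Given $\psi:(\g',D')\to(\g,D)$ and an extension $(\hat\g,\hat D)$ of $(\g,D)$, set $\psi^*\hat\g=\hat\g\times_\g\g'$ with derivation $(\hat D,D')$. This is well defined because $p\hat D=Dp$ and $D\psi=\psi D'$. It is an extension of $(\g',D')$ by $(\h,K)$, and isomorphic extensions pull back to isomorphic ones. At the cocycle level I will show that pulling back along $\psi$ sends $(\varrho,\omega,\chi)$ to $(\varrho\circ\psi,\omega\circ(\psi\wedge\psi),\chi\circ\psi)$: take the section $s'(x')=(s(\psi x'),x')$ and read off (\ref{111}) directly.

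The remaining step, which is really the whole content, is naturality. I compute the composite $(\varrho,0,-\omega,-\chi)\circ(\psi,0,0,0)$ in $\mathtt{LD}$. Using the composition law for \LtoD homomorphisms, with $\phi_2$ of the composite being $\phi_2\circ(\psi\wedge\psi)+\phi_1\circ\psi_2$ and $\theta$ being $\theta_\phi\circ\psi+\phi_1\circ\theta_\psi$, and the fact that $\phi_1=0$, $\psi_2=0$, $\theta_\psi=0$, this composite equals $(\varrho\psi,0,-\omega(\psi\wedge\psi),-\chi\psi)$. That is exactly the image of the pulled-back cocycle, so the squares commute on representatives and hence on classes.

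The one point needing care is that the composition law in $\mathtt{LD}$ is not written out in the paper. I will state the standard formula from \cite{LW} and note that, since the source has trivial degree-$1$ part, only the terms displayed above survive. Once naturality holds, $\Phi$ is a natural isomorphism, and so the functor is represented by $(\h_{\Der},\huaK_{\Der})$ in $\mathtt{hLD}$.
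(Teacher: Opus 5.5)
Your proposal is correct and follows the paper's route: the paper likewise deduces the statement directly from Proposition \ref{2coas2homo} and Theorem \ref{0000}. The paper's argument is only that one line, so your pullback functoriality on ${\rm{\bf Ext_{nab}}}$ and your naturality check via the composition law ($\phi_2\circ(\psi\wedge\psi)+\phi_1\circ\psi_2$ and $\theta_\phi\circ\psi_0+\phi_1\circ\theta_\psi$) supply exactly the verification the paper leaves implicit.
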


\section{Non-abelian extensions of LieDer pairs and $(\g, D)$-kernels}
In this section, we introduce the notion of the $(\g, D)$-kernel, which will be used to construct the cohomology groups of \LD pairs and to study non-abelian extensions of \LD pairs. We recall the representations and cohomology theory of \LD pairs firstly.
\begin{defi}{\rm(\cite{TFS})}
Let $(\g, D)$ be a {\rm LieDer} pair. A {\bf representation} of $(\g, D)$ on a vector space $V$ is a pair $(\rho, T)$, where $\rho:\g\lon\gl(V)$ is a representation of the Lie algebra $\g$ on the vector space $V$ and $T\in\gl(V)$ such that the following equation holds:
\begin{equation}\label{ldrep}
    T(\rho(x)u)=\rho(D(x))u+\rho(x)T(u), \quad \forall x\in\g, u\in V.
\end{equation}
\end{defi}
Let $(\g, D)$ be a {\rm LieDer} pair and $(\rho, T)$ be a representation of $(\g, D)$ on a vector space $V$. Denote by $\dM_\rho^{CE}:\Hom(\wedge^{n}\g, V)\lon\Hom(\wedge^{n+1}\g, V)$ the Chevalley-Eilenberg coboundary operator of the Lie algebra $\g$ with coefficients in the representation $(V, \rho)$. Define the $1$-cochains $C^{1}_{\mathrm{LieDer}}(\g, V)$ to be $\Hom(\g, V)$. For $n\geq 2$, define the space of $n$-cochains $C^{n}_{\mathrm{LieDer}}(\g, V)$ by
$$
C^{n}_{\mathrm{LieDer}}(\g, V)=\Hom(\wedge^{n}\g, V)\oplus\Hom(\wedge^{n-1}\g, V).
$$
For $n\geq 1$, define an operator $\delta:C^{n}_{\mathrm{LieDer}}(\g, V)\lon C^{n}_{\mathrm{LieDer}}(\g, V)$ by
\begin{equation}\label{deeqsigma}
\delta(\omega_n)=\sum_{i=1}^{n}\omega_n\circ(\Id\otimes\cdots\otimes D\otimes\cdots\otimes\Id)-T\circ\omega_n.
\end{equation}
For $n=1$, define the coboundary operator $\partial_\rho:C^{1}_{\mathrm{LieDer}}(\g, V)\lon C^{2}_{\mathrm{LieDer}}(\g, V)$ by
$$
\partial_\rho(\omega_1)=(\dM_\rho^{CE}\omega_1, -\delta(\omega_1)). 
$$
For $n\geq 2$, define the coboundary operator $\partial_\rho:C^{n}_{\mathrm{LieDer}}(\g, V)\lon C^{n+1}_{\mathrm{LieDer}}(\g, V)$ by
\begin{equation}\label{deficeo}
\partial_\rho(\omega_n, \chi_{n-1})=\Big(\dM_\rho^{CE}\omega_n, \dM_\rho^{CE}\chi_{n-1}+(-1)^n\delta(\omega_n)\Big)
\end{equation}
where $\omega_n\in\Hom(\wedge^{n}\g, V)$ and $\chi_{n-1}\in\Hom(\wedge^{n-1}\g, V)$. Then $(C^{*}_{\mathrm{LieDer}}(\g, V), \partial_\rho)$ is a complex\cite{TFS}.
\begin{defi}{\rm(\cite{TFS})}
The cohomology of the complex $(C^{*}_{\mathrm{LieDer}}(\g, V), \partial_\rho)$ is called the {\bf cohomology} of the \LD pair $(\g, D)$ with values in $(V, T)$. Denote the set of $n$-cocycles by $Z^{n}_{\LD}(\g,D; \h,K)$, the $n$-th cohomology group by $H^{n}_{\LD}(\g, D;\h,K)$.
\end{defi}
\subsection{The relation between non-abelian extensions of LieDer pairs and $(\g, D)$-kernels}
Let $\g$ be a Lie algebra. Denote by $\Out(\g)=\Der(\g)/\ad(\g)$ the set of outer derivations of $\g$, then $(\Out(\g), [\cdot, \cdot]_{\Out(\g)})$ is a Lie algebra as $\ad(\g)$ is the ideal of $(\Der(\g), [\cdot, \cdot]_{\Der(\g)})$. Assume $D\in\Der(\g)$, denote by $\bar{D}$ the equivalence class of $D$ in $\Out(\g)$.

\begin{defi}
    Let $(\g,D)$ and $(\h,K)$ be two \LD pairs. A {\bf $(\g,D)$-kernel} for $(\h, K)$ is a Lie algebra homomorphism $\frkk: \g\lon \Out(\h)$  such that 
    \begin{equation}\label{gdk0}
        [\overline{K},  \frkk(x)]_{\Out(\h)}=\frkk(D(x)),\quad \forall x\in\g,
    \end{equation}
    where $\overline{K}$ is the outer derivation induced by $K$.
\end{defi}
Denote by $\frkz(\h)$ the center of a Lie algebra $\h$. It is well known that every derivation $K:\h\lon\h$ preserves $\frkz(\h)$. Let $\nu\in\Out(\h)$ and choose $T\in\Der(\h)$ such that $\overline{T}=\nu$. Then the linear map $$\widetilde{\nu}:\frkz(\h)\lon\frkz(\h), \quad
\widetilde{\nu}(u)=T(u), \quad ~~\forall~~ u\in\frkz(\h),
$$
is independent of the choice of $T$. Hence, without ambiguity, $\nu\in\Out(\h)$ can be regarded as a linear map on $\frkz(\h)$.
Given a $(\g,D)$-kernel $\frkk$ for $(\h, K)$, then $\frkk(x)\in\Out(\h)$ for all $x\in\g$, which means that we can obtain a linear map $\rho_{\frkk}:\g\lon\gl(\frkz(\h))$ given by $$\rho_{\frkk}(x)(u)=\frkk(x)(u),\quad \forall x\in\g,\ u\in\frkz(\h).$$
\begin{pro}\label{repkk}
With the above notations,  $\rho_{\frkk}:\g\lon\gl(\frkz(\h))$ is a representation of $(\g,D)$ on $(\frkz(\h),K)$. The representation $\rho_{\frkk}$ is called the induced representation  of the $(\g,D)$-kernel $\frkk$.    
\end{pro}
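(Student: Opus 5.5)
We need to check three things: that $\rho_{\frkk}$ is well defined with values in $\gl(\frkz(\h))$, that it is a representation of the Lie algebra $\g$ on $\frkz(\h)$, and that the compatibility \eqref{ldrep} holds with $T=K|_{\frkz(\h)}$. The plan is to lift everything to $\Der(\h)$, where computations are honest compositions of linear maps, and then observe that inner derivations act trivially on the center.

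First, well-definedness. For $x\in\g$ choose $T_x\in\Der(\h)$ with $\overline{T_x}=\frkk(x)$. Since any derivation preserves $\frkz(\h)$, $T_x|_{\frkz(\h)}\in\gl(\frkz(\h))$. If $T_x'$ is another lift, then $T_x-T_x'=\ad(w)$ for some $w\in\h$, and $\ad(w)u=[w,u]_\h=0$ for $u\in\frkz(\h)$. So $\rho_{\frkk}(x)=T_x|_{\frkz(\h)}$ is independent of the lift. Linearity of $\rho_{\frkk}$ in $x$ follows because $T_{ax+by}$ and $aT_x+bT_y$ are both lifts of $\frkk(ax+by)=a\frkk(x)+b\frkk(y)$. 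The same argument shows that $K$ restricts to a linear map on $\frkz(\h)$, and this restriction is our $T$.

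Second, the representation property. Since $\frkk$ is a Lie algebra homomorphism into $\Out(\h)$ and the bracket on $\Out(\h)$ is induced from the commutator on $\Der(\h)$, the commutator $[T_x,T_y]=T_xT_y-T_yT_x$ is a lift of $\frkk([x,y]_\g)$. Restricting to $\frkz(\h)$ and using independence of the lift gives $\rho_{\frkk}([x,y]_\g)=\rho_{\frkk}(x)\rho_{\frkk}(y)-\rho_{\frkk}(y)\rho_{\frkk}(x)$. Restriction to an invariant subspace commutes with composition, so this step is immediate.

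Third, the compatibility \eqref{gdk0}. This says $[K,T_x]=KT_x-T_xK$ is a lift of $\frkk(D(x))$. Applying this to $u\in\frkz(\h)$ gives $K(\rho_{\frkk}(x)u)-\rho_{\frkk}(x)K(u)=\rho_{\frkk}(D(x))u$, which is exactly \eqref{ldrep} with $\rho=\rho_{\frkk}$ and $T=K|_{\frkz(\h)}$.

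There is no genuine obstacle here. The only point needing care is the repeated use of the fact that two lifts differ by an inner derivation, which vanishes on $\frkz(\h)$. This is used both for lifts of a single $\frkk(x)$ and when identifying brackets in $\Out(\h)$ with commutators in $\Der(\h)$ modulo $\ad(\h)$.
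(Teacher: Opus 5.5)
Your proof is correct and takes essentially the same route as the paper. The homomorphism property of $\frkk$ gives the representation identity on $\frkz(\h)$, and condition \eqref{gdk0} gives \eqref{ldrep} with $T=K|_{\frkz(\h)}$. You are more explicit than the paper about working with lifts in $\Der(\h)$ and about inner derivations vanishing on $\frkz(\h)$; the paper leaves this to its well-definedness remark just before the proposition.
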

\begin{proof}
Since
\begin{eqnarray*}
    \rho_{\frkk}([x.y]_{\g})u&=&\frkk([x,y]_{\g})(u)=[\frkk(x),\frkk(y)]_{\Out(\h)}(u)%=\frkk(x)(\frkk(y)(u))-\frkk(y)(\frkk(x)(u))\\
    =\rho_{\frkk}(x)(\rho_{\frkk}(y)u)-\rho_{\frkk}(y)(\rho_{\frkk}(x)u),
\end{eqnarray*}
for all $x\in\g,\ u\in \frkz(\h)$, then $\rho_{\frkk}:\g\lon\gl(\frkz(\h))$ is a Lie algebra representation. By equation (\ref{gdk0}), we have
\begin{eqnarray*}
   [\overline{K}, \frkk(x)]_{\Out(\g)}(u)&=&K(\rho_{\frkk}(x)u)-\rho_{\frkk}(x)K(u)=\frkk(D(x))(u)=\rho_{\frkk}(D(x))u,
\end{eqnarray*}
for all $x\in \g,\ u\in \frkz(\h)$. Thus $\rho_{\frkk}:\g\lon\gl(\frkz(\h))$ is a representation of $(\g,D)$ on $(\frkz(\h),K)$.
\end{proof}
Let $(\hat{\g}, \hat{D})$ be a non-abelian extension of $(\g, D)$ by $(\h, K)$. By Theorem \ref{corres}, there exists a $[(\varrho, \omega, \chi)]\in H^{2}_{nab}(\g, D;\h, K)$ corresponding to $(\hat{\g}, \hat{D})$. For the linear map $\varrho:\g\lon\Der(\h)$, we define a linear map $\upsilon:\g\lon\Out(\h)$ by
$$
\upsilon(x)=\overline{\varrho(x)}, \quad~~\forall x\in\g,
$$
where $\overline{\varrho(x)}$ denotes the equivalence class of $\varrho(x)$ in $\Out(\g)$. Moreover, if $[(\varrho', \omega', \chi')]=[(\varrho, \omega, \chi)]$, then there exists a linear map $\tau:\g\lon\h$ such that $\varrho-\varrho'=\ad\tau$, which implies that $\overline{\varrho(x)}=\overline{\varrho'(x)}$ for all $x\in\g$. Thus, the linear map $\upsilon:\g\lon\Out(\h)$ is well defined.
 
\begin{pro}\label{barrho}
   With the above notations, the map $\upsilon:\g\lon\Out(\h)$ is a $(\g,D)$-kernel for $(\h,K)$. Moreover, if $(\hat{\g}, \hat{D})$ and $(\tilde\g, \tilde{D})$ are two isomorphic non-abelian extensions of $(\g,D)$ by $(\h,K)$, then the corresponding $(\g,D)$-kernels for $(\h, K)$ are the same.
   \end{pro}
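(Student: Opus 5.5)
The plan is to verify the two conditions defining a $(\g,D)$-kernel directly from the cocycle identities, and then to reduce the second claim to Proposition \ref{pro11}.

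\textbf{Step 1: $\upsilon$ is a Lie algebra homomorphism.} Fix a section $s$ and the associated non-abelian $2$-cocycle $(\varrho,\omega,\chi)$ given by \eqref{111}. By Proposition \ref{sexnoon}, each $\varrho(x)\in\Der(\h)$. Rewrite equation \eqref{nc1} as
$$
\varrho([x,y]_\g)=[\varrho(x),\varrho(y)]_{\Der(\h)}-\ad\,\omega(x,y).
$$
Since $\ad(\h)$ is an ideal of $\Der(\h)$, passing to classes in $\Out(\h)$ kills the last term. This gives $\upsilon([x,y]_\g)=[\upsilon(x),\upsilon(y)]_{\Out(\h)}$, so $\upsilon$ is a Lie algebra homomorphism.

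\textbf{Step 2: the compatibility condition \eqref{gdk0}.} Rewrite equation \eqref{nc3} as the operator identity
$$
[K,\varrho(x)]_{\Der(\h)}=\varrho(D(x))+\ad\,\chi(x).
$$
Taking classes in $\Out(\h)$, and using $\overline{[K,\varrho(x)]}=[\overline{K},\overline{\varrho(x)}]$, yields $[\overline K,\upsilon(x)]_{\Out(\h)}=\upsilon(D(x))$. Hence $\upsilon$ is a $(\g,D)$-kernel. The independence of $\upsilon$ from the representative cocycle has already been noted before the statement, via \eqref{eq11}.

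\textbf{Step 3: isomorphic extensions give the same kernel.} Let $(\hat\g,\hat D)$ and $(\tilde\g,\tilde D)$ be isomorphic extensions. By Proposition \ref{pro11}, they determine the same class in $H^2_{nab}(\g,D;\h,K)$. Concretely, the section $s'=\kappa\circ s$ gives identical $\varrho$. Two equivalent cocycles differ in $\varrho$ by $\ad\tau$, by \eqref{eq11}, so their classes in $\Out(\h)$ coincide and the kernels are equal.

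No real obstacle is expected. The only points needing care are that $\overline{\,\cdot\,}:\Der(\h)\to\Out(\h)$ is a Lie algebra homomorphism, and that every $\varrho(x)$ is indeed a derivation of $\h$, which follows from the Jacobi identity in $\hat\g$ because $\h$ is an ideal.
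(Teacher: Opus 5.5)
Your proposal is correct and follows essentially the same route as the paper. You rewrite \eqref{nc1} and \eqref{nc3} as operator identities in $\Der(\h)$, discard the inner terms $\ad\omega(x,y)$ and $\ad\chi(x)$ on passing to $\Out(\h)$, and for the second claim use that isomorphic extensions give equivalent cocycles whose $\varrho$'s differ by $\ad\tau$.
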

    \begin{proof}
       By \eqref{nc1} and \eqref{nc3}, we obtain the following equations
    \begin{eqnarray*}\label{gdk1}\label{gdk2}
            \varrho([x,y]_{\g})=[\varrho(x),\varrho (y)]_{\Der(\g)}-\ad \omega(x,y),\quad
        \text{and}\quad
            K\circ \varrho(x)=\varrho(Dx)+\varrho(x)\circ K+\ad\chi(x),
        \end{eqnarray*}
        for all $x,y\in \g$. Thus, for all $x, y\in\g$, we have
        \begin{eqnarray*}
\upsilon([x,y]_{\g})&=&\overline{\varrho([x,y]_{\g})}=\overline{[\varrho(x),\varrho(y)]_{\Der(\h)}-\ad \omega(x,y)}=\overline{[\varrho (x),\varrho(y)]_{\Der(\h)}}=[\overline{\varrho (x)},\overline{\varrho (y)}]_{\Out(\h)}\\
&=&[\upsilon(x), \upsilon(y)]_{\Out(\h)},
        \end{eqnarray*}
        and
        \begin{eqnarray*}
    \upsilon(D(x))&=&\overline{\varrho(D(x))}=\overline{K\circ \varrho(x)-\varrho(x)\circ K+\ad \chi(x)}=\overline{K\circ\varrho(x)-\varrho(x)\circ K}\\
            &=&\overline{K}\circ \upsilon(x)-\upsilon(x)\circ\overline{K}=[\overline{K}, \upsilon(x)]_{\Out(\h)},
        \end{eqnarray*}
        which means that $\upsilon$ is a $(\g,D)$-kernel for $(\h, K)$.

Denote by $(\varrho,\omega,\chi)$ and $(\varrho',\omega',\chi')$ the corresponding non-abelian 2-cocycles of $(\hat\g, \hat{D})$ and $(\tilde\g, \tilde{D})$ respectively. Then there exists a linear map $\tau:\g\lon\h$ such that 
        \begin{equation*}
            \varrho'(x)-\varrho(x)=\ad \tau (x),
        \end{equation*}
       which means
        $
\overline{\varrho'(x)}=\overline{\varrho(x)},
        $
        for all $x\in\g$. Thus, the corresponding $(\g,D)$-kernels coincide.
    \end{proof}

For a given $(\g,D)$-kernel $\frkk$ for $(\h, K)$, denote by $\mathrm{Ext}(\g,D; \h, K)_{\frkk}$ the set of isomorphism classes of non-abelian extensions of $(\g,D)$ by $(\h,K)$ whose associated $(\g,D)$-kernel is $\frkk$. The set ${\rm{\bf Ext_{nab}}}(\g, D;\h, K)$ of all isomorphism classes of non-abelian extensions of $(\g,D)$ by $(\h,K)$ is then the disjoint union of the sets $\mathrm{Ext}(\g,D;\h,K)_{\frkk}$, parametrized by $\frkk$. Hence, the classification of all non-abelian extensions of $(\g, D)$ by $(\h, K)$ reduces to the following two questions.
\begin{que}\label{que12}
Let $(\g, D)$ and $(\h, K)$ be \LD pairs.
\begin{itemize}
\item {\rm(i)}:
For a fixed $(\g,D)$-kernel $\frkk$ for $(\h, K)$, give a parameterization of $\mathrm{Ext}(\g,D;\h,K)_{\frkk}$. 
\item {\rm(ii)}:
Find a method to decide for which $(\g,D)$-kernels the set $\mathrm{Ext}(\g,D;\h,K)_{\frkk}$ is nonempty.
\end{itemize}
\end{que}

\begin{defi}
    Let $\frkk:\g\lon\Out(\h)$ be a $(\g,D)$-kernel for $(\h, K)$. We say that $\frkk$ is {\bf integrable} if $\mathrm{Ext}(\g,D;\h,K)_{\frkk}\neq \emptyset$.
\end{defi}
Let $\frkk:\g\lon\Out(\h)$ be an integrable $(\g,D)$-kernel for $(\h, K)$. By Proposition \ref{repkk}, we have that $\rho_{\frkk}:\g\lon\gl(\frkz(\h))$ is a representation of $(\g,D)$ on $(\frkz(\h),K)$, where
$$
\rho_{\frkk}(x)u=\frkk(x)u, \quad \forall u\in\frkz(\h),  ~~x\in\g.
$$
Denote by $H_{\LD}^{n}(\g, D; \frkz(\h), K)_\frkk$ the $n$-th cohomology group of $(\g, D)$ with coefficients in the representation $(\rho_\frkk, K)$. Therefore, for the first problem, we have the following answer.
\begin{thm}\label{gdk}
With the above notations, the set $\mathrm{Ext}(\g,D;\h,K)_{\frkk}$ is isomorphic to $H^2_{\LD}(\g,D;\frkz(\h),K)_{\frkk}$. Moreover, we have 
    \begin{equation*}
      {\rm{\bf Ext_{nab}}}(\g,D;\h,K)=\bigsqcup_{\text{Integrable}~~(\g,D)\text{-kernels}}H^2_{\LD}(\g,D; \frkz(\h), K)_{\frkk}.
    \end{equation*}
    \end{thm}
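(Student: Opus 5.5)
The plan is to show that $\mathrm{Ext}(\g,D;\h,K)_{\frkk}$ is a torsor under $H^2_{\LD}(\g,D;\frkz(\h),K)_{\frkk}$. Since $\frkk$ is integrable, we fix a base point, and the torsor then becomes a bijection. By Theorem \ref{corres}, isomorphism classes of extensions correspond to classes in $H^2_{nab}(\g,D;\h,K)$. So it suffices to work with non-abelian $2$-cocycles $(\varrho,\omega,\chi)$ whose induced kernel $\upsilon$ (Proposition \ref{barrho}) equals $\frkk$. Fix one such cocycle $(\varrho_0,\omega_0,\chi_0)$. Any other cocycle inducing $\frkk$ has $\overline{\varrho(x)}=\overline{\varrho_0(x)}$, so $\varrho-\varrho_0=\ad\tau$ for some linear $\tau:\g\lon\h$. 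Choose $\tau$ by picking a linear complement to $\frkz(\h)$ and using that $\ad:\h\to\ad(\h)$ has kernel $\frkz(\h)$. Replacing the cocycle by its equivalent one via $\tau$, as in \eqref{eq11}--\eqref{eq33}, we may assume $\varrho=\varrho_0$.

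Next I would show that two cocycles with the same $\varrho_0$ differ by a $\frkz(\h)$-valued LieDer $2$-cocycle. Set $\omega'=\omega-\omega_0$ and $\chi'=\chi-\chi_0$. Subtracting \eqref{nc1} for the two cocycles gives $\ad\omega'(x,y)=0$, so $\omega'$ takes values in $\frkz(\h)$. Subtracting \eqref{nc3} gives $\ad\chi'(x)=0$, so $\chi'$ also takes values in $\frkz(\h)$. On $\frkz(\h)$ the operator $\varrho_0(x)$ acts as $\rho_{\frkk}(x)$. Subtracting \eqref{nc2} then gives $\dM^{CE}_{\rho_\frkk}\omega'=0$. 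Subtracting \eqref{nc4} gives $\dM^{CE}_{\rho_\frkk}\chi'+\delta(\omega')=0$ up to the sign convention in \eqref{deficeo}. I expect the sign bookkeeping here to be the main nuisance: with $n=2$, $(-1)^n\delta(\omega')$ must match $\omega'(Dx,y)+\omega'(x,Dy)-K\omega'(x,y)$. Hence $(\omega',\chi')\in Z^2_{\LD}(\g,D;\frkz(\h),K)_\frkk$. Conversely, adding any such $(\omega',\chi')$ to $(\varrho_0,\omega_0,\chi_0)$ gives a non-abelian $2$-cocycle with kernel $\frkk$, because all the bracket terms with central elements vanish.

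Then I would compare the two equivalence relations. Suppose $(\varrho_0,\omega_0+\omega',\chi_0+\chi')$ and $(\varrho_0,\omega_0+\omega'',\chi_0+\chi'')$ are equivalent via $\tau$. Then \eqref{eq11} forces $\ad\tau(x)=0$, so $\tau$ takes values in $\frkz(\h)$. The remaining equations \eqref{eq22} and \eqref{eq33} then reduce exactly to $(\omega'-\omega'',\chi'-\chi'')=\partial_{\rho_\frkk}\tau=(\dM^{CE}\tau,-\delta\tau)$, because $[\tau(x),\tau(y)]=0$ and $\varrho_0(x)\tau(y)=\rho_\frkk(x)\tau(y)$. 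Conversely, a coboundary yields an equivalence by the same central $\tau$.

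The subtle point is the normalization step. Two cocycles with $\varrho=\varrho_0$ may be equivalent only through a $\tau$ that is non-central. Such a $\tau$ must satisfy $\ad\tau(x)=0$, so it is central after all, and the normalization therefore loses no information. Combining the three steps, the map $[(\omega',\chi')]\mapsto[(\varrho_0,\omega_0+\omega',\chi_0+\chi')]$ is a well-defined bijection $H^2_{\LD}(\g,D;\frkz(\h),K)_\frkk\to\mathrm{Ext}(\g,D;\h,K)_\frkk$. The disjoint-union formula then follows from the decomposition of ${\rm{\bf Ext_{nab}}}$ over kernels given by Proposition \ref{barrho}.
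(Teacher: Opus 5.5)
Your proposal is correct and follows essentially the same route as the paper. You fix a base cocycle with kernel $\frkk$, normalize any other representative via $\varrho-\varrho_0=\ad\circ\tau$, observe that the differences $(\omega-\omega_0,\chi-\chi_0)$ are $\frkz(\h)$-valued LieDer $2$-cocycles, and identify equivalence of normalized cocycles with central $\tau$, hence with coboundaries. The differences are minor: you build the map from $H^2_{\LD}(\g,D;\frkz(\h),K)_{\frkk}$ to $\mathrm{Ext}(\g,D;\h,K)_{\frkk}$ rather than the paper's $\Xi$ in the opposite direction, and your remark that any equivalence between cocycles sharing $\varrho_0$ is automatically central gives well-definedness and injectivity more cleanly than the paper's bookkeeping with $\tau',\tilde\tau',\tau^{*}{'}$.
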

    \begin{proof}
Fix an element $[(\hat{\g}, \hat{D})]$ in $\rm{Ext}(\g, D;\h,K)_\frkk$, it means that $[(\hat{\g}, \hat{D})]$ is the isomorphism class of $(\hat{\g}, \hat{D})$, where $(\hat{\g}, \hat{D})$ is a non-abelian extension of $(\g,D)$ by $(\h,K)$
whose associated $(\g, D)$-kernel is $\frkk$. 
By Theorem \ref{corres}, the non-abelian extensions of $(\g, D)$ by $(\h, K)$ are classified by the second non-abelian cohomology $H^2_{nab}(\g,D; \h,K)$. Let $[(\varrho, \omega, \chi)]$ be the element in $H^{2}_{nab}(\g, D;\h,K)$ corresponding to $[(\hat{\g}, \hat{D})]$. Now, let $[(\tilde{\g}, \tilde{D})]\in{\rm Ext}(\g, D;\h,K)_\frkk$ and $[(\tilde{\g}, \tilde{D})]\neq [(\hat{\g}, \hat{D})]$. Denote by $[(\tilde{\varrho}, \tilde{\omega}, \tilde{\chi})]$ the element in $H^{2}_{nab}(\g,D; \h,K)$ corresponding to $[(\tilde{\g}, \tilde{D})]$. Then $\overline{\varrho}=\overline{\tilde{\varrho}}=\frkk$, which means that there exists a linear map $\tau:\g\lon\h$ such that
$$\varrho(x)-\tilde{\varrho}(x)=\ad\tau(x), \quad \forall x\in\g.$$
Define linear maps $\omega^*:\wedge^2\g\lon\h$ and $\chi^*:\g\lon\h$ by
\begin{eqnarray}
\label{H1}\omega^*(x, y)&=&\tilde{\omega}(x, y)+\tilde{\varrho}(x)\tau(y)-\tilde{\varrho}(y)\tau(x)+[\tau(x), \tau(y)]_\h-\tau([x, y]_\g),\\
\label{H2}\chi^*(x)&=&\tilde{\chi}(x)+K(\tau(x))-\tau(D(x)),
\end{eqnarray}
for all $x, y\in\g$. Then $[(\tilde{\varrho}, \tilde{\omega}, \tilde{\chi})]=[(\varrho, \omega^*, \chi^*)]$. By \eqref{nc1} and \eqref{nc3}, it implies that
$$
[\omega(x, y)-\omega^*(x, y), u]_\h=0,\quad [\chi(x)-\chi^*(x), u]_\h=0,
$$
for all $x, y\in\g, u\in\h$,
which means that the map $\eta=\omega-\omega^*\in\Hom(\wedge^2\g, \frkz(\h))$ and $\theta=\chi-\chi^*\in\Hom(\g, \frkz(\h))$. By \eqref{nc2} and \eqref{nc4}, we have
$$
\dM^{CE}_{\rho_\frkk}(\eta)=0, \quad \dM^{CE}_{\rho_\frkk}(\theta)+\delta(\eta)=0,
$$
thus, $\partial_{\rho_\frkk}(\eta, \theta)=0$, i.e.,$(\eta, \theta)$ is a $2$-cocycle of the complex $(C^{*}_{\mathrm{LieDer}}(\g, \frkz(\h)), \partial_{\rho_\frkk})$. 

Suppose that $[(\varrho', \omega', \chi')]=[(\varrho, \omega, \chi)]$ and $
[(\tilde{\varrho}', \tilde{\omega}', \tilde{\chi}')]=[(\tilde{\varrho}, \tilde{\omega}, \tilde{\chi})]$ such that $\overline{\varrho}=\overline{\varrho'}=\overline{\tilde{\varrho}}=\overline{\tilde{\varrho}'}=\frkk$. Then there exist linear maps $\tau', \tilde{\tau}':\g\lon\h$ such that $(\varrho', \omega', \chi')$ is equivalent to $(\varrho, \omega, \chi)$, and $(\tilde{\varrho}', \tilde{\omega}', \tilde{\chi}')$ is equivalent to $(\tilde{\varrho}, \tilde{\omega}, \tilde{\chi})$.
Moreover, there exists $\tau^{*}{'}:\g\lon\h$ such that
\emptycomment{
\begin{eqnarray*}
\left\{\begin{array}{rcl}
\varrho(x)u-\varrho'(x)u&=&[\tau'(x), u]_\h,\\
\omega(x, y)-\omega'(x, y)&=&\varrho'(x)\tau'(y)-\varrho'(y)\tau'(x)+[\tau'(x), \tau'(y)]_\h-\tau'([x, y]_\g),\\
\chi(x)-\chi'(x)&=&K(\tau'(x))-\tau'(D(x)),
\end{array}\right.
\end{eqnarray*}
\begin{eqnarray*}
\left\{\begin{array}{rcl}
\tilde{\varrho}(x)u-\tilde{\varrho}'(x)u&=&[\tilde{\tau}'(x), u]_\h,\\
\tilde{\omega}(x, y)-\tilde{\omega}'(x, y)&=&\tilde{\varrho}'(x)\tilde{\tau}'(y)-\tilde{\varrho}'(y)\tilde{\tau}'(x)+[\tilde{\tau}'(x), \tilde{\tau}'(y)]_\h-\tilde{\tau}'([x, y]_\g),\\
\tilde{\chi}(x)-\tilde{\chi}'(x)&=&K(\tilde{\tau}'(x))-\tilde{\tau}'(D(x)),
\end{array}\right.
\end{eqnarray*}
for all $x, y\in\g, u\in\h$, and}
$$\varrho'(x)-\tilde{\varrho}'(x)=\ad\tau^{*}{'}(x), \quad\forall x\in\g.$$ Then $[(\varrho', \omega^{*}{'}, \chi^{*}{'})]=[(\tilde{\varrho}', \tilde{\omega}', \tilde{\chi}')]$, where
\begin{eqnarray}
\label{H3}\omega^{*}{'}(x, y)&=&\tilde{\omega}'(x, y)+\tilde{\varrho}'(x)\tau^{*}{'}(y)-\tilde{\varrho}'(y)\tau^{*}{'}(x)+[\tau^{*}{'}(x), \tau^{*}{'}(y)]_\h-\tau^{*}{'}([x, y]_\g),\\
\label{H4}\chi^{*}{'}(x)&=&\tilde{\chi}'(x)+K(\tau^{*}{'}(x))-\tau^{*}{'}(D(x)).
\end{eqnarray}
Moreover, the maps $\eta'=\omega'-\omega^{*}{'}$ and $\theta'=\chi'-\chi^{*}{'}$ satisfy $\partial_{\rho_\frkk}(\eta', \theta')=0$. By \eqref{H1}, \eqref{H2}, \eqref{H3} and \eqref{H4}, it implies that
\begin{eqnarray*}
(\eta-\eta')(x, y)=\omega(x, y)-\omega'(x, y)-\omega^{*}(x, y)+\omega^{*}{'}(x, y) =\dM^{CE}_{\rho_\frkk}(\tau'+\tau^{*}{'}-\tilde{\tau}{'}-\tau)(x, y),
\end{eqnarray*}
and 
\begin{eqnarray*}
(\theta-\theta')(x)=\chi(x)-\chi'(x)-\chi^*(x)+\chi^{*}{'}(x)=-\delta(\tau'+\tau^{*}{'}-\tilde{\tau}'-\tau)(x),
\end{eqnarray*}
which means that $(\eta, \theta)-(\eta', \theta')=\partial_{\rho_{\frkk}}(\tau'+\tau^{*}{'}-\tilde{\tau}'-\tau)$, i.e.,$[(\eta, \theta)]=[(\eta', \theta')]\in H^{2}(\g, \frkz(\h))_{\frkk}$. Therefore the map  
$\Xi:{\rm Ext}(\g,D; \h, K)_\frkk\lon H_{\LD}^{2}(\g,D; \frkz(\h),K)_\frkk$ defined by
$$
\Xi([(\tilde{\g}, \tilde{D})])=[(\eta, \theta)], 
$$
is well defined, and $\Xi([(\hat{\g}, \hat{D})])=0$.

Assume that $\Xi([(\tilde{\g}, \tilde{D})])=0,$ there exists a linear map $l:\g\lon\frkz(\h)$ such that $\partial_{\rho_{\frkk}}(l)=(\eta, \theta)$. By direct calculation, 
\emptycomment{
	Then
\begin{eqnarray*}
\left\{\begin{array}{rcl}
\varrho(x)u-\tilde{\varrho}(x)u&=&[\tau(x)+l(x), u]_\h,\\
\omega(x, y)-\tilde{\omega}(x, y)&=&\tilde{\varrho}(x)(\tau+l)(y)-\tilde{\varrho}(y)(\tau+l)(x)+[(\tau+l)(x), (\tau+l)(y)]_\h-(l+\tau)([x, y]_\g),\\
\chi(x)-\tilde{\chi}(x)&=&K((\tau+l)(x))-(\tau+l)(D(x)),
\end{array}\right.
\end{eqnarray*}
which means that }$[(\varrho, \omega, \chi)]=[(\tilde{\varrho}, \tilde{\omega}, \tilde{\chi})]$, i.e., $[(\tilde{\g}, \tilde{D})]=[(\hat{\g}, \hat{D})]$. Thus $\Xi$ is injective. 

For any $[(\eta, \theta)]\in H^2_{\LD}(\g,D; \frkz(\h),K)_\frkk$, i.e., $\partial_{\rho_\frkk}(\eta, \theta)=0$. By direct calculation, $(\varrho, \omega-\eta, \chi-\theta)$ is a non-abelian $2$-cocycle. Denote by $(\bar{\g}, \bar{D})$ the non-abelian extension of $(\g, D)$ by $(\h, K)$ corresponding to $(\varrho, \omega-\eta, \chi-\theta)$. We have $\Xi([(\bar{\g}, \bar{D})])=[(\eta, \theta)]$, which means that $\Xi$ is surjective. Thus,
the set $\mathrm{Ext}(\g,D;\h,K)_{\frkk}$ is isomorphic to $H^2_{\LD}(\g,D;\frkz(\g),K)_{\frkk}$, which implies 
\begin{equation*}
      {\rm{\bf Ext_{nab}}}(\g,D;\h,K)=\bigsqcup_{\text{Integrable}(\g,D)\text{-kernels}}H^2_{\LD}(\g,D;\frkz(\h),K)_{\frkk}.
    \end{equation*}
We finish the proof.
    \end{proof}

\begin{rmk}
    By the above Theorem, we see that if $\mathrm{Ext}(\g,D;\h,K)_{\frkk}$ is nonempty, then $\mathrm{Ext}(\g,D;\h,K)_{\frkk}$ is an affine space with the associated vector space $H^2(\g,\frkz(\h))_{\frkk}$.
\end{rmk}

\subsection{Integrable $(\g, D)$-kernels and third cohomology groups} In this subsection, we investigate the second question in Question \ref{que12}:~~to find a criterion for determining for which $(\g,D)$-kernels the set $\mathrm{Ext}(\g,\h)_{\frkk}$ is nonempty. 

Let $(\g, D)$ and $(\h, K)$ be \LD pairs, and $\frkk:\g\lon\Out(\h)$ be a $(\g,D)$-kernel for $(\h,K)$. We can always choose a linear map $\varrho:\g\lon\Der(\h)$ such that $\overline{\varrho}=\frkk$, where $\overline{\varrho}$ denotes the image of $\varrho$ in $\Out(\h)$. Since $\frkk:\g\lon\Out(\h)$ is a homomorphism and
$$
[\bar{K}, \frkk(x)]_{\Out(\h)}=\frkk(D(x)),
$$
it follows that 
\begin{equation*}
    [\varrho(x),\varrho(y)]_{\Der(\h)}-\varrho[x,y]_{\g}\in \ad(\h),\quad\text{and}\quad
    K\circ\varrho(x)-\varrho(D(x))-\varrho(x)\circ K\in \ad(\h),
\end{equation*}
for all $x, y\in\g$. Therefore, there exist linear maps $\omega\in C^2(\g,\h)$ and $\chi\in C^1(\g,\h)$  such that
\begin{equation}\label{1defiomega}
    \ad \omega(x,y)=[\varrho(x),\varrho(y)]_{\Der(\h)}-\varrho([x,y]_{\g}),
\end{equation}
for all $x,y\in\g$, and 
\begin{equation}\label{1defichi}
    \ad\chi(x)=K\circ\varrho(x)-\varrho(Dx)-\varrho(x)\circ K,
\end{equation}
for all $x\in\g$. However, in general, equations (\ref{nc2}) and (\ref{nc4}) may not be satisfied, in other words, $(\varrho,\omega,\chi)$ may not be a non-abelian 2-cocycle.

Given a linear map $\varrho:\g\lon\Der(\h)$ such that $\overline{\varrho}=\frkk$, we can define a formal coboundary operator $\dM^F_{\varrho}:\oplus_{i=1}^{+\infty}\Hom(\wedge^{i}{\g},\h)\lon\oplus_{i=1}^{+\infty}\Hom(\wedge^{i}{\g},\h)$ by
\begin{eqnarray*}\label{formald}
&&\dM^F_{\varrho}(\alpha_l)(x_1, \cdots, x_{l+1})\\
\nonumber&=&\sum_{i=1}^{l+1}(-1)^{i+1}\varrho(x_i)\alpha_l(x_1, \cdots, \hat{x}_i, \cdots, x_{l+1})+\sum_{i<j}(-1)^{i+j}\alpha_l(x_1, \cdots,\hat{x}_i, \cdots, \hat{x}_j, \cdots, x_{l+1}).
\end{eqnarray*}
Note that, in general, $(\oplus_{i=1}^{+\infty}\Hom(\wedge^{i}{\g},\h),\dM^F_{\varrho})$ is not a cochain complex, but when $\varrho$ is a representation of the Lie algebra $\g$,  $\dM^F_\varrho$ is the Chevalley-Eilenberg coboundary operator.
In the following, we will give necessary and sufficient conditions for the triple $(\varrho,\omega,\chi)$ satisfying equations (\ref{nc2}) and (\ref{nc4}) in terms of $\dM^F_\varrho$.

Firstly, we recall the definition of the cup product on $\oplus_{n\geq 1}\Hom(\wedge^n\g,\h)$, where $\g$ and $\h$ are Lie algebras.
\begin{defi}{\rm (\cite{NR1})}
    Let $(\g, [\cdot, \cdot]_\g)$ and $(\h, [\cdot, \cdot]_\h)$ be Lie algebras. Denote by $C(\g,\h)=\oplus_{n\geq 1}\Hom(\wedge^n\g,\h)$. The {\bf cup product} $[\cdot,\cdot]_{\smile}:C(\g, \h)\times C(\g, \h)\lon C(\g, \h)$ is defined as follows
    \begin{eqnarray}
        [\alpha,\beta]_{\smile}(x_{1},\cdots,x_{p+q}):=\sum_{\sigma\in S(p,q)} (-1)^\sigma[\alpha(x_{\sigma(1)},\cdots,x_{\sigma(p)}),\beta(x_{\sigma(p+1)},\cdots,x_{\sigma(p+q)})]_{\h},
    \end{eqnarray}
    where $\alpha\in C^p(\g,\h),\ \beta\in C^q(\g,\h)$ and $S(p,q)$ the set of all $(p,q)$-shuffles.
\end{defi}
In particular, for $p=1$ and $q=2$ in the above definition, we have 
\begin{equation*}
    [\alpha,\alpha]_{\smile}(x,y)=[\alpha(x),\alpha(y)]_{\h}-[\alpha(y),\alpha(x)]_{\h}=2[\alpha(x),\alpha(y)]_{\h},
\end{equation*}
and
\begin{equation*}
    [\alpha,\beta]_{\smile}(x,y,z)=[\alpha(x),\beta(y,z)]_{\h}+[\alpha(y),\beta(z,x)]_{\h}+[\alpha(z),\beta(x,y)]_{\h},
\end{equation*}
as well as
\begin{equation*}
    [\beta,\alpha]_{\smile}(x,y,z)=[\beta(x,y),\alpha(z)]_{\h}+[\beta(y,z),\alpha(x)]_{\h}+[\beta(z,x),\alpha(y)]_{\h}=-[\alpha,\beta]_{\smile}.
\end{equation*}
By the {\rm Jacobi} identity, we obtain
\begin{equation}\label{rrr0}
[\alpha,[\alpha,\alpha]_{\smile}]_{\smile}=0.
\end{equation}
\begin{lem}
    For any $\varepsilon \in \Hom(\g,\h)$, we have 
    \begin{eqnarray}\label{dsmile}
        \dM^F_{\varrho}[\varepsilon,\varepsilon]_{\smile}&=&2[\dM^F_{\varrho}\varepsilon,\varepsilon]_{\smile}=-2[\varepsilon,\dM^F_{\varrho}\varepsilon]_{\smile},\\
    \label{deltasmile}
        \delta([\varepsilon,\varepsilon]_{\smile})&=&[\delta(\varepsilon),\varepsilon]_{\smile}+[\varepsilon,\delta(\varepsilon)]_{\smile}=2[\varepsilon,\delta(\varepsilon)]_{\smile},
    \end{eqnarray}
    where $\delta$ is given by \eqref{deeqsigma}.
    \end{lem}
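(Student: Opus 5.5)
The lemma reduces to direct evaluation on arguments: both sides of each identity are cochains of low degree, so I would compute them explicitly and compare term by term. The only structural inputs are that each $\varrho(x)$ lies in $\Der(\h)$ and that $K\in\Der(\h)$. No representation property of $\varrho$ is needed, which is why the identities hold even though $\dM^F_\varrho$ is only a formal operator.

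I read the second sum in the definition of $\dM^F_\varrho$ as $\sum_{i<j}(-1)^{i+j}\alpha_l([x_i,x_j]_\g, x_1,\cdots,\hat{x}_i,\cdots,\hat{x}_j,\cdots,x_{l+1})$, the usual Chevalley--Eilenberg form; the bracket is evidently missing from the displayed formula. Throughout I use two symmetry properties of $[\cdot,\cdot]_\smile$ on low degrees. The first, already noted in the paper, is $[\beta,\alpha]_\smile=-[\alpha,\beta]_\smile$ for $\alpha\in C^1(\g,\h)$ and $\beta\in C^2(\g,\h)$. The second is the analogous rule for $\alpha,\beta\in C^1(\g,\h)$: since $[\alpha,\beta]_\smile(x,y)=[\alpha(x),\beta(y)]_\h-[\alpha(y),\beta(x)]_\h$, skew-symmetry of $[\cdot,\cdot]_\h$ gives $[\beta,\alpha]_\smile=[\alpha,\beta]_\smile$.

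\emph{The identity \eqref{dsmile}.} By the first symmetry property, the second equality is immediate once the first is proved. For the first, I write $[\varepsilon,\varepsilon]_\smile(x,y)=2[\varepsilon(x),\varepsilon(y)]_\h$ and apply $\dM^F_\varrho$ in degree $2$. This gives
$$\dM^F_\varrho[\varepsilon,\varepsilon]_\smile(x,y,z)=2\sum_{\mathrm{cyc}}\Big(\varrho(x)[\varepsilon(y),\varepsilon(z)]_\h-[\varepsilon([x,y]_\g),\varepsilon(z)]_\h\Big).$$
Since $\varrho(x)\in\Der(\h)$, I expand $\varrho(x)[\varepsilon(y),\varepsilon(z)]_\h=[\varrho(x)\varepsilon(y),\varepsilon(z)]_\h+[\varepsilon(y),\varrho(x)\varepsilon(z)]_\h$. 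I rewrite the second summand as $-[\varrho(x)\varepsilon(z),\varepsilon(y)]_\h$. A cyclic relabelling $(x,y,z)\mapsto(y,z,x)$ inside the cyclic sum then turns it into $-[\varrho(y)\varepsilon(x),\varepsilon(z)]_\h$. Hence the whole expression equals
$$2\sum_{\mathrm{cyc}}\Big[\varrho(x)\varepsilon(y)-\varrho(y)\varepsilon(x)-\varepsilon([x,y]_\g),\ \varepsilon(z)\Big]_\h=2\sum_{\mathrm{cyc}}[\dM^F_\varrho\varepsilon(x,y),\varepsilon(z)]_\h.$$
By the formula for $[\beta,\alpha]_\smile$ with $\beta\in C^2$ and $\alpha\in C^1$, the right-hand side is $2[\dM^F_\varrho\varepsilon,\varepsilon]_\smile(x,y,z)$, as required.

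\emph{The identity \eqref{deltasmile}.} I apply \eqref{deeqsigma} with $n=2$ and $T=K$:
$$\delta[\varepsilon,\varepsilon]_\smile(x,y)=2\Big([\varepsilon(Dx),\varepsilon(y)]_\h+[\varepsilon(x),\varepsilon(Dy)]_\h-K[\varepsilon(x),\varepsilon(y)]_\h\Big).$$
Because $K$ is a derivation of $\h$, this equals
$$2\Big([\varepsilon(Dx)-K\varepsilon(x),\varepsilon(y)]_\h+[\varepsilon(x),\varepsilon(Dy)-K\varepsilon(y)]_\h\Big)=2\Big([\delta\varepsilon(x),\varepsilon(y)]_\h+[\varepsilon(x),\delta\varepsilon(y)]_\h\Big).$$
The right-hand side is $([\delta\varepsilon,\varepsilon]_\smile+[\varepsilon,\delta\varepsilon]_\smile)(x,y)$. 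Expanding each of the two cup products by its definition and comparing terms confirms this, and shows that each cup product accounts for one of the two brackets.

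Finally, the second symmetry property gives $[\delta\varepsilon,\varepsilon]_\smile=[\varepsilon,\delta\varepsilon]_\smile$, so the sum equals $2[\varepsilon,\delta\varepsilon]_\smile$. The only delicate step in the whole argument is the cyclic relabelling in the proof of \eqref{dsmile}, where the signs must be tracked carefully; the rest is routine bookkeeping.
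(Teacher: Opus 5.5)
Your proof is correct. For \eqref{deltasmile} it is the paper's own argument. You expand $\delta[\varepsilon,\varepsilon]_\smile$ with $T=K$ and use $K\in\Der(\h)$ to regroup into $\delta\varepsilon(x)=\varepsilon(Dx)-K\varepsilon(x)$. You then finish with the symmetry $[\delta\varepsilon,\varepsilon]_\smile=[\varepsilon,\delta\varepsilon]_\smile$ on $C^1(\g,\h)$.

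For \eqref{dsmile} the routes differ. The paper gives no computation and simply invokes a Leibniz-type rule for the formal differential with respect to the cup product, namely [HN, Lemma 7.6.15]. You instead verify the degree-two case by hand, using:
\begin{itemize}
\item the derivation property of each $\varrho(x)$;
\item one cyclic relabelling;
\item the formula for $[\beta,\alpha]_\smile$ with $\beta\in C^2$, $\alpha\in C^1$.
\end{itemize}
The signs check out, including that the bracket terms of $\dM^F_\varrho f$ for $f\in C^2(\g,\h)$ contribute $-\sum_{\mathrm{cyc}}f([x,y]_\g,z)$. The citation is shorter and gives the rule in all degrees. Your computation is self-contained and makes explicit that only $\varrho(\g)\subset\Der(\h)$ is used, not any homomorphism property of $\varrho$. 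That is exactly the situation in which the lemma is later applied. Your remark that the displayed definition of $\dM^F_\varrho$ is missing $[x_i,x_j]_\g$ is also correct.

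One small inaccuracy: the two cup products do not each account for one of the two brackets. Each of $[\delta\varepsilon,\varepsilon]_\smile(x,y)$ and $[\varepsilon,\delta\varepsilon]_\smile(x,y)$ separately equals $[\delta\varepsilon(x),\varepsilon(y)]_\h+[\varepsilon(x),\delta\varepsilon(y)]_\h$. A single term such as $2[\delta\varepsilon(x),\varepsilon(y)]_\h$ is not skew-symmetric in $x,y$ in general. This is precisely why the two cup products coincide, which is what your final step uses.
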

 \begin{proof}
    By \cite[Lemma 7.6.15]{HN}, we have 
    $
     \dM^F_{\varrho}[\varepsilon,\varepsilon]_{\smile}=2[\dM^F_{\varrho}\varepsilon,\varepsilon]_{\smile}=-2[\varepsilon,\dM^F_{\varrho}\varepsilon]_{\smile}.
     $
  For any $x, y\in\g$,
        \begin{eqnarray*}
            \delta([\varepsilon,\varepsilon]_{\smile})(x,y)&=&[\varepsilon,\varepsilon]_{\smile}(D(x),y)+[\varepsilon,\varepsilon]_{\smile}(x,D(y))-K([\varepsilon,\varepsilon]_{\smile}(x,y))\\
            %&=&[\varepsilon(D(x)),\varepsilon(y)]_{\h}-[\varepsilon(y),\varepsilon(D(x))]_{\h}+[\varepsilon(x),\varepsilon(D(y))]_{\h}-[\varepsilon(D(y)),\varepsilon(x)]_{\h}\\
            %&&-K([\varepsilon(x),\varepsilon(y)]_{\h})+K([\varepsilon(y),\varepsilon(x)]_{\h})\\
            &=&2[\varepsilon(D(x)),\varepsilon(y)]_{\h}+2[\varepsilon(x),\varepsilon(D(y))]_{\h}
            -[K(\varepsilon(x)),\varepsilon(y)]_{\h}-[\varepsilon(x),K(\varepsilon(y))]_{\h}\\
            &&+[K(\varepsilon(y)),\varepsilon(x)]_{\h}+[\varepsilon(y),K(\varepsilon(x))]_{\h}\\
            %&=&[\varepsilon(D(x))-K(\varepsilon(x)),\varepsilon(y)]_{\h}-[\varepsilon(D(y))-K(\varepsilon(y)),\varepsilon(x)]_{\h}\\
            %&&+[\varepsilon(x),\varepsilon(D(y))-K(\varepsilon(y))]_{\h}-[\varepsilon(y),\varepsilon(D(x))-K(\varepsilon(x))]_{\h}\\
            &=&[\delta(\varepsilon),\varepsilon]_{\smile}(x,y)+[\varepsilon,\delta(\varepsilon)]_{\smile}(x,y).
        \end{eqnarray*}
      By direct calculation, we obtain $[\delta(\varepsilon), \varepsilon]_{\smile}=[\varepsilon, \delta(\varepsilon)]_{\smile}$.
      Hence, $$\delta([\varepsilon,\varepsilon]_{\smile})=[\delta(\varepsilon),\varepsilon]_{\smile}+[\varepsilon,\delta(\varepsilon)]_{\smile}=2[\varepsilon,\delta(\varepsilon)]_{\smile}.$$
      We finish the proof.
    \end{proof}
\begin{lem}
    With the above notations, for any $\varepsilon\in \Hom(\g,\h)$, we have
    \begin{equation}\label{danddelta}
        \delta(\dM^F_{\varrho}\varepsilon)-\dM^F_{\varrho}(\delta\varepsilon)=-[\varepsilon,\chi]_{\smile}.
    \end{equation}
    If $\varrho:\g\lon\Der(\h)$ is a representation of the \LD pair $(\g, D)$ on $(\h, K)$, i.e.,
    $$
     K(\varrho(x)u)=\varrho(D(x))u+\varrho(x)K(u), \quad \forall x\in\g, ~~u\in\h,
    $$
    we obtain
    \begin{equation}\label{dcomdelta}
        \delta(\dM^F_{\varrho}\varepsilon)-\dM^F_{\varrho}(\delta\varepsilon)=0.
    \end{equation}
    \begin{proof}
    For any $x, y\in\g$, by \eqref{1defichi}, we have
        \begin{eqnarray*}
           &&\delta(\dM^F_{\varrho})\varepsilon(x,y)-\dM^F_{\varrho}(\delta\varepsilon)(x,y)\\
&=&\dM^F_{\varrho}\varepsilon(D(x),y)+\dM^F_{\varrho}\varepsilon(x,D(y))-K(\dM^F_{\varrho}\varepsilon(x,y))-\varrho(x)\delta\varepsilon(y)+\varrho(y)\delta\varepsilon(x)+\delta\varepsilon([x,y]_{\g})\\
           %&=&\varrho(D(x))\varepsilon(y)-\varrho(y)\varepsilon(D(x))-\varepsilon([D(x),y]_{\g})+\varrho(x)\varepsilon(D(y))-\varrho(D(y))\varepsilon(x)-\varepsilon([x,D(y)]_{\g})\\
           %&&-K(\varrho(x)\varepsilon(y))+K(\varrho(y)\varepsilon(x))+K(\varepsilon([x,y]_{\g}))-\varrho(x)\varepsilon(D(y))+\varrho(x)K(\varepsilon(y))+\varrho(y)\varepsilon(D(x))\\
          % &&-\varrho(y)K(\varepsilon(x))+\varepsilon(D([x,y]_{\g}))-K(\varepsilon([x,y]_{\g}))\\
           &=&\varrho(D(x))\varepsilon(y)-K(\varrho(x)\varepsilon(y))+\varrho(x)K(\varepsilon(y))-\varrho(D(y))\varepsilon(x)+K(\varrho(y))\varepsilon(x)-\varrho(y)K(\varepsilon(x))\\
           &=&-[\chi(x),\varepsilon(y)]_{\h}+[\chi(y),\varepsilon(x)]_{\h}\\
           &=&-[\varepsilon,\chi]_{\smile}(x,y),
        \end{eqnarray*}
        which implies $\delta(\dM^F_{\varrho}\varepsilon)-\dM^F_{\varrho}(\delta\varepsilon)=-[\varepsilon,\chi]_{\smile}$. If $\varrho:\g\lon\Der(\h)$ is a representation of the \LD pair $(\g, D)$ on $(\h, K)$, we obtain
        \begin{eqnarray*}
            &&\delta(\dM^F_{\varrho})\varepsilon(x,y)-\dM^F_{\varrho}(\delta\varepsilon)(x,y)\\
            &=&\varrho(D(x))\varepsilon(y)-K(\varrho(x)\varepsilon(y))+\varrho(x)K(\varepsilon(y))-\varrho(D(y))\varepsilon(x)+K(\varrho(y)\varepsilon(x))-\varrho(y)K(\varepsilon(x))=0,
        \end{eqnarray*}
        for all $x,y\in\g$. 
    \end{proof}
\end{lem}

Let $(\g, D)$ and $(\h, K)$ be \LD pairs and $\frkk:\g\lon\Out(\h)$ be a $(\g,D)$-kernel for $(\h,K)$. By Proposition \ref{repkk}, the linear map $\rho_\frkk:\g\lon\gl(\frkz(\h))$ defined by
$$
\rho_\frkk(x)u=\frkk(x)(u), \quad \forall x\in\g, u\in\frkz(\h),
$$
is a representation of $(\g, D)$ on $(\frkz(\h), K)$. Denote by $Z^{n}_{\LD}(\g,D; \frkz(\h), K)_\frkk$ the set of $n$-th cocycles of $(\g, D)$ with coefficients in the representation $(\rho_\frkk, K)$, and $H^{n}_{\LD}(\g, D;\frkz(\h), K)_{\frkk}$ the $n$-th cohomology group. Choose a linear map $\varrho:\g\lon\Der(\h)$ such that $\overline{\varrho}=\frkk$.
\begin{pro}\label{omegachiclosed}
With the above notations, $(\dM^F_{\varrho}\omega, \dM^F_{\varrho}\chi+\delta\omega)\in Z^3_{\LD}(\g,D;\frkz(\h),K)_{\frkk}$, i.e., $$\partial_{\rho_\frkk}(\dM^F_{\varrho}\omega, \dM^F_{\varrho}\chi+\delta\omega)=0,$$
where $\omega$ and $\chi$ given by \eqref{1defiomega} and \eqref{1defichi}.
\end{pro}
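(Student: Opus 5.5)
The plan is to check two things. First, both components of $(\dM^F_{\varrho}\omega, \dM^F_{\varrho}\chi+\delta\omega)$ take values in $\frkz(\h)$, so that this is a genuine $3$-cochain of $C^*_{\LD}(\g,\frkz(\h))$ with respect to $\rho_\frkk$. Second, $\partial_{\rho_\frkk}$ kills it. By \eqref{deficeo} with $n=3$, the second condition reads
$$\dM^{CE}_{\rho_\frkk}(\dM^F_\varrho\omega)=0,\qquad \dM^{CE}_{\rho_\frkk}(\dM^F_\varrho\chi+\delta\omega)-\delta(\dM^F_\varrho\omega)=0 .$$
Throughout, for a central-valued cochain $\alpha$ we use $\dM^{CE}_{\rho_\frkk}\alpha=\dM^F_\varrho\alpha$, since $\varrho(x)|_{\frkz(\h)}=\rho_\frkk(x)$. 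The tools are \eqref{1defiomega}, \eqref{1defichi}, the identity $\ad(T u)=[T,\ad u]$ for $T\in\Der(\h)$, and the Jacobi identity in $\Der(\h)$.

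Step 1 (centrality). I would apply $\ad$ to $\dM^F_\varrho\omega(x,y,z)$. Each term $\ad(\varrho(x)\omega(y,z))$ becomes $[\varrho(x),[\varrho(y),\varrho(z)]-\varrho([y,z])]$. Each term $\ad\omega([x,y],z)$ becomes $[\varrho([x,y]),\varrho(z)]-\varrho([[x,y],z])$. Summing cyclically, the triple brackets cancel by Jacobi in $\Der(\h)$, the terms $\varrho(\text{Jacobiator})$ vanish, and the mixed terms $[\varrho(x),\varrho([y,z])]$ cancel pairwise. Hence $\ad(\dM^F_\varrho\omega)=0$.

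For the second component I would similarly apply $\ad$ to
$$(\dM^F_\varrho\chi+\delta\omega)(x,y)=\varrho(x)\chi(y)-\varrho(y)\chi(x)-\chi([x,y])+\omega(Dx,y)+\omega(x,Dy)-K\omega(x,y).$$
Here I would substitute $\ad\chi(x)=[K,\varrho(x)]-\varrho(Dx)$ and $\ad(K\omega(x,y))=[K,\ad\omega(x,y)]$, expand with \eqref{1defiomega}, and use that $D$ is a derivation. All terms should cancel, again by Jacobi, applied this time to $[K,[\varrho(x),\varrho(y)]]$.

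Step 2 (first component of the cocycle condition). I would first establish the formal identity
$$\dM^F_\varrho\dM^F_\varrho\alpha=[\omega,\alpha]_{\smile}\quad(\text{up to a fixed sign}).$$
This is the curvature formula from \cite{HN}, and it follows directly from \eqref{1defiomega}. Then $\dM^{CE}_{\rho_\frkk}(\dM^F_\varrho\omega)=\dM^F_\varrho\dM^F_\varrho\omega=\pm[\omega,\omega]_\smile$. This vanishes because, for $\omega$ of degree $2$, the $(2,2)$-shuffle terms pair up as $[\omega(a,b),\omega(c,d)]+[\omega(c,d),\omega(a,b)]=0$.

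Step 3 (second component; the main obstacle). I would compute $\dM^F_\varrho(\delta\omega)-\delta(\dM^F_\varrho\omega)$ in degree $2\to3$, as in the proof of \eqref{danddelta}. The commutator of $\delta$ with $\dM^F_\varrho$ should produce only the terms $[\chi,\omega]_\smile$, with sign opposite to \eqref{danddelta}, since it comes from $K\varrho(x)-\varrho(Dx)-\varrho(x)K=\ad\chi(x)$. Separately, $\dM^F_\varrho\dM^F_\varrho\chi=\pm[\omega,\chi]_\smile$ by the curvature formula of Step 2. The claim is that these two contributions cancel, and tracking their signs consistently is where I expect the difficulty. I would fix the sign convention once, check it on $\dM^F_\varrho\dM^F_\varrho\varepsilon$ for $\varepsilon\in\Hom(\g,\h)$ in low degree, and then verify the cancellation by evaluating both sides on $(x,y,z)$ directly if the cup-product bookkeeping becomes unclear.
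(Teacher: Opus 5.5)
Your proposal is correct. Step 1 (centrality) is the same computation as the paper's, but Steps 2 and 3 take a genuinely different and more direct route to the cocycle condition.

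The sign bookkeeping you worry about in Step 3 closes up cleanly, using the same two identities in every degree. For any $\alpha\in\Hom(\wedge^p\g,\h)$ one has exactly $(\dM^F_{\varrho})^2\alpha=[\omega,\alpha]_{\smile}$, with no $\pm$: the shuffle $(i,j\,|\,\cdots)$ has sign $(-1)^{i+j+1}$, which is the sign of $\ad\omega(x_i,x_j)$ in the curvature expansion. The computation behind \eqref{danddelta} also works in any degree. Since $D$ is a derivation, the bracket terms of $\dM^F_{\varrho}$ commute with $\delta$, so only the $\varrho$-terms contribute, and this gives $\dM^F_{\varrho}\delta\alpha-\delta\dM^F_{\varrho}\alpha=[\chi,\alpha]_{\smile}$.

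Hence the second component of $\partial_{\rho_\frkk}$ is $[\omega,\chi]_{\smile}+[\chi,\omega]_{\smile}$. This vanishes by the graded antisymmetry $[\alpha,\beta]_{\smile}=-(-1)^{pq}[\beta,\alpha]_{\smile}$ with $(p,q)=(2,1)$. Your vanishing of $[\omega,\omega]_{\smile}$ in Step 2 is the same fact for $(p,q)=(2,2)$.

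The paper argues differently. It pulls back along $q:\tilde\g=\frkk^*\Der(\h)\to\g$, where $\varrho\circ q=\tilde\rho+\ad\circ\tilde\varepsilon$ and $\tilde\rho(x,K')=K'$ is a genuine representation of the LieDer pair $(\tilde\g,\tilde D)$ on $(\h,K)$. There it writes the pulled-back element as $\partial_{\tilde\rho}(\tilde\omega-\frac12[\tilde\varepsilon,\tilde\varepsilon]_{\smile},\tilde\chi)$. This is closed because $\partial_{\tilde\rho}^2=0$, and injectivity of $q^*$ transfers closedness back to $\g$.

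Your argument is shorter and self-contained, essentially a two-line Bianchi identity. It avoids building the auxiliary LieDer pair and checking that $q^*$ is a cochain map. It also avoids the point, glossed over in the paper, that the primitive lives in the $\h$-valued complex rather than the $\frkz(\h)$-valued one. The paper's route is more conceptual: it shows the obstruction class becomes trivial after pulling back to $\frkk^*\Der(\h)$, following the Hilgert--Neeb treatment of $\g$-kernels, and it trades sign bookkeeping for structure.
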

 \begin{proof}
     See the Appendix A: Proof of Proposition \ref{omegachiclosed}.
 \end{proof}  

\begin{thm}\label{defich}
    The cohomology class $\mathrm{ch}(\frkk):=[(\dM^F_{\varrho}\omega,\dM^F_{\varrho}\chi+\delta\omega)]\in H^3(\g,D;\frkz(\h),K)_{\frkk}$ is well defined.
    \end{thm}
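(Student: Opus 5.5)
The class depends on two choices: the lift $\varrho:\g\lon\Der(\h)$ with $\overline{\varrho}=\frkk$, and the maps $\omega,\chi$ solving \eqref{1defiomega} and \eqref{1defichi} for that $\varrho$. By Proposition \ref{omegachiclosed}, $(\dM^F_{\varrho}\omega,\dM^F_{\varrho}\chi+\delta\omega)$ is already a $3$-cocycle for every such choice. So I only need to show that the class does not change when the choices change. I will do this in two steps.

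\emph{Step 1: fix $\varrho$ and vary $\omega,\chi$.} Since $\ad$ has kernel $\frkz(\h)$, any other solutions have the form $\omega'=\omega+\eta$ and $\chi'=\chi+\theta$ with $\eta\in\Hom(\wedge^2\g,\frkz(\h))$ and $\theta\in\Hom(\g,\frkz(\h))$. On $\frkz(\h)$-valued cochains, $\varrho(x)$ acts as $\rho_\frkk(x)$, so $\dM^F_\varrho$ restricts to $\dM^{CE}_{\rho_\frkk}$. Hence the cocycle changes by $(\dM^{CE}_{\rho_\frkk}\eta,\ \dM^{CE}_{\rho_\frkk}\theta+\delta\eta)$. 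By \eqref{deficeo} with $n=2$, this is exactly $\partial_{\rho_\frkk}(\eta,\theta)$, a coboundary.

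\emph{Step 2: vary $\varrho$.} Any other lift is $\varrho'=\varrho+\ad\circ\tau$ for some linear $\tau:\g\lon\h$. I will write down explicit admissible choices for $\varrho'$:
\[
\omega'=\omega+\dM^F_\varrho\tau+\tfrac12[\tau,\tau]_{\smile},\qquad \chi'=\chi-\delta\tau .
\]
First I check that these satisfy \eqref{1defiomega} and \eqref{1defichi} for $\varrho'$. For $\chi'$: expanding $K\circ\varrho'(x)-\varrho'(Dx)-\varrho'(x)\circ K$ gives $\ad(\chi(x)+K\tau(x)-\tau(Dx))=\ad(\chi-\delta\tau)(x)$. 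For $\omega'$ the check is a direct expansion of $[\varrho'(x),\varrho'(y)]-\varrho'([x,y])$, using $[\varrho(x),\ad u]=\ad(\varrho(x)u)$.

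Next I use the relation $\dM^F_{\varrho'}\alpha=\dM^F_\varrho\alpha+[\tau,\alpha]_{\smile}$ on $\Hom(\wedge^\bullet\g,\h)$.

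\emph{Second component.} I need to show it is unchanged:
\[
\dM^F_{\varrho'}\chi'+\delta\omega'=\dM^F_\varrho\chi+[\tau,\chi]_{\smile}-\dM^F_\varrho\delta\tau-[\tau,\delta\tau]_{\smile}+\delta\omega+\delta\dM^F_\varrho\tau+\tfrac12\delta[\tau,\tau]_{\smile}.
\]
By \eqref{danddelta}, $\delta\dM^F_\varrho\tau-\dM^F_\varrho\delta\tau=-[\tau,\chi]_{\smile}$. By \eqref{deltasmile}, $\tfrac12\delta[\tau,\tau]_{\smile}=[\tau,\delta\tau]_{\smile}$. Everything then cancels except $\dM^F_\varrho\chi+\delta\omega$.

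\emph{First component.} I need $\dM^F_{\varrho'}\omega'=\dM^F_\varrho\omega$. Expanding gives
\[
\dM^F_\varrho\omega+\dM^F_\varrho\dM^F_\varrho\tau+\tfrac12\dM^F_\varrho[\tau,\tau]_{\smile}+[\tau,\omega]_{\smile}+[\tau,\dM^F_\varrho\tau]_{\smile}+\tfrac12[\tau,[\tau,\tau]_{\smile}]_{\smile}.
\]
The last term vanishes by \eqref{rrr0}. By \eqref{dsmile}, $\tfrac12\dM^F_\varrho[\tau,\tau]_{\smile}$ cancels $[\tau,\dM^F_\varrho\tau]_{\smile}$. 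What remains is to show $\dM^F_\varrho\dM^F_\varrho\tau=-[\tau,\omega]_{\smile}$, i.e.\ $(\dM^F_\varrho)^2\tau=[\omega,\tau]_{\smile}$. This should follow from $(\dM^F_\varrho)^2\tau(x,y,z)=\sum_{\rm cyc}\big([\varrho(x),\varrho(y)]-\varrho([x,y])\big)\tau(z)$ together with \eqref{1defiomega}; this is the analogue of the corresponding statement in \cite{HN}.

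\emph{Combining.} Any other admissible pair for $\varrho'$ differs from $(\omega',\chi')$ by $\frkz(\h)$-valued maps, so Step 1 applies to it. Hence the class $\mathrm{ch}(\frkk)$ is independent of all choices.

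\emph{Main obstacle.} I expect the hardest part to be the sign bookkeeping in the first component, namely the identity $(\dM^F_\varrho)^2\tau=[\omega,\tau]_{\smile}$ and its cancellation against $[\tau,\omega]_{\smile}$. I would verify it by evaluating both sides directly on a triple $(x,y,z)$.
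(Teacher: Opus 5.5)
Your proposal is correct and follows essentially the same route as the paper. The paper also first absorbs $\frkz(\h)$-valued changes of $(\omega,\chi)$ into $\partial_{\rho_\frkk}(\omega'-\omega,\chi'-\chi)$, and then, for $\varrho'=\varrho+\ad\circ r$, takes the same explicit choices $\omega'=\omega+\dM^F_\varrho r+\frac12[r,r]_{\smile}$ and $\chi'=\chi-\delta r$, showing the cocycle is literally unchanged via \eqref{dsmile}, \eqref{deltasmile}, \eqref{danddelta}, \eqref{rrr0} and the identity $(\dM^F_\varrho)^2 r=[\omega,r]_{\smile}$. The paper quotes that identity from the literature rather than checking it, but your cyclic-sum computation verifies it correctly.
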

    \begin{proof}
        If $\varrho$ is fixed and $(\omega',\chi')\in C^2_{\LD}(\g,\h)$ also satisfies 
\begin{eqnarray*}
    \ad \omega'(x,y)=[\varrho(x),\varrho(y)]_{\Der(\h)}-\varrho([x,y]_{\g}),\quad
  \ad\chi'(x)=K\circ\varrho(x)-\varrho(D(x))-\varrho(x)\circ K,
\end{eqnarray*}
then $(\omega'-\omega,\chi'-\chi)\in C^2_{\LD}(\g,\frkz(\h))_{\frkk}$ as shown in the proof of Theorem \ref{gdk}. Thus, we have
        \begin{eqnarray*}
(\dM^F_{\varrho}\omega',\dM^F_{\varrho}\chi'+\delta\omega')
       % &=&\Big(\dM^F_{\varrho}\omega+\dM^F_{\varrho}(\omega'-\omega),\dM^F_{\varrho}\chi+\delta\omega+\dM^F_{\varrho}(\chi'-\chi)+\delta(\omega'-\omega)\Big)\\
        %&=&(\dM^F_{\varrho}\omega,\dM^F_{\varrho}\chi+\delta\omega)+(\dM^{F}_{\varrho}(\omega'-\omega),\dM^{F}_{\varrho}(\chi'-\chi)+\delta(\omega'-\omega))\\
        &=&(\dM^F_{\varrho}\omega,\dM^F_{\varrho}\chi+\delta\omega)+(\dM^{CE}_{\rho_\frkk}(\omega'-\omega),\dM^{CE}_{\rho_\frkk}(\chi'-\chi)+\delta(\omega'-\omega))\\
        &=&(\dM^F_{\varrho}\omega,\dM^F_{\varrho}\chi+\delta\omega)+\partial_{\rho_\frkk}(\omega'-\omega,\chi'-\chi).
        \end{eqnarray*}
        Hence, the cohomology class $\mathrm{ch}(\frkk)$ is independent of the choice of the pair $(\omega,\chi)$.

        If $\varrho':\g\lon\Der(\h)$ is another linear map with $\overline{\varrho'(x)}=\frkk(x),$ for all $x$ in $\g$, then $$\varrho'=\varrho+\ad\circ r,$$ for some linear map $r:\g\lon\h$. Define $\omega':\wedge^{2}\g\lon\h$ and $\chi':\g\lon\h$ by
        \begin{equation*}
            \omega'=\omega+\dM^F_{\varrho}r+\frac{1}{2}[r,r]_{\smile}, \quad \quad \chi'=\chi-\delta(r).
        \end{equation*}
        By direct calculation, we obtain equations
        \begin{eqnarray*}
            \ad(\omega'(x,y))=[\varrho'(x),\varrho'(y)]_{\Der(\h)}-\varrho'([x,y]_{\g}),\quad 
            \ad(\chi'(x))=K\circ\varrho'(x)-\varrho'(D(x))-\varrho'(x)\circ K.
        \end{eqnarray*}
         By (\ref{dsmile}) and Proposition \ref{omegachiclosed}, we obtain
        \begin{eqnarray*}
            \dM^F_{\varrho'}\omega'&=&\dM^F_{\varrho}\omega'+[r,\omega']_{\smile}%=\dM^F_{\varrho}\omega+(\dM^F_{\varrho})^2r+\frac{1}{2}\dM^F_{\varrho}[r,r]_{\smile}-[\omega',r]_{\smile}\\
            =\dM^F_{\varrho}\omega+(\dM^F_{\varrho})^{2}r+[\dM^F_{\varrho}r,r]_{\smile}-[\omega',r]_{\smile}.
        \end{eqnarray*}
        Furthermore, by \cite[Lemma 7.6.17]{HN}, we have
        $(\dM^F_{\varrho})^2r(x,y,z)=[\omega,r]_{\smile}(x,y,z)$ for all $x,y,z\in\g$. Thus, by \eqref{rrr0}, it follows that
        \begin{equation*}
            \dM^F_{\varrho'}\omega'=\dM^F_{\varrho}\omega+[\omega+\dM^F_{\varrho}r-\omega',r]_{\smile}=\dM^F_{\varrho}\omega-\frac{1}{2}[[r,r]_{\smile},r]_{\smile}=\dM^F_{\varrho}\omega.
        \end{equation*}
        By (\ref{deltasmile}) and (\ref{danddelta}),
        \begin{eqnarray*}
            \dM^F_{\varrho'}\chi'+\delta\omega'%&=&%\dM^F_{\varrho}\chi'+[r,\chi']_{\smile}+\delta\omega'
            =\dM^F_{\varrho}\chi-\dM^F_{\varrho}\delta r+[r,\chi]_{\smile}-[r,\delta r]_{\smile}+\delta\omega+\delta\dM^F_{\varrho}r+\frac{1}{2}\delta[r,r]_{\smile}
           % &=&\dM^F_{\varrho}\chi+\delta\omega+[r,\chi]_{\smile}+\delta\dM^F_{\varrho}r-\dM^F_{\varrho}\delta r-[r,\delta r]_{\smile}+\frac{1}{2}\delta[r,r]_{\smile}\\
            =\dM^F_{\varrho}\chi+\delta\omega,
        \end{eqnarray*}
        which implies that $(\dM^F_{\varrho'}\omega',\dM^F_{\varrho'}\chi'+\delta\omega')=(\dM^F_{\varrho}\omega,\dM^F_{\varrho}\chi+\delta\omega)$. 
     \end{proof}

\begin{thm}\label{classofkernel}
  Let $(\g, D)$ and $(\h, K)$ be \LD pairs. For a $(\g,D)$-kernel $\frkk:\g\lon\Out(\h)$, the set $\rm{Ext}(\g,D;\h,K)_{\frkk}$ is nonempty if and only if the cohomology class ${\rm ch}(\frkk)\in H^3_{\LD}(\g,D;\frkz(\h),K)_{\frkk}$ vanishes.
    \begin{proof}
        If the set $\rm{Ext}(\g,D;\h,K)_{\frkk}$ is nonempty, then there exists a non-abelian extension of $(\g, D)$ by $(\h, K)$ such that $\frkk$ is the corresponding $(\g, D)$-kernel. By Theorem \ref{corres} and Proposition \ref{barrho}, there exists a non-abelian 2-cocycle $(\varrho, \omega, \chi)$ such that $\bar{\varrho}=\frkk$. By \eqref{nc2} and \eqref{nc4}, we have
$(\dM^F_{\varrho}\omega,\dM^F_{\varrho}\chi+\delta\omega)=0$. Then ${\rm ch}(\frkk)=[(\dM^F_{\varrho}\omega,\dM^F_{\varrho}\chi+\delta\omega)]=0$.

        Since $(\g, D)$ and $(\h, K)$ are \LD pairs and $\frkk:\g\lon\Out(\h)$ is a $(\g,D)$-kernel for $(\g,K)$, one can choose a linear map $\varrho:\g\lon\Der(\h)$ such that $\overline{\varrho}=\frkk$ and ${\rm ch}(\frkk)=[(\dM^F_{\varrho}\omega,\dM^F_{\varrho}\chi+\delta\omega)]$, where $\omega$ and $\chi$ given by \eqref{1defiomega} and \eqref{1defichi}. If ${\rm ch}(\frkk)=0\in H^3(\g,D;\frkk(\h),K)_{\frkk}$, then there exist linear maps $\eta\in C^2(\g,\frkz(\h))_{\frkk}$ and $\theta\in C^1(\g,\frkz(\h))_{\frkk}$, such that $(\dM^{CE}_{\rho_{\frkk}}\eta,\dM^{CE}_{\rho_{\frkk}}\theta+\delta\eta)=\partial_{\rho_\frkk}(\eta,\theta)=(\dM^F_{\varrho}\omega,\dM^F_{\varrho}\chi+\delta\omega)$. Define
        $\omega'=\omega-\eta, \  \chi'=\chi-\theta$, it follows that 
        \begin{equation*}
            \ad(\omega'(x,y))=\ad(\omega(x,y))=[\varrho(x),\varrho(y)]_{\Der(\h)}-\varrho[x,y]_{\g},
        \end{equation*}
        and
        \begin{equation*}
            \ad\chi'=\ad\chi=K\circ\varrho(x)-\varrho(Dx)-\varrho(x)\circ K,
        \end{equation*}
        for all $x,y\in\g$. Furthermore, we obtain
        \begin{equation*}
            \dM^F_{\varrho}\omega'=\dM^F_{\varrho}\omega-\dM^F_{\varrho}\eta=\dM^F_{\varrho}\omega-\dM^{CE}_{\rho_{\frkk}}\eta=0,
        \end{equation*}
        and
        \begin{equation*}
            \dM^F_{\varrho}\chi'+\delta\omega'=\dM^F_{\varrho}\chi-\dM^F_{\varrho}\theta+\delta\omega-\delta\eta=\dM^F_{\varrho}\chi+\delta\omega-(\dM^{CE}_{\rho_{\frkk}}\theta+\delta\eta)=0.
        \end{equation*}
        Hence, $(\varrho,\omega'\chi')$ is a non-abelian 2-cocycle. By Theorem \ref{corres},  there exists a non-abelian extension of $(\g, D)$ by $(\h, K)$ corresponding to $\frkk$.
    \end{proof}
\end{thm}

\section{Applications: Extensibility of derivations of Lie algebras and the obstruction class}
In this section, we use the non-abelian extensions of Lie algebras with derivations to study the following question: 
\begin{itemize}
    \item
Given a non-abelian extension of Lie algebras $0\lon\h\lon\hat{\g}\lon\g\lon0$, let $(K,D)\in\Der(\h)\times\Der(\g)$ be a pair of derivations of $\h$ and $\g$ respectively. When does there exist a derivation $\hat{D}$ of $\hat{\g}$ such that $$\hat{D}|_\h=K, \quad \text{and}\quad  D\circ p=p\circ\hat{D}.$$ 
\end{itemize}
If such a $\hat{D}$ exists, we say that the pair of derivations $(K,D)$  is {\bf extensible}. 

Let $0\lon\h\lon\hat{\g}\lon\g\lon0$ be a non-abelian extension of $\g$ by $\h$. Denote by $\Der_{\h}(\hat{\g})$ the set of all the derivations of $\hat{\g}$ that preserve $\h$ i.e., those $\hat{D}\in\Der(\hat{\g})$ for which $\hat{D}|_\h\in\Der(\h)$. 
Choose a section $s:\g\lon\hat{\g}$ of the non-abelian extension, then we can define a map
$\Gamma:\Der_{\h}(\hat{\g})\lon\Der(\h)\times\gl(\g)$ by
$$
\Gamma(\hat{D})=(\hat{D}|_\h, p\circ \hat{D}\circ s), \quad \forall~~\hat{D}\in\Der_{\h}(\hat{\g}),
$$
where $p:\hat{\g}\lon\g$ is the projection. Note that for two different sections $s:\g\lon\hat{\g}$ and $s':\g\lon\hat{\g}$, we have 
$$
p\circ\hat{D}\circ(s-s')=0,
$$
then the definition of $\Gamma$ is independent on the choice of the section $s$. Denote by {\rm Im}$(\Gamma)$ the image of $\Gamma$.
\begin{pro}
With the above notations, we have that {\rm Im}$(\Gamma)\subset\Der(\h)\times\Der(\g)$ and $\Gamma$ is a Lie algebra homomorphism. In other words, $\Gamma:\Der_{\h}(\hat{\g})\lon\Der(\h)\times\Der(\g)$ is a Lie algebra homomorphism.
\end{pro}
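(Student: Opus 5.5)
To show that $\Gamma$ maps into $\Der(\h)\times\Der(\g)$ and is a Lie algebra homomorphism, I will first isolate the key identity $p\circ\hat{D}=D_{\hat D}\circ p$, where $D_{\hat D}:=p\circ\hat{D}\circ s$.

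\textbf{Step 1: the identity $p\circ\hat D=D_{\hat D}\circ p$.} Fix $\hat{D}\in\Der_{\h}(\hat{\g})$. By definition $\hat D|_\h\in\Der(\h)$, so the first component is fine. Any $\hat x\in\hat\g$ can be written $\hat x=s(p(\hat x))+u$ with $u\in\h$. Since $\hat D(u)\in\h=\ker p$, we get
$$p(\hat D(\hat x))=p(\hat D(s(p(\hat x))))=D_{\hat D}(p(\hat x)).$$
The same argument also reproves that $D_{\hat D}$ is independent of $s$.

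\textbf{Step 2: $D_{\hat D}$ is a derivation of $\g$.} For $x,y\in\g$ put $\hat x=s(x)$ and $\hat y=s(y)$, so that $[x,y]_\g=p([\hat x,\hat y]_{\hat\g})$. Then
$$D_{\hat D}([x,y]_\g)=p\hat D[\hat x,\hat y]_{\hat\g}=p\big([\hat D\hat x,\hat y]_{\hat\g}+[\hat x,\hat D\hat y]_{\hat\g}\big)=[D_{\hat D}x,y]_\g+[x,D_{\hat D}y]_\g.$$
The first equality uses Step 1, the second uses that $\hat D$ is a derivation, and the third uses that $p$ is a Lie homomorphism together with Step 1. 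Hence $\mathrm{Im}(\Gamma)\subset\Der(\h)\times\Der(\g)$.

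\textbf{Step 3: linearity and compatibility with brackets.} Linearity of $\Gamma$ is clear. Note also that $\Der_\h(\hat\g)$ is a Lie subalgebra, since the commutator of two derivations preserving $\h$ again preserves $\h$. For $\hat D_1,\hat D_2$, restriction to the invariant subspace $\h$ commutes with taking commutators, so $[\hat D_1,\hat D_2]|_\h=[\hat D_1|_\h,\hat D_2|_\h]$. For the second component, applying Step 1 twice gives
$$p\hat D_1\hat D_2 s=D_{\hat D_1}\,p\,\hat D_2 s=D_{\hat D_1}D_{\hat D_2}.$$
Subtracting the symmetric term yields $D_{[\hat D_1,\hat D_2]}=[D_{\hat D_1},D_{\hat D_2}]$. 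Here the bracket on $\Der(\h)\times\Der(\g)$ is the componentwise commutator.

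The only point needing care is Step 1. It is the identity that makes $p\hat D s$ behave well even though $s$ is not a homomorphism. Once Step 1 is established, every remaining verification is a one-line consequence of it.
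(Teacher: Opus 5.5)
Your proof is correct and follows essentially the paper's argument. Your Step 1 identity $p\circ\hat{D}=(p\circ\hat{D}\circ s)\circ p$, i.e.\ $p\circ\hat{D}\circ(\Id-s\circ p)=0$, is the fact the paper uses inline: it appears as $p\hat{D}\big([s(x),s(y)]_{\hat{\g}}-s([x,y]_\g)\big)=0$ for the derivation property and as $p\circ\hat{D}\circ(\Id-s\circ p)\circ\hat{D}'\circ s=0$ for bracket compatibility. The differences are that you state it once as a lemma up front, and you spell out two points the paper leaves implicit: that $\Der_{\h}(\hat{\g})$ is closed under commutators, and that restriction to $\h$ commutes with them.
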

\begin{proof}
For all $x, y\in\g$, since 
\begin{eqnarray*}
&&[(p\circ\hat{D}\circ s)(x),y]_{\g}+[x,(p\circ\hat{D}\circ s)(y)]_{\g}-(p\circ\hat{D}\circ s)([x,y]_{\g})\\
%&=&[(p\circ\hat{D}\circ s)(x),(p\circ s)(y)]_{\g}+[(p\circ s)(x),(p\circ\hat{D}\circ s)(y)]_{\g}-(p\circ\hat{D}\circ s)([x,y]_{\g})\\
%&=&p([\hat{D}(s(x)),s(y)]_{\g}+[s(x),\hat{D}(s(y))]_{\g})-(p\circ\hat{D}\circ s)([x,y]_{\g})\\
&=&p(\hat{D}[(s(x)),s(y)]_{\g})-p\hat{D}(s([x,y]_{\g}))=p\hat{D}([s(x),s(y)]_{\hat{\g}}-s([x,y]_{\g}))=0,
\end{eqnarray*}
then $p\circ \hat{D}\circ s\in\Der(\g)$. 

Since
$
   p\big( \hat{D}(s(x))-s(p\circ\hat{D}\circ s(x))\big)=0,%=p\hat{D}s(x)-ps(p\hat{D}s(x))=p\hat{D}s(x)-p\hat{D}s(x)=0,
$ it follows that $\mathrm{Im}\big((\Id-s\circ p)\circ\hat{D}\circ s\big)\in\h$. Then
\begin{equation*}
    p\circ\hat{D}\circ\hat{D}'\circ s-(p\circ\hat{D}\circ s)\circ (p\circ\hat{D}'\circ s)=p\circ\hat{D}\circ(\Id-s\circ p)\circ\hat{D}'\circ s=0,
\end{equation*}
for all $\hat{D},\hat{D}'\in\Der_{\h}(\hat{\g})$, which implies that
$
    p\circ[\hat{D},\hat{D}']_{\Der(\hat{\g})}\circ s %&=&p\circ\hat{D}\circ\hat{D'}\circ s-p\circ\hat{D}'\circ\hat{D}\circ s\\
    %&=&(p\circ\hat{D}\circ s)\circ (p\circ\hat{D}'\circ s)-(p\circ\hat{D}'\circ s)\circ (p\circ\hat{D}\circ s)\\
    =[p\circ\hat{D}\circ s,p\circ\hat{D}'\circ s]_{\Der(\g)}$,
and 
\begin{eqnarray*}
    \Gamma([\hat{D},\hat{D}']_{\Der(\hat{\g})})%&=&([\hat{D},\hat{D}']_{\Der(\hat{\g})}|_{\h},p\circ[\hat{D},\hat{D}']_{\Der(\hat{\g})}\circ s)\\
    &=&([\hat{D}|_{\h},\hat{D}'|_{\h}]_{\Der(\h)},[p\circ\hat{D}\circ s,p\circ\hat{D}'\circ s]_{\Der(\g)})\\
    %&=&[(\hat{D}|_{\h},p\circ\hat{D}\circ s),(\hat{D}'|_{\h},p\circ\hat{D}'\circ s)]_{\Der(\h)\times\Der(\g)}\\
    &=&[\Gamma(\hat{D}),\Gamma(\hat{D}')]_{\Der(\h)\times\Der(\g)}.
\end{eqnarray*} 
So, the map $\Gamma:\Der_{\h}(\hat{\g})\lon\Der(\h)\times\Der(\g)$ is a Lie algebra homomorphism.
\end{proof}
Note that if $\hat{D}|_\h=K$, taking a section $s:\g\lon\hat{\g}$, we have 
\begin{eqnarray*}
    (p\circ\hat{D}-p\circ\hat{D}\circ s\circ p)(\hat{x})=p\Big(\hat{D}(\hat{x}-sp(\hat{x}))\Big)=p\Big(K(\hat{x}-sp(\hat{x}))\Big)=0,
\end{eqnarray*}
for all $\hat{x}\in\hat{\g}$.
Therefore, finding a derivation $\hat{D}$ that satisfies $D\circ p=p\circ \hat{D}$ is equivalent to taking a section $s:\g\lon\hat{\g}$ and then finding a derivation $\hat{D}$ that satisfies $D=p\circ\hat{D}\circ s$. 

Since $0\lon\h\lon\hat{\g}\lon\g\lon0$ is a non-abelian extension of $\g$ by $\h$, let $s:\g\lon\hat\g$ be a section. By \cite[Proposition 1.1]{Fr},  the pair $(\varrho,\omega)$ induced by $s$ is the corresponding non-abelian $2$-cocycle, where
\begin{eqnarray}\label{non121}
\left\{\begin{array}{rcl}
~~\varrho(x)u&=&[s(x),u]_{\hat{\g}},\quad\forall x\in\g, u\in\h, \\
~~\omega(x,y)&=&[s(x),s(y)]_{\hat{\g}}-s([x,y]_\g), \quad \forall x, y\in\g.\\
%~~\chi(x)&=&\hat{D}(s(x))-s(D(x)), \quad \forall x, y\in\g, u\in\h,
\end{array}\right.
\end{eqnarray}
\begin{thm}\label{inducible111}
With the above notations, a pair $(K,D)\in  \Der(\h)\times \Der(\g)$ lies in {\rm Im}$(\Gamma)$ if and only if there exists a linear map $\chi:\g\to \h$, such that
\begin{eqnarray}
\label{11}[\chi(x),u]_{\h}&=&K(\varrho(x)u)-\varrho(D(x))-\varrho(x)K(u),\\
\label{12}K(\omega(x,y))-\omega(Dx,y)-\omega(x,Dy)&=&\varrho(x)\chi(y)-\varrho(y)\chi(x)-\chi([x,y]_{\g}),
\end{eqnarray}
for all $x,y\in \g, \  u\in \h$. 
\end{thm}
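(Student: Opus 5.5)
The idea is to reduce the statement to Theorem \ref{corres} and Proposition \ref{sexnoon}, via the identification of $\hat{\g}$ with $(\g\oplus\h,[\cdot,\cdot]_{\varrho,\omega})$ through $S(x,u)=s(x)+u$. Throughout, $(K,D)\in{\rm Im}(\Gamma)$ means there is a derivation $\hat D$ of $\hat\g$ with $\hat D|_\h=K$ and $p\circ\hat D\circ s=D$. By the remark preceding the theorem, the last condition is equivalent to $D\circ p=p\circ\hat D$. (In \eqref{11} the term $\varrho(D(x))$ is read as $\varrho(D(x))u$.)

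\emph{Necessity.} Suppose $\hat D\in\Der_\h(\hat\g)$ with $\Gamma(\hat D)=(K,D)$. Then $(\hat\g,\hat D)$ is a non-abelian extension of the \LD pair $(\g,D)$ by $(\h,K)$ in the sense of Definition \ref{nab}: the left square commutes because $\hat D|_\h=K$, and the right square because $p\hat D=Dp$. Define $\chi(x)=\hat D(s(x))-s(D(x))$. Since $p\hat D s=D=p s D$, we get $p\chi(x)=0$, so $\chi$ takes values in $\h$. By Proposition \ref{sexnoon}, $(\varrho,\omega,\chi)$ is a non-abelian $2$-cocycle, with $\varrho,\omega$ exactly those of \eqref{non121}. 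Equations \eqref{nc3} and \eqref{nc4} are precisely \eqref{11} and \eqref{12} after rearranging terms.

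\emph{Sufficiency.} Given $\chi$ satisfying \eqref{11} and \eqref{12}, the pair $(\varrho,\omega)$ already satisfies \eqref{nc1}, \eqref{nc2} and $\varrho(x)\in\Der(\h)$, because it comes from the Lie extension $\hat\g$. Hence $(\varrho,\omega,\chi)\in Z^2_{nab}(\g,D;\h,K)$. By the construction in the proof of Theorem \ref{corres}, the map $D_\chi(x,u)=(D(x),K(u)+\chi(x))$ is a derivation of $(\g\oplus\h,[\cdot,\cdot]_{\varrho,\omega})$.

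Next, transport this derivation to $\hat\g$ by setting $\hat D=S\circ D_\chi\circ S^{-1}$, that is,
\[
\hat D(s(x)+u)=s(D(x))+K(u)+\chi(x).
\]
The key check is that $S$ is a Lie algebra isomorphism from $(\g\oplus\h,[\cdot,\cdot]_{\varrho,\omega})$ onto $\hat\g$, which is the computation in the proof of Proposition \ref{sexnoon} read backwards. Given this, $\hat D$ is a derivation of $\hat\g$. Clearly $\hat D|_\h=K$ and $p\hat D s=D$, so $\Gamma(\hat D)=(K,D)$.

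No step is a genuine obstacle. The only care needed is in the sufficiency direction: one must observe that \eqref{nc1}, \eqref{nc2} hold automatically for the $(\varrho,\omega)$ induced by $s$, so that \eqref{11} and \eqref{12} are exactly the missing conditions \eqref{nc3} and \eqref{nc4}.
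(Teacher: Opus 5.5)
Your proposal is correct and follows the paper's proof. Necessity takes the cocycle $(\varrho,\omega,\chi)$ with $\chi(x)=\hat D(s(x))-s(D(x))$ and reads \eqref{nc3}, \eqref{nc4} as \eqref{11}, \eqref{12}; sufficiency defines $\hat D(s(x)+u)=s(D(x))+K(u)+\chi(x)$. Your transport of $D_\chi$ along $S$, together with the observation that \eqref{nc1}, \eqref{nc2} hold automatically for the $(\varrho,\omega)$ induced by $s$, simply spells out the derivation check that the paper asserts without detail.
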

\begin{proof}
If $(K,D)\in\rm{Im}(\Gamma)$, then there exists a derivation $\hat{D}\in \Der_{\h}(\hat{\g})$ such that $\Gamma(\hat{D})=(K,D)$, i.e., $K=\hat{D}|_{\h}$ and $D=p\circ \hat{D}\circ s$. In this case, $(\hat{\g},\hat{D})$ is a non-abelian extension of $(\g,D)$ by $(\h,K)$, i.e., we have the following commutative diagram:
\[\begin{CD}
0@>>>\h@>i>>\hat{\g}@>p>>\g            @>>>0\\
@.    @V K VV   @V\hat{D}VV  @V D VV    @.\\
0@>>>\h @>i>>\g@>p>>\g           @>>>0.
\end{CD}\]
Define $\chi:\g\lon\h$ by $\chi(x)=\hat{D}(s(x))-s(D(x))$ for all $x\in\g$, then \eqref{11} and \eqref{12} hold as $(\varrho, \omega, \chi)$ is a non-abelian $2$-cocycle.

Conversely, suppose that there exists a linear map $\chi:\g\lon \h$ satisfying the \eqref{11} and \eqref{12}. Since any element of $\hat{\g}$ can be written uniquely in the form $s(x)+u$ for $x\in\g,u\in \h$, we define a linear map $\hat{D}:\hat{\g}\lon\hat{\g}$ by
\begin{equation*}
\hat{D}(s(x)+u)= s(D(x))+K(u)+\chi(x), \quad\forall ~~x\in\g, u\in\h.
\end{equation*}
Then the map $\hat{D}$ is a derivation of $\hat{\g}$. Moreover, by the definition of $\hat{D}$, we have
\begin{equation*}
\hat{D}(u)=K(u),\quad p\Big(\hat{D} (s(x))\Big)=p\Big(s(D(x))+\chi(x)\Big)=D(x),
\end{equation*}
for all $x\in \g,\ u\in \h$. In other words, $(K,D)=\Gamma(\hat{D})$, i.e., $(K,D)\in\mathrm{Im}(\Gamma)$.
\end{proof}
Finally, we give the obstruction class of the extensibility of derivations of Lie algebras.
\begin{defi}
    Let $(K,D)\in\Der(\h)\times\Der(\g)$ be a pair of derivations, if there exists a linear map $\chi:\g\lon\h$ such that
    \begin{equation}\label{deficom}
        [\chi(x),u]_{\h}=K(\varrho(x)u)-\varrho(D(x))-\varrho(x)K(u),
    \end{equation}
    for all $x\in\g,\ u\in\h$. Then, $K$ and $D$ are called {\bf compatible}. Denote the set of compatible derivations of $\g$ and $\h$ by $C\Der(\h,\g)$.
\end{defi}
Let $0\lon\h\lon\hat{\g}\lon\g\lon0$ be a non-abelian extension of $\g$ by $\h$. Choose a section $s:\g\lon\hat\g$, and let $(\varrho,\omega)$ be the corresponding non-abelian $2$-cocycle induced by $s$, as in \eqref{non121}. Then we have that $\dM^F_{\varrho}\omega=0$ and $(\varrho, \omega)$ satisfies \eqref{nc1} according to \cite[Proposition 1.1]{Fr}. Then $\varrho:\g\lon\gl(\frkz(\h))$
is a representation of $\g$ on $\frkz(\h)$. Assume that $K$ and $D$ are compatible, then there exists $\chi:\g\lon\h$ satisfying \eqref{deficom}. By \eqref{deficeo}, Proposition \ref{omegachiclosed} and Theorem \ref{defich}, we have that $[\dM^F_\varrho\chi+\delta(\omega)]\in H^2(\g, \frkz(\h))$, where $H^{2}(\g, \frkz(\h))$ is the second cohomology group of the Lie algebra $\g$ with coefficients in the representation $(\frkz(\h), \varrho).$ Thus we obtain a map  
    \begin{equation*}
        \huaW:C\Der(\h,\g)\lon H^2(\g,\frkz(\h)),\quad (K,D)\mapsto [\dM^F_{\varrho}\chi+\delta(\omega)].
    \end{equation*}
\begin{thm}
With the above notations, a pair of derivations $(K, D)$ is extensible if and only if $K$ and $D$ are compatible and $\huaW(K, D)=0$. The cohomology class $\huaW(K, D)$ is called the obstruction class for the extensibility of $(K, D)$. 
    \begin{proof}
        If $(K, D)$ is extensible, then there exists a derivation $\hat{D}\in\Der(\hat{\g})$, such that $\Gamma(\hat{D})=(K,D)$. 
        Thus, we obtain that $(\hat{\g},\hat{D})$ is a \LD pair non-abelian extension of $(\g,D)$ by $(\h,K)$. 
        Define $\chi:\g\lon\h$ by
        $$
        \chi(x)=\hat{D}(s(x))-s(D(x)), \quad \forall x\in\g.
        $$
        Then $(\varrho, \omega, \chi)$ is a non-abelian $2$-cocycle of $(\g, D)$ with values in $(\h, K)$, which means that $K$ and $D$ are compatible. By Theorem \ref{classofkernel}, we get that $[\dM^F_{\varrho}\chi+\delta(\omega)]=0\in H^2(\g,\frkz(\h))$.

        If there exists a linear map $\chi:\g\lon\h$, such that
       $$
       [\chi(x),u]_{\h}=K(\varrho(x)u)-\varrho(D(x))-\varrho(x)K(u), \quad\forall x\in\g, u\in\h,
       $$
       and $[\dM^F_{\varrho}\chi+\delta(\omega)]=0\in H^2(\g,\frkz(\h))$. It follows that there exists a linear map $\theta\in \Hom(\g,\frkz(\h))$, such that $\dM^F_{\varrho}\chi+\delta(\omega)=\dM^{CE}_{\varrho}\theta$. \emptycomment{i.e.,
        \begin{equation*}
            \varrho(x)\chi(y)-\varrho(y)\chi(x)-\chi([x, y]_\g)+\omega(D(x), y)+\omega(x, D(y))-K(\omega(x, y))=\varrho(x)\theta(y)-\varrho(y)\theta(x)-\theta([x, y]_\g),
        \end{equation*}
        for all $x, y\in\g$.} Define $\chi'=\chi-\theta$, we have
        \begin{equation*}
          [\chi'(x),u]_{\h}=K(\varrho(x)u)-\varrho(D(x))-\varrho(x)K(u), \quad \forall x\in\g, u\in\h,
        \end{equation*}
        and
        \begin{equation*}
            \varrho(x)\chi'(y)-\varrho(y)\chi'(x)-\chi'([x, y]_\g)=K(\omega(x, y))-\omega(D(x), y)-\omega(x, D(y)),
        \end{equation*}
        for all $x, y\in\g$. Thus, $\chi'$ satisfies equations (\ref{11}) and (\ref{12}). By Proposition \ref{inducible111}, the pair $(K,D)$ is extensible.
    \end{proof}
\end{thm}
\begin{rmk}
	From the proof of the above theorem, we can obtain $\ker\huaW=\mathrm{Im}(\Gamma)$. Furthermore, define a linear map $i: Z^{1}(\g,\frkz(\h))_{\frkk}\lon \Der_{\h}(\hat{\g})$ by $$
	i(\chi)(X)=\chi(s(X)), \quad \forall X\in\hat{\g}.
	$$
	By direct calculation, we have $\mathrm{Im}(i)=\ker\Gamma$. 
	Thus, there is a {\rm Wells} type exact sequence in the context of extension of derivations of Lie algebras as follows:
	$$
	0\longrightarrow Z^{1}(\g,\frkz(\h))_{\frkk}\overset{i}\longrightarrow \Der_{\h}(\hat{\g}) \overset{\Gamma}\longrightarrow C\Der(\h,\g)_{\frkk}\overset{\huaW}\longrightarrow H^2(\g,\frkz(\h))_{\frkk}.
	$$
	\end{rmk}
\begin{rmk}
When the extension of $\g$ by $\h$ reduces to a central extension, all pairs of derivations $(K,D)\in\Der(\h)\times\Der(\g)$ are compatible. Thus, the linear map $\chi$ in the definition of $\huaW$ can be taken as $0$. Hence, we have $\huaW(K,D)=[\delta(\omega)]$, which is the obstruction class defined in \cite{TFS}.
\end{rmk}

\section*{Appendix A: Proof of Proposition \ref{omegachiclosed}}
In this appendix, we give a proof of Proposition \ref{omegachiclosed}. First, we need the following result.

\begin{pro}
Let $\frkk:\g\lon\Out(\h)$ be a $(\g,D)$-kernel for $(\h,K)$. Define a linear space $\tilde{\g}$ by
\begin{equation*}
    \tilde{\g}=\frkk^*\Der(\h)=\{(x,K')\in \g\times\Der(\h): \overline{K'}=\frkk(x)\}.
\end{equation*} 
Then $(\tilde{\g}, [\cdot, \cdot])$ is a Lie subalgebra of the direct product Lie algebra $(\g\times\Der(\h), [\cdot, \cdot])$. Moreover, define a linear map $\tilde{D}:\tilde{\g}\lon\tilde{\g}$ by
$$\widetilde{D}(x,K')=(D(x),[K,K']_{\Der(\h)}), \quad \forall (x, K')\in\tilde{\g}.$$ Then $(\tilde{\g}, \tilde{D})$ is a \LD pair.
\end{pro}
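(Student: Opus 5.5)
The plan is to check three things directly: that $\tilde{\g}$ is a Lie subalgebra of $\g\times\Der(\h)$, that $\tilde D$ maps $\tilde{\g}$ into itself, and that $\tilde D$ is a derivation of $\tilde{\g}$. Throughout, the bracket on $\g\times\Der(\h)$ is the componentwise one,
$$[(x,K_1),(y,K_2)]=([x,y]_\g,[K_1,K_2]_{\Der(\h)}).$$

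\textbf{Subalgebra.} First, $\tilde{\g}$ is a linear subspace. This is because the quotient map $\Der(\h)\to\Out(\h)$, $K'\mapsto\overline{K'}$, and $\frkk$ are both linear, so the condition $\overline{K'}=\frkk(x)$ is linear in $(x,K')$. Next take $(x,K_1),(y,K_2)\in\tilde{\g}$. Since $\ad(\h)$ is an ideal of $\Der(\h)$, the quotient map is a Lie algebra homomorphism. Hence
$$\overline{[K_1,K_2]}=[\overline{K_1},\overline{K_2}]_{\Out(\h)}=[\frkk(x),\frkk(y)]_{\Out(\h)}=\frkk([x,y]_\g),$$
where the last step uses that $\frkk$ is a Lie algebra homomorphism. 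So the bracket of the two elements again lies in $\tilde{\g}$, and $\tilde{\g}$ is closed under the bracket.

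\textbf{$\tilde D$ preserves $\tilde{\g}$.} Take $(x,K')\in\tilde{\g}$. Then
$$\overline{[K,K']}=[\overline{K},\overline{K'}]_{\Out(\h)}=[\overline{K},\frkk(x)]_{\Out(\h)}=\frkk(D(x)),$$
where the last equality is exactly the defining condition \eqref{gdk0} of a $(\g,D)$-kernel. Therefore $(D(x),[K,K'])\in\tilde{\g}$. Linearity of $\tilde D$ is clear.

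\textbf{$\tilde D$ is a derivation.} The map $\tilde D$ is the restriction to $\tilde{\g}$ of $D\times\ad_{\Der(\h)}K$ acting on $\g\times\Der(\h)$. On the first factor, $D$ is a derivation of $\g$ by assumption. On the second factor, $\ad K=[K,\cdot]$ is an inner derivation of the Lie algebra $\Der(\h)$, by the Jacobi identity in $\Der(\h)$. A product of derivations is a derivation of the direct product Lie algebra. The restriction of a derivation to an invariant subalgebra is a derivation of that subalgebra. Combined with the two previous steps, this shows that $(\tilde{\g},\tilde D)$ is a LieDer pair.

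\textbf{Main obstacle.} There is no real obstacle here. The only point needing care is the invariance step, which is where condition \eqref{gdk0} on the kernel enters, together with the fact that the projection $\Der(\h)\to\Out(\h)$ is a Lie algebra homomorphism. That fact in turn rests on $\ad(\h)$ being an ideal of $\Der(\h)$, since $[K',\ad u]=\ad(K'u)$.
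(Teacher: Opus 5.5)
Your proof is correct and follows the same route as the paper. Closure under the bracket comes from $\Der(\h)\to\Out(\h)$ and $\frkk$ being Lie algebra homomorphisms, and invariance under $\tilde D$ comes from \eqref{gdk0}. For the derivation property you restrict the derivation $D\times\ad K$ of $\g\times\Der(\h)$, where the paper verifies the Leibniz rule by direct computation; your subalgebra line with $[K_1,K_2]$ is also the correct form of the paper's displayed step, which writes $[K,K']$ there by a typo.
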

\begin{proof}
Let $(x, K')$ and $(y, K'')$ in $\tilde\g$. Then $\frkk(x)=\overline{K'}$ and $\frkk(y)=\overline{K''}$, which implies
\begin{equation*}
    \overline{[K,K']_{\Der(\h)}}=[\overline{K},\overline{K'}]_{\Out(\h)}=[\frkk(x),\frkk(y)]_{\Out(\h)}=\frkk([x, y]_\g).
\end{equation*}
Thus $(\tilde{\g}, [\cdot, \cdot])$ is a Lie subalgebra of the direct product Lie algebra $(\g\times\Der(\h), [\cdot, \cdot])$.

Let $(x, K')\in\tilde{\g}$. Then $\frkk(x)=\overline{K'}$. By \eqref{gdk0}, we have
\begin{equation*}
    \overline{[K,K']_{\Der(\h)}}=[\overline{K},\overline{K'}]_{\Out(\h)}=[\overline{K},\frkk(x)]_{\Out(\h)}=\frkk(D(x)),
\end{equation*}
then $(D(x), [K, K']_{\Der(\h)})\in\g\times\Der(\h)$. Thus, 
the linear map $\widetilde{D}:\tilde{\g}\lon\tilde{\g}$ is well defined.
Moreover, for any $(x, K')\in\tilde{\g}$ and $(y, K'')\in\tilde{\g}$,
\begin{eqnarray*}
    \widetilde{D}([(x,K'),(y,K')])
    &=&\widetilde{D}([x,y]_{\g},[K',K'']_{\Der(\g)})\\%=(D([x,y]_{\g}),[K,[K',K'']_{\Der(\h)}]_{\Der(\h)})\\
    %&=&([D(x),y]_{\g}+[x,D(y)]_{\g},[[K,K']_{\Der(\g)},K'']_{\Der(\h)}+[K',[K,K'']_{\Der(\h)}]_{\Der(\h)})\\
   % &=&([D(x),y]_{\g},[[K,K']_{\Der(\g)},K'']_{\Der(\h)})+([x,D(y)]_{\g},[K',[K,K'']_{\Der(\h)}]_{\Der(\h)})\\
    &=&[(D(x),[K,K']_{\Der(\h)}),(y,K'')]+[(x,K'),(D(y),[K,K'']_{\Der(\h)})]\\
    &=&[\widetilde{D}(x,K'),(y,K'')]+[(x,K'),\widetilde{D}(y,K'')],
\end{eqnarray*}
which means that $\widetilde{D}:\tilde{\g}\lon\tilde{\g}$ is a derivation.
Therefore $(\tilde{\g}, \widetilde{D})$ is a \LD pair.
\end{proof}

Finally, we give the proof of Proposition \ref{omegachiclosed}.
\begin{proof}
    Since $\ad \omega(x,y)=[\varrho(x),\varrho(y)]_{\Der(\h)}-\varrho([x,y]_{\g})$ for all $x, y\in\g$, we have
    \begin{eqnarray*}
        &&\ad(\dM^F_{\varrho}\omega(x.y, z))\\
        %&=&\ad(\varrho(x)\omega(y,z))-\ad(\omega([x,y]_{\g},z))+\ad(\varrho(y)\omega(z,x))-\ad(\omega([y,z]_{\g},x))\\
        %&&+\ad(\varrho(z)\omega(x,y))-\ad(\omega([z,x]_{\g},y))\\
        &=&[\varrho(x),\ad(\omega(y,z))]_{\Der(\h)}-[\varrho([x,y]_{\g}),\varrho(z)]_{\Der(\h)}+\varrho([[x,y]_{\g},z]_{\g})+[\varrho(y),\ad(\omega(z,x))]_{\Der(\h)}\\
        &&-[\varrho([y,z]_{\g}),\varrho(x)]_{\Der(\h)}+\varrho([[y,z]_{\g},x]_{\g})+[\varrho(z),\ad(\omega(x,y))]_{\Der(\h)}-[\varrho([z,x]_{\g}),\varrho(y)]_{\Der(\h)}\\
        &&+\varrho([[z,x]_{\g},y]_{\g})\\
       % &=&[\varrho(x),[\varrho(y),\varrho(z)]_{\Der(\h)}]_{\Der(\h)}-[\varrho(x),\varrho([y, z]_{\g})]_{\Der(\h)}-[\varrho([x,y]_{\g}),\varrho(z)]_{\Der(\h)}\\
        %&&+\varrho([[x,y]_{\g},z]_{\g})+[\varrho(y),[\varrho(z),\varrho(x)]_{\Der(\h)}]_{\Der(\h)}-[\varrho(y),\varrho([z, x]_{\g})]_{\Der(\h)}\\
       % &&-[\varrho([y,z]_{\g}),\varrho(x)]_{\Der(\h)}+\varrho([[y,z]_{\g},x]_{\g})+[\varrho(z),[\varrho(x),\varrho(y)]_{\Der(\h)}]_{\Der(\h)}\\
        %&&-[\varrho(z),\varrho([x,y]_{\g})]_{\Der(\h)}-[\varrho([z,x]_{\g}),\varrho(y)]_{\Der(\h)}+\varrho([[z,x]_{\g},y]_{\g})\\
        &=&[\varrho(x),[\varrho(y),\varrho(z)]_{\Der(\h)}]_{\Der(\h)}+\varrho([[x,y]_{\g},z]_{\g})+[\varrho(y),[\varrho(z),\varrho(x)]_{\Der(\h)}]_{\Der(\h)}+\varrho([[y,z]_{\g},x]_{\g})\\
        &&+[\varrho(z),[\varrho(x),\varrho(y)]_{\Der(\h)}]_{\Der(\h)}+\varrho([[z,x]_{\g},y]_{\g})=0,
    \end{eqnarray*}
    for all $x,y,z\in\g$, which implies $\dM^F_{\varrho}\omega\in C^3(\g,\frkz(\h))$. Since $$\ad\chi(x)=K\circ\varrho(x)-\varrho(D(x))-\varrho(x)\circ K,\quad\forall x\in\g,$$  it follows that
    \begin{eqnarray*}
         &&\ad (\dM^F_{\varrho}\chi(x,y))\\%&=&\ad(\varrho(x)\chi(y))-\ad(\varrho(y)\chi(x))-\ad(\chi[x,y]_{\g})\\
         &=&[\varrho(x),\ad\chi(y)]_{\Der(\h)}-[\varrho(y),\ad\chi(x)]_{\Der(\h)}-\ad\chi([x,y]_{\g})\\
         &=&[\varrho(x),K\circ\varrho(y)-\varrho(D(y))-\varrho(y)\circ K]_{\Der(\h)}-[\varrho(y),K\circ\varrho(x)-\varrho(D(x))-\varrho(x)\circ K]_{\Der(\h)}\\
         &&-K\circ\varrho([x,y]_{\g})+\varrho([D(x),y]_{\g}+[x,D(y)]_{\g})+\varrho([x,y]_{\g})\circ K.
    \end{eqnarray*}
   Moreover,
    \begin{eqnarray*}
        \ad (\delta\omega(x,y))%&=&\ad(\omega(D(x),y))+\ad(\omega(x,D(y)))-\ad (K(\omega(x,y)))\\
        %&=&\ad(\omega(D(x),y))+\ad(\omega(x,D(y)))-[K,\ad\omega(x,y)]_{\Der(\h)}\\
        &=&[\varrho (D(x)),\varrho (y)]_{\Der(\h)}-\varrho([D(x),y]_{\g})+[\varrho(x),\varrho(D(y))]_{\Der(\h)}-\varrho([x,D(y)]_{\g})\\
        &&-[K,[\varrho (x),\varrho (y)]_{\Der(\h)}]_{\Der(\h)}+[K,\varrho([x,y]_{\g})]_{\Der(\h)}\\
        &=&[\varrho (D(x)),\varrho (y)]_{\Der(\h)}-\varrho([D(x),y]_{\g})+[\varrho(x),\varrho(D(y))]_{\Der(\h)}-\varrho([x,D(y)]_{\g})\\
        &&-[K\circ\varrho(x)-\varrho(x)\circ K,\varrho(y)]_{\Der(\h)}-[\varrho(x), K\circ\varrho(y)-\varrho(y)\circ K]_{\Der(\h)}\\
        &&+K\circ \varrho([x,y]_{\g})-\varrho([x,y]_{\g})\circ K,
    \end{eqnarray*}
    for all $x,y\in\g$. Thus, $\ad (\dM^F_{\varrho}\chi+\delta\omega)=0$, i.e., $\dM^F_{\varrho}\chi+\delta\omega\in C^2(\g,\frkz(\h))$, which means that $$(\dM^F_{\varrho}\omega,\dM^F_{\varrho}\chi+\delta\omega)\in C^3_{\LD}(\g,\frkz(\h)).$$

Since $\overline{\varrho}=\frkk$, by Proposition \ref{repkk}, we have that $\rho_{\frkk}:\g\lon\gl(\frkz(\h))$ is a representation of $(\g,D)$ on $(\frkz(\h),K)$, where
$$
\rho_{\frkk}(x)u=\frkk(x)u=\varrho(x)u, \quad \forall u\in\frkz(\h),  ~~x\in\g.
$$
Denote by $(C_{\LD}^*(\tilde{\g}, \frkz(\h)), \partial_{\rho_\frkk})$ the $n$-th cohomology group of $(\g, D)$ with coefficients in the representation $(\rho_\frkk, K)$. %In the following, we prove $\partial_{\rho_\frkk}(\dM^F_{\varrho}\omega,\dM^F_{\varrho}\chi+\delta\omega)=0$, 
Define a linear map $\tilde{\rho}:\tilde{\g}\lon\Der(\h)$ by $$\tilde{\rho}(x, K')u=K'(u), \quad \forall (x, K')\in\tilde{\g}, ~~u\in\h, $$
since the derivations preserve $\frkz(\h)$, then $\tilde{\rho}$ is a representation of $\tilde{\g}$ on $\frkz(\h)$. Moreover, 
$$
K(\tilde{\rho}(x, K')u)=K(K'(u))=[K, K']_{\Der{\h}}(u)+K'(K(u))=\tilde{\rho}(\tilde{D}(x, K'))u+\tilde{\rho}(x, K')K(u),
$$
which means that $\tilde{\rho}:\tilde{\g}\lon\gl(\frkz(\h))$ is a representation of $(\tilde{\g}, \tilde{D})$ on $(\frkz(\h), K)$. We denote by $(C_{\LD}^*(\tilde{\g}, \frkz(\h)), \partial_{\tilde{\rho}})$ the cochain complex of $(\tilde{\g}, \tilde{D})$ with coefficients in the representation $(\frkz(\h), \tilde{\rho})$.

    Define $q:\tilde{\g}\lon\g$ by $$q(x, K')=x,\quad\forall (x, K')\in\tilde{\g}.$$ By direct calculation, $q^*:(C^*_{\LD}(\g, \frkz(\h)), \partial_{\rho_\frkk})\lon (C_{\LD}^*(\tilde{\g}, \frkz(\h)), \partial_{\tilde{\rho}})$ is a cochain map. 
    
     Define $\tilde{\varrho}:\tilde{\g}\lon\gl(\frkz(\h))$ by 
    $$
    \tilde{\varrho}(x, K')=\varrho(x), \quad\forall (x, K')\in\tilde{\g}.
    $$ 
Denote by $\tilde{\omega}=q^*(\omega)$ and $\tilde{\chi}=q^*(\chi)$, by direct calculation we have \begin{equation*}
        q^*(\dM^{F}_{\varrho}\omega)=\dM^{F}_{\tilde{\varrho}}\tilde{\omega}, \quad \quad q^*(\dM^{F}_{\varrho}\chi+\delta(\omega))=\dM^{F}_{\tilde{\varrho}}\tilde{
        \chi}+\delta(\tilde{\omega}),
    \end{equation*}
    where $\delta$ given by \eqref{deeqsigma}. To prove that $(\dM^F_{\varrho}\omega,\dM^F_{\varrho}\chi+\delta\omega)$ is a 3-cocycle, it suffices to show that the pullback $q^*(\dM^F_{\varrho}\omega,\dM^F_{\varrho}\chi+\delta\omega)=(\dM^{F}_{\tilde{\varrho}}\tilde{\omega}, \dM^{F}_{\tilde{\varrho}}\tilde{\chi}+\delta(\tilde{\omega}))\in C^3_{\LD}(\tilde{\g},\frkz(\h))$ is a $3$-cocycle as $q:\tilde{\g}\lon\g$ is surjective. Since $$\overline{\tilde{\varrho}(x,K')-\tilde{\rho}(x,K')}=\overline{\varrho(x)-K'}=\frkk(x)-\overline{K'}=0,\quad \forall(x,K')\in\tilde{\g},$$ there exists a linear map $\tilde{\varepsilon}:\tilde{\g}\lon\h$ such that 
    \begin{equation}\label{varrhorho}
        \tilde{\varrho}=\tilde{\rho}+\ad\circ\tilde{\varepsilon}:\tilde{\g}\lon\gl(\frkz(\h)).
    \end{equation} 
For any $(x, K')\in\tilde{\g}$ and $u\in\h$, we have
    \begin{eqnarray*}
        \Big(\ad \tilde{\chi}(x,K')\Big)u&=&[\chi(x), u]_\h\\%=K\Big(\tilde{\varrho}(x,K')u\Big)-\tilde{\varrho}(\tilde{D}(x,K'))u-\tilde{\varrho}(x,K')K(u)\\
        &=&K\Big((\tilde{\rho}+\ad\tilde{\varepsilon})(x,K')u\Big)-\Big((\tilde{\rho}+\ad\tilde{\varepsilon})(\tilde{D}(x,K'))\Big)u-(\tilde{\rho}+\ad\tilde{\varepsilon})(x,K')K(u)\\
       % &=&K\Big(\tilde{\rho}(x,K')\Big)u-\tilde{\rho}(\tilde{D}(x,K'))u-\tilde{\rho}(x,K')K(u)\\
       % &&+K(\ad\tilde{\varepsilon}(x,K')u)-\ad\tilde{\varepsilon}(\tilde{D}(x,K'))u-\ad\tilde{\varepsilon}(x,K')K(u)\\
        &=&K(\ad\tilde{\varepsilon}(x,K')u)-\ad\tilde{\varepsilon}(\tilde{D}(x,K'))u-\ad\tilde{\varepsilon}(x,K')K(u)\\
       % &=&\ad K(\tilde{\varepsilon}(x,K'))u-\ad\tilde{\varepsilon}(\tilde{D}(x,K'))u\\
        &=&-\ad\delta(\tilde{\varepsilon}(x, K'))u.
    \end{eqnarray*}
   Then $\tilde{\chi}+\delta(\tilde{\varepsilon})\in\Hom(\tilde{\g}, \frkz(\h))$.
    By (\ref{varrhorho}), we have  
    \begin{eqnarray*}
\dM^{F}_{\tilde{\varrho}}\tilde{\omega}&=&\dM^{CE}_{\tilde{\rho}}\tilde{\omega}+[\tilde{\varepsilon},\tilde{\omega}]_{\smile},
\quad
\dM^{F}_{\tilde{\varrho}}\tilde{\chi}=\dM^{CE}_{\tilde{\rho}}\tilde{\chi}+[\tilde{\varepsilon},\tilde{\chi}]_{\smile}.
    \end{eqnarray*}
    Since for all $(x,K'),(y,K'')\in\tilde{\g}, u\in\h$,
    \begin{eqnarray*}
       && [\tilde{\omega}((x,K'),(y,K''))), u]_\h\\
       %&=&[\omega(x, y), u]_\h=[\varrho(x), \varrho(y)]_{\Der(\h)}(u)-\varrho([x, y]_\g)u\\
        &=&[\tilde{\varrho}(x,K'),\tilde{\varrho}(y,K'')]_{\Der(\h)}(u)-\tilde{\varrho}([(x.K'),(y,K'')]_{\tilde{\g}})u\\
        %&=&[\tilde{\rho}(x, K')+\ad\tilde{\varepsilon}(x, K'),\tilde{\rho}(y, K'')+\ad\tilde{\varepsilon}(y, K'')]_{\Der(\h)}(u)-\tilde{\rho}([(x.K'),(y,K'')]_{\tilde{\g}})u\\
        %&&-\ad\tilde{\varepsilon}([(x.K'),(y,K'')]_{\tilde{\g}})u\\
        &=&\ad\Big(\tilde{\rho}(x,K')\tilde{\varepsilon}(y,K'')-\tilde{\rho}(y,K'')\tilde{\varepsilon}(x,K')-\tilde{\varepsilon}([(x.K'),(y,K'')]_{\tilde{\g}})+[\tilde{\varepsilon}(x,K'),\tilde{\varepsilon}(y,K'')]_{\h}\Big)u\\
&=&\Big(\ad\circ(\dM^{CE}_{\tilde{\rho}}\tilde{\varepsilon}+\frac{1}{2}[\tilde{\varepsilon},\tilde{\varepsilon}]_{\smile})((x,K'),(y,K'))\Big)u,
    \end{eqnarray*}
     which implies that $\tilde{\omega}((x, K'),(y,K''))-(\dM^{CE}_{\tilde{\rho}}\tilde{\varepsilon}+\frac{1}{2}[\tilde{\varepsilon},\tilde{\varepsilon}]_{\smile})((x,K'),(y,K''))\in\frkz(\h).$Furthermore, by \eqref{dsmile} and $[[\tilde{\varepsilon},\tilde{\varepsilon}]_{\smile},\tilde{\varepsilon}]_{\smile}=0$, we obtain
    \begin{equation*}
        [\tilde{\varepsilon},\tilde{\omega}]_{\smile}=\big[\tilde{\varepsilon}, \dM^{CE}_{\tilde{\rho}}\tilde{\varepsilon}+\frac{1}{2}[\tilde{\varepsilon},\tilde{\varepsilon}]_{\smile}\big]_{\smile}=[\tilde{\varepsilon},\dM^{CE}_{\tilde{\rho}}\tilde{\varepsilon}]_{\smile}=-\frac{1}{2}\dM^{CE}_{\tilde{\rho}}[\tilde{\varepsilon},\tilde{\varepsilon}]_{\smile}.
    \end{equation*}
    Hence, $\dM^{F}_{\tilde{\varrho}}\tilde{\omega}=\dM^{CE}_{\tilde{\rho}}\tilde{\omega}+[\tilde{\varepsilon},\tilde{\omega}]_{\smile}=\dM^{CE}_{\tilde{\rho}}\tilde{\omega}-\frac{1}{2}\dM^{CE}_{\tilde{\rho}}[\tilde{\varepsilon},\tilde{\varepsilon}]_{\smile}=\dM^{CE}_{\tilde{\rho}}(\tilde{\omega}-\frac{1}{2}[\tilde{\varepsilon},\tilde{\varepsilon}]_{\smile})$, which means that $\dM^{F}_{\tilde{\varrho}}\tilde{\omega}$ is a coboundary. We have $\dM^{CE}_{\tilde{\rho}}(\dM^{F}_{\tilde{\varrho}}\tilde{\omega})=0$, that is, $\dM^{F}_{\tilde{\varrho}}\tilde{\omega}\in Z^3(\tilde{\g},\frkz(\h))$.
   
    By equations (\ref{deltasmile}) and (\ref{dcomdelta}), we have
    \begin{eqnarray*}
        \dM^{CE}_{\tilde{\rho}}\tilde{\chi}+\delta(\tilde{\omega}-\frac{1}{2}[\tilde{\varepsilon}, \tilde{\varepsilon}]_{\smile})&=&\dM^{CE}_{\tilde{\rho}}\tilde{\chi}+\delta(\tilde{\omega})-[\tilde{\varepsilon}, \delta(\tilde{\varepsilon})]_{\smile}=\dM^{CE}_{\tilde{\rho}}\tilde{\chi}+\delta(\tilde{\omega})+[\tilde{\varepsilon}, \tilde{\chi}]_{\smile}=\dM^{F}_{\tilde{\varrho}}\tilde{\chi}+\delta(\tilde{\omega}),
    \end{eqnarray*}
    which means that $\partial_{\tilde{\rho}}(\tilde{\omega}-\frac{1}{2}[\tilde{\varepsilon}, \tilde{\varepsilon}]_{\smile}, \tilde{\chi})=(\dM^{F}_{\tilde{\varrho}}\tilde{\omega}, \dM^{F}_{\tilde{\varrho}}\tilde{\chi}+\delta(\tilde{\omega}))$. Thus, $(\dM^{F}_{\tilde{\varrho}}\tilde{\omega}, \dM^{F}_{\tilde{\varrho}}\tilde{\chi}+\delta(\tilde{\omega}))$ is coboundary, which implies that $(\dM^{F}_{\tilde{\varrho}}\tilde{\omega}, \dM^{F}_{\tilde{\varrho}}\tilde{\chi}+\delta(\tilde{\omega}))\in Z^3_{\LD}(\tilde{\g},\tilde{D};\frkz(\h),K)_\frkk$. Thus $(\dM^F_{\varrho}\omega,\dM^F_{\varrho}\chi+\delta(\omega))\in Z^3_{\LD}(\g,D;\frkz(\h),K)_{\frkk}$.
\end{proof}

\vspace{2mm}
\noindent
{\bf Acknowledgements. } This research is supported by NSFC (12401076).

%\bibliographystyle{amsplain}
%\bibliography{referencelieder}

 \end{document}